\newtheorem{defi}{Definition}[section] 
\newtheorem{theo}[defi]{Theorem}
\newtheorem{lemme}[defi]{Lemma}
\newtheorem{lemma}[defi]{Lemma}
\newtheorem{prop}[defi]{Proposition}
\newtheorem{conj}[defi]{Question}
\theoremstyle{remark} \newtheorem*{rem}{Remark}
\newcommand{\T}{\mathbb{T}} 
\newcommand{\Ss}{\mathbb{S}}
\newcommand{\proj}{\mathbb{P}}
\newcommand{\R}{\mathbb{R}}
\newcommand{\Z}{\mathbb{Z}}
\newcommand{\N}{\mathbb{N}} 
\newcommand{\h}{\mathbb{H}}
\newcommand{\e}{\varepsilon}
\newcommand{\p}{\varphi}
\newcommand{\Log}{\rm{Log}}
\newcommand{\Diff}{\rm{Diff}}
\newcommand{\Homeo}{\rm{Homeo}}
\newcommand{\Stab}{\textrm{Stab}}
\newcommand{\PSL}{\rm{PSL}}
\newcommand{\intoo}[2]{\mathopen{]}#1\,,#2\mathclose{[}}
\newcommand{\intff}[2]{\mathopen{[}#1\,,#2\mathclose{]}}
\newcommand{\intof}[2]{\mathopen{]}#1\,,#2\mathclose{]}}
\newcommand{\intfo}[2]{\mathopen{[}#1\,,#2\mathclose{[}}
\renewcommand{\tilde}{\widetilde}
\title[Area preserving circle diffeomorphisms]{Differentiable conjugacy for groups of area preserving circle diffeomorphisms} 
\author{Daniel Monclair}
\date{\today}
\thanks{Partially supported by  ANR project GR-Analysis-Geometry (ANR-2011-BS01-003-02)}
\begin{document}

\maketitle

\begin{abstract} We study groups of circle diffeomorphisms whose action on the cylinder $\mathcal C=\Ss^1\times \Ss^1\setminus \Delta$ preserves a volume form. We first show that such a group is topologically conjugate to a subgroup of $\PSL(2,\R)$, then discuss the existence of a differentiable  conjugacy.
\end{abstract}

    \setcounter{tocdepth}{1}
\tableofcontents

\section{Introduction}

\indent A well known theorem proved by Gabai and Casson-Jungreis states that a group action on the circle $\rho:\Gamma \to \Homeo(\Ss^1)$ is conjugate in $\Homeo(\Ss^1)$ to the action of a subgroup of $\PSL(2,\R)$ (where the action  is the projective action on $\Ss^1=\R\proj^1$) if and only if  the induced action on the space of distinct triples of points is proper. This condition is known as the convergence property. However, for differentiable actions $\rho : \Gamma \to \Diff(\Ss^1)$, the conjugacy is not necessarily differentiable.\\
\indent Our goal is to study the differentiability of such a conjugacy via the  diagonal  actions on product spaces. The diagonal action of $\PSL(2,\R)$ on the space of distinct couples of points $\mathcal C=\Ss^1\times \Ss^1\setminus \Delta$ preserves the volume form $\frac{4dx\wedge dy}{(x-y)^2}$. The existence of an invariant volume form on $\mathcal C$ is a notion that is invariant under conjugacy in $\Diff(\Ss^1)$. Given a representation $\rho:\Gamma \to \Diff(\Ss^1)$, we will study the link between the existence of an invariant volume form on $\mathcal C$ and conjugacy with the action of a subgroup of $\PSL(2,\R)$. \\
\indent The first result of this paper states that this condition is stronger than the convergence property:
\begin{theo} \label{topologic} Assume that $\rho: \Gamma \to \Homeo(\Ss^1)$ preserves a continuous volume form on $\mathcal C$. Then $\rho$ is conjugate in $\Homeo(\Ss^1)$ to a representation in  $\PSL(2,\R)$. \end{theo}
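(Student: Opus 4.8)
The plan is to produce the conjugacy directly from the invariant volume, using it to build a $\Gamma$-invariant cross-ratio on $\Ss^1$; this is even stronger than the convergence property and so yields the conclusion (and is consistent with the Gabai--Casson--Jungreis criterion recalled above). Write $\mu$ for the invariant measure and $(a,b)$ for the positively oriented open arc from $a$ to $b$. For four points in cyclic order $a,b,c,d$ I set
\[
\beta(a,b,c,d)=\mu\big((a,b)\times(c,d)\big),
\]
the $\mu$-mass of the box of pairs whose first coordinate lies in $(a,b)$ and second in the disjoint arc $(c,d)$. This box has compact closure in $\mathcal C$, so $\beta$ is finite. The quantity is natural because in the model case $\mu=\tfrac{4\,dx\wedge dy}{(x-y)^2}$ a direct computation gives $\beta(a,b,c,d)=4\log\big|\tfrac{(b-d)(a-c)}{(b-c)(a-d)}\big|$, i.e. four times the logarithm of the usual projective cross-ratio.

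First I would record the formal properties of $\beta$. Restricting to the orientation-preserving subgroup (of index at most two), invariance $\beta(\gamma a,\gamma b,\gamma c,\gamma d)=\beta(a,b,c,d)$ is immediate, since such a $\gamma$ maps the box $(a,b)\times(c,d)$ onto $(\gamma a,\gamma b)\times(\gamma c,\gamma d)$ and $\mu$ is preserved. Finite additivity of $\mu$ over disjoint boxes gives the cocycle relations $\beta(a,b,c,d)+\beta(b,e,c,d)=\beta(a,e,c,d)$ and its analogue in the second arc; positivity and strict monotonicity in the endpoints follow from $\mu$ being a continuous positive density; and continuity follows from $\mu$ having no atoms. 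Thus $\beta$ is a continuous, strictly monotone, $\Gamma$-invariant cross-ratio.

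The heart of the argument is to straighten $\beta$ into the standard one. The idea is that $\beta$ defines a topological Möbius structure on $\Ss^1$, and I would construct a homeomorphism $h\colon\Ss^1\to\R\proj^1$ conjugating $\beta$ to the standard logarithmic projective cross-ratio: normalising on three points and using the additive cocycle together with strict monotonicity produces a well-defined, continuous, monotone coordinate, which is exactly $h$. Once $\beta=h^*(\text{standard})$, every orientation-preserving $\gamma$, conjugated by $h$, preserves the standard cross-ratio of all quadruples, hence is projective; therefore $h\rho(\Gamma)h^{-1}\subset\PSL(2,\R)$, which also forces the convergence property.

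I expect the main obstacle to be the behaviour of $\mu$ near the diagonal $\Delta$. Straightening $\beta$ to a genuine cross-ratio on all of $\R\proj^1$ requires that $\beta(a,b,c,d)\to+\infty$ as the arcs $(a,b)$ and $(c,d)$ degenerate to a common endpoint, i.e. that $\mu$ has infinite mass in every neighbourhood of $\Delta$; this holds in the model but is not automatic (for a finite group any invariant measure works). I would therefore first dispose of the elementary cases—$\rho(\Gamma)$ finite or fixing at most two points, where the conclusion is direct—and in the remaining case use the richness of the group together with invariance of $\mu$ to force the diagonal blow-up, after which the straightening goes through. Ensuring that all cross-ratio manipulations remain valid for a merely continuous density and a merely topological action is the other point requiring care.
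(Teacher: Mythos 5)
Your route is genuinely different from the paper's: you are essentially re-deriving Navas's theorem (quoted in Section 2) by straightening the invariant cross-ratio $\beta(a,b,c,d)=\mu((a,b)\times(c,d))$, whereas the paper never touches the behaviour of $\mu$ near the diagonal. The paper instead builds from $\omega$ a $\Gamma_\omega$-invariant Riemannian metric on the space of distinct triples $\Theta_3(\Ss^1)$, deduces properness of the action on triples from Ascoli, and invokes the Gabai--Casson--Jungreis convergence group theorem. That detour through the convergence property is precisely what lets the paper avoid the point where your argument breaks.

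The break is the one you flag yourself, and your proposed repair does not work. Straightening $\beta$ into the logarithmic projective cross-ratio requires $\beta(a,b,c,d)\to+\infty$ as the arcs degenerate to a common endpoint (this is exactly Navas's hypothesis $\mu([a,b[\times]b,c])=\infty$), and a continuous volume form on $\mathcal C$ need not satisfy it: $\omega=dx\wedge dy$ is a continuous invariant volume form for any irrational rotation, and it has finite mass near $\Delta$. An irrational rotation group is neither finite nor does it fix any point, so it falls squarely into the ``remaining case'' where you claim that ``the richness of the group together with invariance of $\mu$'' forces the blow-up --- and there the blow-up simply fails. Without it, the normalised coordinate $b\mapsto\beta(a_0,b,c,d)$ stays bounded as $b$ approaches $c$, so it cannot be a homeomorphism onto $\R\proj^1$ conjugating $\beta$ to the standard (unbounded) cross-ratio, and the whole straightening collapses. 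One can force the blow-up at a point $(N,N)$ of the diagonal when the group contains an element with a two-sided attracting fixed point $N$ (compare $\mu(B)$ with $\mu(f(B))\subsetneq B$ for $B=[a,N)\times(N,c]$), but establishing the existence and distribution of such points, and handling parabolic-type and fixed-point-free dynamics, essentially requires already knowing the structure of the action --- which is what the theorem is supposed to deliver. So as written the argument is circular at its key step; either supply an independent proof of the diagonal blow-up in all non-rotation cases, or switch to an argument (like the paper's invariant metric on triples) that does not need it.
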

The proof consists in remarking that preserving a volume form on pairs of points implies preserving a distance on triples of points. 

\subsection{Fuchsian groups and generalizations}

 We identify $\PSL(2,\R)$ and its image in $\Diff(\Ss^1)$ given by the projective action on $\Ss^1\approx \R\proj^1$, and  call a group action on the circle  $\rho: \Gamma \to \Homeo(\Ss^1)$  \textbf{Fuchsian} if $\rho(\Gamma)\subset \PSL(2,\R)$ (note that we do not ask for $\rho(\Gamma)$ to be discrete, even though it will be the case in most of our examples).\\
 \indent We will say that $\rho : \Gamma \to \Homeo(\Ss^1)$ is \textbf{topologically Fuchsian} if there is $h\in \Homeo(\Ss^1)$ such that  $h^{-1} \rho(\Gamma) h \subset \PSL(2,\R)$. 
 
 \subsubsection{Differential conjugacy}  When considering  actions by diffeomorphisms, the natural notion of conjugacy is the conjugacy in the group $\Diff(\Ss^1)$. We will say that $\rho:\Gamma\to \Diff(\Ss^1)$ is \textbf{differentially Fuchsian} if  there is $h\in \Diff(\Ss^1)$ such that $h^{-1} \rho(\Gamma) h \subset \PSL(2,\R)$ (in the absence of precision, $\Diff(\Ss^1)$ denotes the group of $C^\infty$ diffeomorphisms).\\
  \indent There is no general condition under which  a topologically Fuchsian representation $\rho :\Gamma \to \Diff(\Ss^1)$ is automatically differentially Fuchsian. However, there are  two known results assuring the existence of a differential conjugacy under specific hypotheses: a theorem of Herman  on diffeomorphisms conjugate to irrational rotations, and a theorem of Ghys  on representations of surface groups.
  
\subsubsection{Area-preserving groups} We will say that an action $\rho :\Gamma \to \Diff(\Ss^1)$ is \textbf{area-preserving} if the diagonal action on $\mathcal C=\Ss^1\times\Ss^1\setminus \Delta$ preserves a smooth volume form.\\
\indent Theorem \ref{topologic} states that an area-preserving representation is topologically Fuchsian. If $h\in \Diff(\Ss^1)$ and $\rho :\Gamma \to \Diff(\Ss^1)$ preserves the volume form $\omega$ on $\mathcal C$, then $h^{-1}\rho h$ preserves the volume form $h^\star\omega$. If $h$ is only continuous, then $h^\star\omega$ is only a measure, it is not always absolutely continuous with respect to the Lebesgue measure.\\
\indent Since the action of $\PSL(2,\R)$ preserves a volume form, all differentially Fuchsian representations are area-preserving.\\
\indent We will show that under some specific hypotheses, it is an equivalence.

\begin{theo} \label{area_equiv_diff} Assume that $\rho :\Gamma \to \Diff(\Ss^1)$ satisfy (at least) one of the following conditions:
\begin{itemize} \item There is a dense orbit on $\Ss^1$.
\item $\rho(\Gamma)\subset \Diff^\omega(\Ss^1)$ and $\Gamma$ has no finite orbit on $\Ss^1$.
\item $\Gamma = \Z$, $\rho(1)\in \Diff^\omega(\Ss^1)$  and $\rho(1)$   has exactly two fixed points.
\item $\Gamma=\Z$ and $\rho(1)$ has no fixed point on $\Ss^1$.
\end{itemize}
Then $\rho$ is area-preserving if and only if it is differentially Fuchsian.
\end{theo}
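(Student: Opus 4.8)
The converse is already observed in the text, so I would focus on the hard implication: an area-preserving $\rho$ satisfying one of the four conditions is differentially Fuchsian. The unifying device I would use is to promote the invariant volume form to an invariant \emph{Lorentzian metric}. On $\mathcal C=\Ss^1\times\Ss^1\setminus\Delta$ the two coordinate foliations $\{x=\mathrm{const}\}$ and $\{y=\mathrm{const}\}$ are smooth, transverse, and preserved leafwise by every diagonal map, so declaring them to be the two null directions and fixing the normalization by $\omega$ produces a Lorentzian metric $\mathbf g_\omega=2f\,dx\,dy$ (writing $\omega=f\,dx\wedge dy$) which is $\rho(\Gamma)$-invariant. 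Thus $\rho(\Gamma)$ sits inside $\Isom(\mathbf g_\omega)$, a finite-dimensional Lie group, and the model form $\frac{4\,dx\wedge dy}{(x-y)^2}$ corresponds to the de Sitter metric $\mathbf g_0$ of constant curvature $+1$. By Theorem~\ref{topologic} there is in addition a topological conjugacy $h$ with $h^{-1}\rho(\Gamma)h\subset\PSL(2,\R)$, which I would keep at hand to control the global structure of $\mathcal C$ and to identify the target group.

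For the first two hypotheses the plan is to show that $\mathbf g_\omega$ has constant curvature. The Gaussian curvature $K_\omega$ is a smooth $\rho(\Gamma)$-invariant function, hence constant on every orbit closure. If $\rho(\Gamma)$ has a dense orbit on $\Ss^1$ it is non-elementary with limit set all of $\Ss^1$, so its action on $\mathcal C$ (the space of geodesics) is topologically transitive; a single dense orbit plus continuity then forces $K_\omega$ to be constant. In the analytic case with no finite orbit I would derive the same conclusion by propagating constancy off the limit set by analytic continuation. Once $K_\omega\equiv+1$, the Liouville equation $\partial_x\partial_y\log f=-f$ (up to a positive constant) has only the solutions $f=\frac{p'(x)\,q'(y)}{(p(x)-q(y))^2}$, i.e. $\mathbf g_\omega=\Phi^\star\mathbf g_0$ for the smooth product map $\Phi=(p,q)$, a local normal form that a developing-map argument (organized by $h$) globalizes. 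Invariance of $\mathbf g_\omega$ then says that each $\Phi\rho(\gamma)\Phi^{-1}$ preserves $\mathbf g_0$ and its null foliations, hence is a diagonal Möbius map, so $p\rho(\gamma)p^{-1}=q\rho(\gamma)q^{-1}\in\PSL(2,\R)$. Therefore $q^{-1}p$ centralizes the non-elementary group $\rho(\Gamma)$ and must be the identity, whence $p=q$ is the desired smooth conjugacy.

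The two elementary hypotheses need more hands-on arguments. When $\Gamma=\Z$ and $g:=\rho(1)$ has no fixed point, $g$ is topologically a rotation, its action on $\Ss^1$ is equicontinuous, and the closure of $\langle g\rangle$ inside the Lie group $\Isom(\mathbf g_\omega)$ is a compact subgroup; being a limit of diagonal maps it consists of diagonal diffeomorphisms, so it comes from a smooth circle action on $\Ss^1$ containing $g$ (after averaging a metric, if $g$ is of finite order). Since a smooth effective circle action on $\Ss^1$ is smoothly conjugate to the rotation group, $g$ is smoothly conjugate to an elliptic element of $\PSL(2,\R)$. When $\Gamma=\Z$, $g$ is analytic and has exactly two fixed points $a,b$, the pair $(a,b)$ is a fixed point of the diagonal action lying in $\mathcal C$; since $\omega$ is nonvanishing there, the linearization $\diag(g'(a),g'(b))$ has determinant one, i.e. $g'(a)g'(b)=1$, which is exactly the reciprocal-multiplier relation of a hyperbolic Möbius map. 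Koenigs' theorem linearizes $g$ analytically near each fixed point, and I would glue the two linearizing coordinates into a global conjugacy to the corresponding hyperbolic element of $\PSL(2,\R)$ by transporting along the dynamics on the two invariant arcs.

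The easy direction and the construction of $\mathbf g_\omega$ are routine; I expect the real difficulty to lie in the cases without a dense orbit on $\mathcal C$. In the analytic non-elementary case one must justify constancy of $K_\omega$ even when the minimal set is exceptional, which requires a genuine argument rather than mere continuity. In the hyperbolic case the delicate point is the global gluing: Koenigs linearization is purely local, and ensuring that the conjugacy built from the dynamics is smooth up to and including both fixed points---where the jets from the two sides must agree---is where the relation $g'(a)g'(b)=1$ and analyticity have to be combined carefully. Finally, in all constant-curvature cases the passage from the local Liouville normal form to a global product isometry $\Phi=(p,q)$, together with the verification that the constant curvature is genuinely positive, is the step where the topological conjugacy $h$ of Theorem~\ref{topologic} must be brought in to pin down the global geometry.
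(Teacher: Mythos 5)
Your overall framework --- pass to the Lorentz metric $\omega(x,y)\,dx\,dy$, study its curvature, and split into the four cases --- is indeed the paper's, and the easy direction together with the derivation of $g'(a)g'(b)=1$ from the fixed point $(a,b)\in\mathcal C$ are fine. But two of your cases rest on steps that fail. For the dense-orbit case you assert that a dense orbit on $\Ss^1$ makes the diagonal action on $\mathcal C$ topologically transitive, hence the curvature constant. This is false when every element is elliptic: an irrational rotation (or a minimal group conjugate into the rotation group) has all orbits dense on $\Ss^1$, yet the diagonal action on $\mathcal C$ preserves every invariant graph, and $F(y-x)\,dx\wedge dy$ is $R_\alpha$-invariant for any positive $F$, with non-constant curvature. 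The paper therefore splits off the all-elliptic case and treats it by restricting the Lorentz metric to the graph of a group element, producing the invariant Riemannian metric $\omega(x,\rho(\gamma_0)x)\rho(\gamma_0)'(x)\,dx^2$ on $\Ss^1$; this same device proves Proposition \ref{elliptic} directly and replaces your unjustified claim that the closure of $\langle g\rangle$ in the isometry group is compact. When a hyperbolic element does exist, constancy of the curvature comes not from transitivity on $\mathcal C$ but from Lemma \ref{curvature_limit_set} (orbits of the axes accumulate on $(N,S)$). Note also that once the curvature is constant the paper deliberately avoids your developing-map globalization, since Lorentzian constant-curvature structures need not be complete; it instead extracts an invariant projective structure from the isotropic geodesics and invokes Ghys's Lemma 5.1.

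The more serious gap is the analytic hyperbolic case. Gluing the two Koenigs linearizations along the invariant arcs is obstructed by the Mather invariant, a genuine conjugacy invariant that is not controlled by $g'(a)g'(b)=1$ together with analyticity of $g$: by Proposition \ref{hyperbolic}, the multiplier relation is exactly equivalent to preserving \emph{some} smooth volume form, and the paper exhibits such diffeomorphisms that are not differentiably conjugate to Möbius maps. Since your argument uses the volume form only to obtain the multiplier relation, it would prove far too much. The paper's proof of Theorem \ref{analytic_hyperbolic} uses the analyticity of the invariant volume form globally: near $(N,S)$ the invariance equation forces $\omega=e^{f(xy)}dx\wedge dy$ in linearizing coordinates, so $\omega$ is invariant under a local one-parameter group; this local Killing field of the analytic Lorentz metric extends to a simply connected invariant open set by Amores's theorem, whence $g$ is the time-one map of an analytic flow and its Mather invariant vanishes. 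You also never rule out $g'(a)=1$ (needed even to start Koenigs), which the paper does by an ODE uniqueness argument in Lemma \ref{necessary}. The analytic non-elementary case and the remaining structure of your proposal do follow the paper, modulo these repairs.
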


The proof is obtained by combining Proposition \ref{elliptic}, Theorem \ref{analytic_hyperbolic}, Theorem \ref{analytic_non_elementary} and Theorem \ref{transitive}. We will also see that this equivalence is not always true.

\subsubsection{$L$-Differential conjugacy}
A  group $\Gamma\subset \Homeo(\Ss^1)$ with no finite orbit has a unique minimal closed invariant set $L_{\Gamma}\subset\Ss^1$, called the limit set. It is  either the whole circle or a Cantor set. In the latter case, we call $L_\Gamma$ an exceptional minimal  set. Examples of such groups are given by Schottky groups (free groups in $\PSL(2,\R)$ generated by appropriately chosen hyperbolic elements). In this case, we will show that area-preserving actions are not necessarily differentially Fuchsian.\\
\indent However, the examples that we will give share a property with minimal actions (i.e. all orbits on $\Ss^1$ are dense): the conjugacy is always differentiable along the limit set.
\begin{defi} We  say that two representations $\rho_1,\rho_2:\Gamma\to\Diff(\Ss^1)$ with no finite orbits are $L$-differentially conjugate if there is $h\in \Homeo(\Ss^1)$ such that $h^{-1} \rho_2 h =\rho_1$ and such that there is $\p\in \Diff(\Ss^1)$ with the same  restriction  $\p_{/L_{\rho_1(\Gamma)}} = h_{/L_{\rho_1(\Gamma)}} $.\\ We say that $\rho:\Gamma\to \Diff(\Ss^1)$ is $L$-differentially Fuchsian if it is $L$-differentially conjugate to a Fuchsian action. \end{defi}
 Knowing that $L$-differentially Fuchsian actions are not necessarily differentially Fuchsian, the following statement shows that area-preserving actions are not necessarily differentially Fuchsian.
 
 \begin{theo}\label{area_preserving} If $\rho:\Gamma \to \Diff(\Ss^1)$ is  $L$-differentially conjugate to a convex cocompact representation in $\PSL(2,\R)$, then $\rho$ is area-preserving.\end{theo}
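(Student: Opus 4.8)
The plan is to produce a smooth $\rho$-invariant volume form on $\mathcal C$ by constructing a diffeomorphism $H\colon\mathcal C\to\mathcal C$ that intertwines the two diagonal actions, $H\circ(\rho(\gamma)\times\rho(\gamma)) = (\rho_0(\gamma)\times\rho_0(\gamma))\circ H$, where $\rho_0 = h\rho h^{-1}$ is the convex cocompact Fuchsian model and $h$ is the conjugating homeomorphism; then $\omega := H^\star\omega_0$, with $\omega_0 = \frac{4\,dx\wedge dy}{(x-y)^2}$, is the desired form, since $(\rho\times\rho)^\star H^\star\omega_0 = H^\star(\rho_0\times\rho_0)^\star\omega_0 = H^\star\omega_0$. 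Writing $\p\in\Diff(\Ss^1)$ for the diffeomorphism with $\p_{/L} = h_{/L}$, where $L = L_{\rho(\Gamma)}$, the natural candidate for $H$ is the equivariant homeomorphism $h\times h$, which I must upgrade to a diffeomorphism without destroying equivariance. The first observation is that $\p\times\p$ is smooth and already \emph{exactly} equivariant along $L\times L$: since $L$ is perfect, two $C^1$ maps agreeing on $L$ have the same $1$-jet at every point of $L$, so from $h\rho(\gamma) = \rho_0(\gamma)h$ on $L$ one gets that $\p\circ\rho(\gamma)$ and $\rho_0(\gamma)\circ\p$ have equal $1$-jets along $L$; hence $(\p\times\p)^\star\omega_0$ is $\rho$-invariant at every point of $(L\times L)\cap\mathcal C$.

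Next I would use the decomposition $\mathcal C = \mathcal C_{LL}\sqcup W$, where $\mathcal C_{LL} = (L\times L)\cap\mathcal C$ and $W = \{(x,y)\in\mathcal C : x\notin L \text{ or } y\notin L\}$, with $\Omega = \Ss^1\setminus L$ the domain of discontinuity. Here convex cocompactness enters twice. On one hand, the action of $\rho(\Gamma)$ on $\Omega$ is properly discontinuous, so, since any compact subset of the open set $W$ is disjoint from the closed frontier $\mathcal C_{LL}$ and from $\Delta$, the diagonal action on $W$ is properly discontinuous; moreover $\Omega$ is a union of finitely many $\Gamma$-orbits of gaps $I_k=(a_k,b_k)$, each stabilized by a cyclic group $\langle g_k\rangle$ with $\rho(g_k)$ having exactly the two fixed points $a_k,b_k\in L$, at which, again by the $1$-jet argument, its derivatives coincide with those of the hyperbolic element $\rho_0(g_k)$. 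On the other hand, the non-wandering part $\mathcal C_{LL}$ is cocompact: $(L\times L\setminus\Delta)/\rho(\Gamma)$ is compact, being the image under $h$ of the unit tangent bundle of the compact convex core of $\rho_0$.

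I would then build $H$ on $W$ by smoothing $h\times h$ equivariantly. Since the action on $W$ is properly discontinuous, $h\times h$ descends to a homeomorphism between the open quotient manifolds $W/\rho(\Gamma)$ and $h(W)/\rho_0(\Gamma)$, which I can approximate by a diffeomorphism, done relative to a neighborhood of the ends coming from $\mathcal C_{LL}$ so as to keep $\p\times\p$ there, and then lift to a $\rho$-equivariant diffeomorphism $H$ of $W$. The finiteness of the gap orbits and the cyclic stabilizers with matching hyperbolic derivatives make this a finite, funnel-by-funnel model computation. Setting $H = \p\times\p$ on $\mathcal C_{LL}$ and using this smoothing on $W$, arranged to agree with $\p\times\p$ near the frontier, yields a global bijection $H$ intertwining the two actions, and $\omega = H^\star\omega_0$ is the claimed invariant form.

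The main obstacle is the $C^\infty$ regularity of $H$ across the frontier $\mathcal C_{LL}$, which is a Cantor-type set rather than a smooth submanifold, so that the equivariant smoothing performed in the funnels must match the smooth map $\p\times\p$ to all orders as one approaches $\mathcal C_{LL}$. The leverage is exactly the interplay established above: the frontier is governed by the limit set, where $h$ is the restriction of a diffeomorphism whose $1$-jets already match the Fuchsian dynamics; combined with the compactness of $\mathcal C_{LL}/\rho(\Gamma)$ and the smoothness of the finitely many generators, this should let one propagate a uniform jet estimate over a fundamental domain to all of $W$ by equivariance, forcing the corrected map to glue smoothly to $\p\times\p$. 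Checking this regularity, and that $H$ stays a local diffeomorphism up to the frontier so that $\omega$ is nondegenerate, is the technical heart of the argument.
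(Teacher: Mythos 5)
The decisive step of your construction --- descending $h\times h$ to the quotient $W/\rho(\Gamma)$, smoothing it there relative to the ends, and lifting back --- is not available, because the diagonal action on $W=\{(x,y)\in\mathcal C : x\notin L \text{ or } y\notin L\}$ (with $L=L_{\rho(\Gamma)}$) is \emph{not} properly discontinuous. It is properly discontinuous on each of $(\Ss^1\setminus L)\times\Ss^1\setminus\Delta$ and $\Ss^1\times(\Ss^1\setminus L)\setminus\Delta$ separately, since a compact subset of either has compact projection to $\Ss^1\setminus L$ in one factor, but not on their union $W$. Concretely, let $g\in\Gamma$ be such that $\rho(g)$ is hyperbolic with attracting and repelling fixed points $N,S\in L$, and pick $x,y'\in\Ss^1\setminus L$. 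Put $p_n=(x,\rho(g^{-n})(y'))$; then $p_n\to(x,S)\in W$ while $\rho(g^n)(p_n)=(\rho(g^n)(x),y')\to(N,y')\in W$, so the compact set $K=\{p_n\}_n\cup\{(x,S)\}\cup\{\rho(g^n)(p_n)\}_n\cup\{(N,y')\}\subset W$ meets infinitely many of its translates. Hence $W/\rho(\Gamma)$ is not a Hausdorff manifold, there is no quotient in which to perform an equivariant smoothing, and the ``finite, funnel-by-funnel'' reduction collapses. (Relatedly, $(L\times L\setminus\Delta)/\rho(\Gamma)$ is the orbit space of the non-wandering set of the geodesic flow by the flow itself --- a compact but badly non-Hausdorff space --- not the unit tangent bundle of the convex core; the cocompactness you invoke does not give you what you want.)

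Beyond this, the step you defer as ``the technical heart'' is where essentially all of the content of the theorem lies, and your framework supplies no tools for it. Your jet argument is correct as far as it goes: it shows that $(\p\times\p)^\star\omega_0$ is $\rho$-invariant at points of $(L\times L)\setminus\Delta$. But the paper must first upgrade this to a smooth form invariant on the much larger set $(L\times\Ss^1\cup\Ss^1\times L)\setminus\Delta$, and this already requires a genuinely dynamical argument --- a hyperbolic flow on a $3$-manifold and exponential contraction along stable and unstable manifolds (Lemma \ref{invariant_stable_manifold} and Lemma \ref{back_to_C}) --- not a propagation of jets over a fundamental domain. The subsequent extension to all of $\mathcal C$ and its regularity at points accumulating on the limit set occupy Lemma \ref{vertical}, Proposition \ref{continuous} and Proposition \ref{smooth}, use the convergence property in an essential way, and directly yield only a $C^2$ form. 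Note finally that you are aiming at something strictly stronger than what is needed: a global diffeomorphism $H$ of $\mathcal C$ intertwining the two diagonal actions would give much more than an invariant volume form, no evidence is offered that such an $H$ exists, and the paper never produces one; the theorem asks only for an invariant volume form, and constructing that object with the required regularity is precisely the difficulty your proposal leaves unresolved.
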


\subsubsection{Spectral conditions}
Finally, a weaker generalization of Fuchsian actions consists in looking only at the derivatives at fixed points. A hyperbolic element $\gamma\in\PSL(2,\R)$ has  exactly two fixed points $N,S\in \Ss^1$. The derivatives satisfy $\gamma'(N)\gamma'(S)=1$ and $\gamma'(N)\ne 1$.

\begin{defi} We say that $\rho:\Gamma\to\Diff(\Ss^1)$ is spectrally Möbius-like if non trivial elements have at most two fixed points, and if elements $\gamma$ with two fixed points $N,S$ satisfy $\rho(\gamma)'(N)\rho(\gamma)'(S)=1$ and $\rho(\gamma)'(N)\ne 1$. \end{defi}

This is a condition that concerns individual elements of the group rather than the group structure (hence the terminology, in reference to Möbius-like actions, i.e. such that every element is topologically conjugate to an element of $\PSL(2,\R)$). Differentially Fuchsian and $L$-differentially Fuchsian actions are spectrally Möbius-like. It is also quite straightforward to see that area-preserving actions are spectrally Möbius-like (see Proposition \ref{hyperbolic}).\\
\indent One can also define the spectrum $S(\rho):\Gamma \to \R^2$ as the data of the derivatives at fixed points for all elements of $\Gamma$.

\subsection{The case of a single diffeomorphism}

\indent The problem of knowing when a diffeomorphism that is topologically conjugate to a rotation is differentially conjugate to this rotation has been deeply studied. A well known theorem of Herman (\cite{Herman}) states that a differentiable conjugacy always exists provided the diffeomorphism has its  rotation number in a certain set of full Lebesgue measure (more precisely, if it satisfies a Diophantine condition, see \cite{Y84} for an exact description), but there are smooth examples where a differentiable conjugacy does not exist. In the area-preserving case, we do not have different behaviors:

\begin{prop} \label{elliptic} Let $f\in \emph{\Diff}(\Ss^1)$ be a fixed point free diffeomorphism If $f$ is area-preserving, then it is differentially conjugate to a rotation. \end{prop}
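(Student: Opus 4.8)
The plan is to first reduce to the case of an irrational rotation number, and then to upgrade the topological conjugacy furnished by Theorem \ref{topologic} to a smooth one by extracting a \emph{smooth positive invariant density} on $\Ss^1$ from the invariant volume form. Applying Theorem \ref{topologic} to $\Gamma=\Z$ gives that $f$ is topologically conjugate to an element of $\PSL(2,\R)$; since $f$ has no fixed point that element is elliptic, so $f$ is topologically conjugate to a rotation $R_\alpha$. If $\alpha$ is rational then $f^q=\mathrm{id}$ for some $q$, and averaging any Riemannian metric over the finite group $\langle f\rangle$ yields a smooth invariant density, hence a smooth conjugacy to a rotation (here the area-preserving hypothesis is not even used). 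So I would assume $\alpha$ irrational; then $f$ is uniquely ergodic with a unique invariant probability measure $\mu$, and the canonical conjugacy $h(x)=\mu([x_0,x])$ satisfies $hfh^{-1}=R_\alpha$. Since a diffeomorphism preserving Lebesgue measure with irrational rotation number \emph{is} that rotation, it suffices to prove that $\mu$ has a smooth positive density, for then $h\in\Diff(\Ss^1)$.

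To produce such a density I would exploit the behaviour of $\omega=\phi(x,y)\,dx\wedge dy$ (with $\phi>0$ smooth on $\mathcal C$) along the diagonal $\Delta$. Invariance reads $\phi(f(x),f(y))f'(x)f'(y)=\phi(x,y)$, and comparison with the model form $\frac{4\,dx\,dy}{(x-y)^2}$ suggests that $\phi$ has a second order pole along $\Delta$: matching the leading terms in the invariance relation shows that the leading coefficient of this pole is $f$-invariant, hence constant by minimality of $f$. Setting $\chi(x,y)=4\sin^2(\tfrac{x-y}{2})$, the canonical smooth function vanishing to order two on $\Delta$, the product $\phi\,\chi$ then extends across $\Delta$, and $\log(\phi\chi)$ solves on $\mathcal C$ a cohomological equation whose coboundary is the \emph{separated} cocycle $-\log f'(x)-\log f'(y)$ together with the explicit correction $\log\frac{\chi\circ(f\times f)}{\chi}$. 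Analysing the jet of $\phi\chi$ transverse to $\Delta$ produces an $f$-invariant $1$-density on $\Ss^1$, and by unique ergodicity this density is proportional to $\mu$, which is therefore smooth. An alternative packaging of the same symmetry: the closure $\overline{\langle f\rangle}$ in $\Homeo(\Ss^1)$ is a circle acting, through the conjugacy, as the full rotation group, each of its elements still preserves the measure $|\omega|$ (a closed condition under uniform convergence, since $|\omega|$ is finite on compact subsets of $\mathcal C$), and this enhanced symmetry pins down $\mu$.

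The hard part is the regularity transfer: converting the smoothness of $\omega$ on the two-dimensional space $\mathcal C$ into smoothness of the one-dimensional measure $\mu$. This is precisely where the usual Diophantine obstructions to smooth linearisation are circumvented, the invariant area form supplying the a priori regularity that a general smooth circle diffeomorphism lacks. Concretely, the work is to establish that $\phi$ has an \emph{exact} order-two pole along $\Delta$ with a smooth residual part, so that $\phi\chi$ is genuinely smooth (not merely continuous) up to the diagonal, and to check that the extracted invariant density is nowhere vanishing; positivity should follow from $\phi>0$, but the smoothness of the diagonal jet and the verification that the relevant coefficient is not the degenerate (constant) leading term are the delicate points. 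Once a smooth positive invariant density is secured, the distribution-function conjugacy of the first step completes the proof.
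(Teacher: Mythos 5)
Your overall scheme (topological conjugacy via Theorem \ref{topologic}, then an upgrade to a smooth conjugacy by producing a smooth positive $f$-invariant density on $\Ss^1$) is reasonable, but the mechanism you propose for producing that density fails. You want to read the density off the asymptotics of $\phi$ along the diagonal, on the premise that $\phi$ has an exact order-two pole there with a well-defined leading coefficient. Nothing in the hypotheses forces this: $\omega$ is only assumed to be a smooth volume form on the open set $\mathcal C=\Ss^1\times\Ss^1\setminus\Delta$, with no prescribed behaviour at $\Delta$. Already for $f=R_\alpha$ with $\alpha$ irrational, the continuous invariant volume forms are exactly those of the shape $\psi(x-y)\,dx\wedge dy$ with $\psi$ an arbitrary smooth positive function on $\Ss^1\setminus\{0\}$; taking $\psi\equiv 1$ gives the invariant form $dx\wedge dy$, which has no pole at all, and one can just as well choose $\psi$ with no limit at $0$. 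So the ``leading coefficient of the pole'' is in general undefined, $\phi\chi$ need not extend across $\Delta$, and the diagonal jet of $\omega$ carries no usable information. This is a genuine gap, and it cannot be repaired by working harder near the diagonal.

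The paper's proof goes in the opposite direction: it exploits the one place where the fixed-point-free hypothesis enters, namely that the graph of $f$ is a compact subset of $\mathcal C$ disjoint from $\Delta$. Restricting the Lorentzian metric $\omega(x,y)\,dx\,dy$ to this graph yields the Riemannian metric $\Vert h\Vert_x^2=\omega(x,f(x))\,f'(x)\,h^2$ on $\Ss^1$, and the invariance relation $\omega(f(x),f(y))f'(x)f'(y)=\omega(x,y)$ evaluated at $y=f(x)$ shows that this metric is $f$-invariant. Since every smooth Riemannian metric on $\Ss^1$ is, after rescaling, smoothly isometric to the Euclidean one, whose orientation-preserving isometries are the rotations, $f$ is smoothly conjugate to a rotation --- with no case distinction on the rotation number and no appeal to unique ergodicity or to Theorem \ref{topologic}. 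If you wish to salvage your strategy, the invariant density you are after is precisely $\omega(x,f(x))f'(x)\,dx^2$: it must be extracted from the graph of $f$, not from the diagonal.
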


This result does not extend to diffeomorphisms with fixed points: there are some area-preserving circle diffeomorphisms that are not differentially conjugate to an element of $\PSL(2,\R)$. The following result treats the case corresponding to hyperbolic elements of $\PSL(2,\R)$.

\begin{prop} \label{hyperbolic} Let $f\in\Diff(\Ss^1)$ have exactly two fixed points $N$ and $S$. It is area-preserving if and only if it is spectrally Möbius-like \end{prop}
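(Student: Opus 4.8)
The plan is to read the area-preserving condition as a single functional equation and treat the two implications separately. Writing the invariant form as $\omega=\mu\,dx\wedge dy$ with $\mu>0$ smooth on $\mathcal{C}$, invariance under the diagonal map $F(x,y)=(f(x),f(y))$ is exactly
$\mu(f(x),f(y))\,f'(x)f'(y)=\mu(x,y)$.

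For necessity I use that $(N,S)\in\mathcal{C}$ is a fixed point of $F$ (it lies off the diagonal since $N\neq S$). Evaluating the functional equation there and cancelling the finite positive value $\mu(N,S)$ gives $f'(N)f'(S)=1$ at once. To obtain $f'(N)\neq1$ I argue by contradiction: if $f'(N)=f'(S)=1$, I restrict $\mu$ to the $F$-invariant curve $\{y=S\}$. Setting $y=S$ in the functional equation yields $\mu(f(x),S)\,f'(x)=\mu(x,S)$, so $x\mapsto\mu(x,S)$ is a positive invariant density for $f$ restricted to the arc ending at $N$, which is now \emph{parabolic} there. Assuming $N$ attracts on that arc, iteration gives $\mu(x,S)=\mu(f^n(x),S)\,(f^n)'(x)$, and near a parabolic attracting endpoint one checks $(f^n)'(x)\to0$ while $\mu(f^n(x),S)\to\mu(N,S)$ stays bounded; hence $\mu(x,S)\equiv0$, contradicting positivity. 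If instead $N$ repels on that arc, the same argument applies to $f^{-1}$ (or to the invariant curve $\{x=N\}$). This is a clean self-contained obstruction.

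For sufficiency I must construct a smooth positive $F$-invariant form from the data $a:=f'(N)\neq1$, $f'(S)=a^{-1}$. Since $a\neq1$ both fixed points are hyperbolic, and the Sternberg linearization theorem provides smooth coordinates in which $f$ is $\xi\mapsto a\xi$ near $N$ and $\eta\mapsto a^{-1}\eta$ near $S$. I would use these to build a \emph{reference} form $\omega_0=\mu_0\,dx\wedge dy$ that is already invariant near every accumulation point of orbits: near the interior fixed points $(N,S),(S,N)$ the linearized $F$ is $(\xi,\eta)\mapsto(a\xi,a^{-1}\eta)$, of determinant $1$, so $\omega_0=d\xi\wedge d\eta$ works; near the diagonal corners $(N,N),(S,S)$, where $F$ is the homothety $(\xi_1,\xi_2)\mapsto(a\xi_1,a\xi_2)$, the Möbius model $\omega_0=d\xi_1\wedge d\xi_2/(\xi_1-\xi_2)^2$ is invariant and smooth off the diagonal; elsewhere near $\Delta$ I take the standard $dx\wedge dy/(x-y)^2$, for which the Jacobian cocycle $J=\mu_0(f x,fy)f'(x)f'(y)/\mu_0(x,y)$ satisfies $\log J=0$ on $\Delta$. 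Patching with a partition of unity gives a global smooth positive $\omega_0$ with $\log J$ vanishing at all the fixed points and corners and on the diagonal; here is exactly where $f'(N)f'(S)=1$ is used, so that the model at $(N,S)$ has determinant $1$.

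It then remains to correct $\omega_0$ to an invariant form $\omega=u\,\omega_0$, i.e. to solve the cohomological equation $\log u-\log u\circ F=\log J$, which I would do by summation along forward orbits, $\log u=\sum_{n\ge0}(\log J)\circ F^n$. The north--south dynamics ($f^n(z)\to N$ for every $z\neq S$) forces every orbit in $\mathcal{C}$ to accumulate only on the fixed points $(N,N),(N,S),(S,N)$ and on the diagonal, precisely where $\log J$ vanishes, and the telescoping then gives an invariant $\omega$. The main obstacle is to make this a \emph{locally uniform, smooth} sum: one must bound both the number of visits of an orbit to $\{\log J\neq0\}$ and the size of the terms uniformly on compact subsets of $\mathcal{C}$, the delicate region being near $\Delta$, where an orbit can linger before being swept into the corner $(N,N)$; I expect to control this with the uniform contraction at the attracting corner and the hyperbolicity at the saddles. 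A more explicit alternative conjugates $f$ on each complementary arc to a unit translation by some $\theta_j$, turning $F$ into $(s,t)\mapsto(s+1,t+1)$, for which every $G(s-t)\,ds\wedge dt$ is automatically invariant; there the crux is the smooth gluing of the four regional forms across the edges $\{x=N\},\{y=S\},\dots$, again made possible by $f'(N)f'(S)=1$ matching the logarithmic asymptotics of the $\theta_j$.
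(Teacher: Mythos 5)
Your necessity argument is correct for smooth $f$, though it runs differently from the paper's. The paper rules out $f'(N)=1$ by noting that $x\mapsto\omega(x,S)$ would then be a positive invariant density for $f$ on $\Ss^1\setminus\{S\}$, so that its primitive $h$ satisfies $h\circ f=h$ and $f=\mathrm{Id}$ by uniqueness of solutions of $y'=1/\omega(y,S)$; this needs only continuity of $\omega$ and $C^1$ regularity of $f$. You instead iterate the same identity to get $\mu(x,S)=\mu(f^n(x),S)(f^n)'(x)$ and invoke $(f^n)'(x)\to 0$ at a one-sidedly attracting parabolic endpoint. That fact is true here but is itself a bounded-distortion lemma you should record: with $I=[f(x),x]$ the intervals $f^k(I)$ are pairwise disjoint, the mean value theorem gives $\xi_n\in I$ with $(f^n)'(\xi_n)=|f^n(I)|/|I|\to 0$, and $|\log(f^n)'(x)-\log(f^n)'(\xi_n)|\le \mathrm{Lip}(\log f')\sum_k|f^k(I)|$ is uniformly bounded. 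This uses $\log f'$ Lipschitz, so it is fine for $\Diff(\Ss^1)=\Diff^\infty(\Ss^1)$ but would not survive in the low-regularity version of the proposition that the paper actually proves.

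The sufficiency half is where the real content lies, and your write-up leaves exactly that part open. Your local models (the linearized $d\xi\wedge d\eta$ at the saddles $(N,S)$, $(S,N)$ — which is where $f'(N)f'(S)=1$ enters — and Möbius-type models near the diagonal) coincide with the paper's starting point. But your globalization, $\log u=\sum_{n\ge 0}(\log J)\circ F^n$, is a genuinely infinite sum: a partition-of-unity blend only makes $\log J$ vanish \emph{on} $\Delta$ (to second order, with the Schwarzian of $f$ as coefficient), not on a neighborhood of $\Delta$, so every orbit drifting along the diagonal from $(S,S)$ to $(N,N)$ contributes infinitely many nonzero terms. Establishing that such a sum is $C^\infty$, or even $C^1$, is a Livšic-type regularity problem (derivatives of $(\log J)\circ F^n$ carry factors of $DF^n$ that blow up near the repelling corner), and you explicitly defer it (``I expect to control this\dots''). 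That is the gap. The paper sidesteps it entirely: it takes $F$-invariant neighborhoods $U_1,U_2$ of the two saddles bounded by graphs of circle maps commuting with $f$, puts the linearized invariant forms there, and observes that $F$ acts \emph{properly} on $\mathcal C\setminus(U_1\cup U_2)$ (every orbit there escapes to $\Delta$ in both time directions), so an invariant smooth form matching the boundary data is obtained by prescribing it on a fundamental domain. To repair your scheme you would need $\log J\equiv 0$ on a whole collar of $\Delta$, which forces you to build an exactly invariant form along the diagonal first — i.e.\ to run this properness argument anyway; your closing alternative (unit translations on the complementary arcs) is that argument in disguise, with the ``smooth gluing across the edges'' you flag as the crux being precisely what the invariant neighborhoods and linearizations accomplish.
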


For parabolic diffeomorphisms (i.e. having one fixed point), the situation is more complicated. We will see there are some area-preserving examples that are not differentially conjugate to elements of $\PSL(2,\R)$, but that some diffeomorphisms with one fixed point do not preserve any volume form on the cylinder $\mathcal C$.

\subsection{The analytic case}

The counter examples produced by Proposition \ref{hyperbolic} never give an analytic volume form. Indeed, it appears that the analytic case is rigid. \\
\indent We say that   $\rho:\Gamma \to \Diff^\omega(\Ss^1)$ is \textbf{analytically Fuchsian} if there is a real analytic diffeomorphism  $h\in \Diff^\omega(\Ss^1)$ such that $h^{-1}\rho(\Gamma)h\subset \PSL(2,\R)$.

\begin{theo} \label{analytic_hyperbolic} Let $f\in \Diff^{\omega}(\Ss^1)$ have exactly two fixed points. If $f$ preserves an analytic volume form on $\mathcal C$, then $f$ is analytically conjugate to a hyperbolic element of $\PSL(2,\R)$. \end{theo}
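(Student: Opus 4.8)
The plan is to first reduce the statement to a one–dimensional linearization problem carrying a single modulus, and then to use the analytic volume form to kill that modulus.

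First I would reduce to hyperbolic multipliers and set up Koenigs coordinates. Since $f$ preserves an analytic volume form it is in particular area-preserving, so by Proposition \ref{hyperbolic} it is spectrally Möbius-like: its two fixed points $N,S$ satisfy $f'(N)f'(S)=1$ and $f'(N)\neq 1$. After relabeling, and replacing $f$ by $f^{-1}$ if necessary, I assume $\lambda:=f'(N)\in\intoo01$, so $N$ is attracting, $S$ repelling, and the basin of $N$ is all of $\Ss^1\setminus\{S\}$. A hyperbolic element of $\PSL(2,\R)$ is, in the projective chart sending its attracting (resp. repelling) fixed point to $0$ (resp. $\infty$), the map $m_\lambda:t\mapsto\lambda t$. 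By Koenigs' theorem the attracting fixed point admits a real-analytic linearization near $N$, and the functional equation $\phi\circ f=\lambda\phi$ propagates it over the whole basin, yielding a real-analytic diffeomorphism $\phi:\Ss^1\setminus\{S\}\to\R$ with $\phi(N)=0$ and $\phi\circ f=\lambda\phi$. Applying Koenigs to $f^{-1}$ at $S$ gives an analytic coordinate $\zeta$ near $S$ with $\zeta\circ f=\lambda^{-1}\zeta$. Since $1/\phi$ also linearizes $f$ at $S$, comparing it with $\zeta$ on each of the two components of $\Ss^1\setminus\{N,S\}$ (the ratio is $f$-invariant and analytic, hence constant) produces constants $b_1,b_2$ with $1/\phi=b_i\zeta$. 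The key observation is that $\phi$ extends analytically across $S$ — equivalently, $f$ is analytically Fuchsian — if and only if $b_1=b_2$, so the whole theorem reduces to proving this equality.

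Next I transport everything by $\phi$ to the chart $t=\phi(x)$, where $f$ becomes $m_\lambda$ and $\mathcal C$ becomes $\R^2\setminus\Delta$ with $\Delta=\{t_1=t_2\}$. The model form $\omega_0=\frac{dt_1\wedge dt_2}{(t_1-t_2)^2}$ is invariant under $m_\lambda\times m_\lambda$, and so is $\phi_\star\omega$. I would then study their quotient $u:=\phi_\star\omega/\omega_0$, a positive real-analytic function on $\R^2\setminus\Delta$ satisfying $u(\lambda t_1,\lambda t_2)=u(t_1,t_2)$. The claim I am after is that $u$ depends only on the scaling-invariant variable $w=\tfrac{t_1}{t_1-t_2}$. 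This is the hard step and the only place analyticity is essential. I would complexify: $f,\phi,\omega$ extend holomorphically, so $u$ becomes holomorphic and $m_\lambda$-invariant off the complex diagonal near the attracting corner $(N,N)=(0,0)$. Writing $p=t_1$, $q=t_1-t_2$ and expanding $u=\sum_n u_n(p)q^n$ on small circles $|q|=r$, the coefficients $u_n(p)$ are holomorphic near $p=0$ since these circles avoid the diagonal; invariance forces $u_n(\lambda p)=\lambda^{-n}u_n(p)$, and holomorphy at $0$ then forces $u_n(p)$ to be a constant multiple of $p^{-n}$, so $u_n\equiv 0$ for $n\geq 1$ and $u=\sum_{m\geq 0}a_m(p/q)^m=\Psi(w)$ for some analytic $\Psi$. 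The delicate content is exactly this coefficient analysis at the attracting corner, which has no smooth analogue — this is precisely why the smooth counterexamples of Proposition \ref{hyperbolic} exist. Its consequence is immediate: as $t_1\to\pm\infty$ with $t_2=c$ fixed one has $w\to 1$ from both sides, so $u(t_1,c)$ has the same limit $\Psi(1)$ as $t_1\to+\infty$ and as $t_1\to-\infty$.

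Finally I would read off analyticity of $\omega$ at a point $(S,c)$ with $c\neq 0$, which lies in $\mathcal C$. In the analytic coordinate $\zeta_1$ near $S$ one has $t_1=1/(b_i\zeta_1)$ ($i=1$ on one side of $S$, $i=2$ on the other), and a direct computation expresses the density of $\omega$ there as $u(t_1,c)\cdot\frac{-b_i}{(1-b_i c\zeta_1)^2}$. Continuity of this density across $\zeta_1=0$, which is forced because $\omega$ is analytic on $\mathcal C$, gives $-b_1\Psi(1)=-b_2\Psi(1)$; since $\Psi(1)\neq 0$ (the density of a volume form does not vanish on $\mathcal C$), this yields $b_1=b_2$. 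Hence $\phi$ extends to an analytic diffeomorphism of $\Ss^1$ conjugating $f$ to $m_\lambda$, a hyperbolic element of $\PSL(2,\R)$. I expect the main obstacle to be the previous paragraph — controlling $u$ near the diagonal at the attracting corner — with everything else becoming formal once $u=\Psi(w)$ is established.
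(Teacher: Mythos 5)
There are two genuine gaps here, and together they undercut the structure of the argument. The first is the assertion that the ratio $(1/\phi)/\zeta$ is constant on each component of $\Ss^1\setminus\{N,S\}$ ``because it is $f$-invariant and analytic''. An $f$-invariant analytic function on such a component descends to an arbitrary analytic function on the quotient circle $I/\langle f\rangle$ and has no reason to be constant; it would be constant if it extended continuously to an endpoint, but continuity of $1/(\phi\zeta)$ at $S$ is essentially the regularity of the conjugacy at $S$ that you are trying to prove. Asserting $1/\phi=b_i\zeta$ with \emph{constants} $b_i$ amounts to assuming the Mather invariant of $f$ is trivial on each side, which is the main content of the theorem. (This step could conceivably be recovered a posteriori: write $1/\phi=g_i(\zeta)\zeta$ with $g_i$ invariant and show that continuity of the density of $\omega$ at $(S,c)$ forces $g_i$ constant --- but only if your second step were sound.)

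The second and more serious gap is the ``hard step'' $u=\Psi(w)$, which you locate at the diagonal corner $(N,N)$. The Laurent expansion $u=\sum_nu_n(p)q^n$ requires $u$ to extend holomorphically to a full punctured polydisk $\{|p|<\epsilon,\ 0<|q|<r\}$ in $\C^2$, whereas the complexification of a real-analytic function on $\R^2\setminus\Delta$ is only guaranteed on a neighbourhood of $\R^2\setminus\Delta$ that pinches as one approaches the diagonal; scaling invariance does not repair this. Worse, the implication you want is false with the inputs you use: $u=1+\frac12\sin\bigl(2\pi\log|t_1-t_2|/\log\lambda\bigr)$ is positive, real-analytic on $\R^2\setminus\Delta$ and invariant under $(t_1,t_2)\mapsto(\lambda t_1,\lambda t_2)$, yet is not a function of $w$ (its complexification in $q$ is multivalued, so no Laurent series exists). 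What rules out such oscillatory densities is the analyticity of $\omega$ in a \emph{full} neighbourhood of a point of $\mathcal C$ fixed by $(f,f)$, and the only such points are the off-diagonal corners $(N,S)$ and $(S,N)$. That is exactly where the paper works: in linearizing coordinates at $(N,S)$ one writes $\omega=e^{\sigma}dx\wedge dy$ with $\sigma(\lambda x,\lambda^{-1}y)=\sigma(x,y)$, an honest power series kills every monomial $x^ny^p$ with $n\neq p$, so $\omega$ is locally invariant under the whole one-parameter group $(e^tx,e^{-t}y)$; this local Killing field is then extended to a simply connected invariant open set by Amores' theorem, exhibiting $f$ as the time-one map of an analytic flow on $\Ss^1$, hence with trivial Mather invariant and analytically conjugate into $\PSL(2,\R)$. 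If you want to keep your coordinates, move the expansion argument from $(N,N)$ to $(N,S)$ and then confront the global extension across $S$, which is the work your constants $b_1,b_2$ were silently doing for you.
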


For parabolic diffeomorphisms, there are some straightforward analytic counter examples. However, for non elementary representations, i.e. without any finite orbit on $\Ss^1$, there is also a rigidity phenomenon: 

\begin{theo} \label{analytic_non_elementary} If $\rho: \Gamma \to \emph{\Diff}^{\omega}(\Ss^1)$ is a non elementary representation preserving an analytic volume form on $\mathcal C$, then $\rho$ is analytically Fuchsian. \end{theo}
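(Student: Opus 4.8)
The plan is to extract from the invariant volume form a $\rho(\Gamma)$-invariant \emph{analytic projective connection} on $\Ss^1$ and then to integrate it into the desired analytic conjugacy. Write $\omega = k(x,y)\,dx\wedge dy$ with $k$ analytic and positive on $\mathcal C$; invariance under $\gamma:=\rho(\gamma)$ reads
\[ k(\gamma x,\gamma y)\,\gamma'(x)\,\gamma'(y) = k(x,y)\qquad(\gamma\in\Gamma). \]
The model $\tfrac{4\,dx\wedge dy}{(x-y)^2}$ shows the target shape: a pole of order exactly two along the diagonal $\Delta$, whose finite part encodes the projective data. Accordingly set $g:=(x-y)^2 k$, so that the invariance relation becomes $g(\gamma x,\gamma y)=\Theta_\gamma(x,y)\,g(x,y)$ with $\Theta_\gamma=\frac{(\gamma x-\gamma y)^2}{(x-y)^2\gamma'(x)\gamma'(y)}$ analytic, positive, and equal to $1$ along $\Delta$.

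The crucial and, I expect, hardest step is to show that $g$ extends analytically and without zeros across the whole diagonal, i.e. that $k$ has a pole of order exactly two along $\Delta$ with analytic nonvanishing numerator. A single group element does not suffice: since $\rho$ is non-elementary it is topologically Fuchsian by Theorem \ref{topologic}, hence contains $\gamma$ with $\rho(\gamma)$ having two fixed points, and Theorem \ref{analytic_hyperbolic} makes such a $\rho(\gamma)$ analytically conjugate to a hyperbolic $x\mapsto\lambda x$; in that chart $\Theta_\gamma\equiv1$ and $g(\lambda x,\lambda y)=g(x,y)$, so $g=H(x/y)$ for some analytic $H$ on $\{x\neq y\}$, which constrains $g$ only through the \emph{discrete} scaling and says nothing about the behaviour of $H$ at the diagonal value $x/y=1$. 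It is the joint invariance under the whole non-elementary group, whose hyperbolic fixed points are dense in the limit set, that rigidifies $g$: comparing the scaling descriptions attached to different fixed points and continuing analytically (after complexification) forces $g$ to be analytic and nonzero across $\Delta$. Geometrically this is the statement that the $\Gamma$-invariant pairing $\beta(a,b;c,d)=\iint_{[a,b]\times[c,d]}\omega$ degenerates, as intervals share an endpoint, at the logarithmic rate of a cross-ratio, which is exactly what a second-order pole produces; this is the mechanism I would make precise.

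Granting this normal form, expand $\log k(x,x+s)=-2\log|s|+\sum_{j\ge0}a_j(x)\,s^j$, the $a_j$ being the transverse Taylor coefficients of $\log g$, hence analytic on $\Ss^1$. A direct computation from the invariance relation gives $a_0\circ\gamma=a_0$, $a_1=(a_1\circ\gamma)\,\gamma'$, and
\[ a_2(x)=a_2(\gamma x)\,\gamma'(x)^2+\tfrac12\,a_1(x)\tfrac{\gamma''(x)}{\gamma'(x)}+\tfrac16\,S(\gamma)(x), \]
where $S$ is the Schwarzian derivative and the term $\tfrac16 S(\gamma)$ is precisely the $s^2$-coefficient produced by the expansions of $\log\gamma'(x+s)$ and of $-2\log|\gamma(x+s)-\gamma(x)|$. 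Consequently the analytic function $q:=6a_2-3a_1'$ obeys the Schwarzian cocycle
\[ q=(q\circ\gamma)\,\gamma'^2+S(\gamma)\qquad(\gamma\in\Gamma), \]
i.e. $q$ is a $\rho(\Gamma)$-invariant analytic projective connection. Integrating it via the Hill equation $u''+\tfrac12\,q\,u=0$ and forming $h:=u_1/u_2$ for two independent solutions yields an analytic local diffeomorphism $h$ into $\R\proj^1$ (indeed $h'=-W/u_2^2\neq0$) with $S(h)=q$. Since $S(h\circ\gamma)=(q\circ\gamma)\gamma'^2+S(\gamma)=q=S(h)$ and two maps with equal Schwarzian differ by post-composition with a Möbius transformation, we obtain $h\circ\rho(\gamma)=\bar\gamma\circ h$ with $\bar\gamma:=h\,\rho(\gamma)\,h^{-1}\in\PSL(2,\R)$ for all $\gamma$.

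It remains to make $h$ a genuine analytic diffeomorphism of $\Ss^1$, equivalently to see that the projective structure defined by $q$ is the standard one. This is where I would again use that $\rho$ is topologically Fuchsian and non-elementary: let $h_0\in\Homeo(\Ss^1)$ be the conjugacy of Theorem \ref{topologic}, so $\{h_0\rho(\gamma)h_0^{-1}\}$ is a non-elementary Fuchsian group. Both $h$ and $h_0$ conjugate $\rho$ to non-elementary subgroups of $\PSL(2,\R)$, and a conjugacy between non-elementary convergence groups is determined on limit sets by the attracting/repelling fixed points of hyperbolic elements; hence $h$ agrees with $h_0$ on the limit set up to a Möbius transformation. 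Therefore $h$ is monotone and injective there, its holonomy is trivial, and being an analytic local diffeomorphism it is globally injective, hence an analytic diffeomorphism of $\Ss^1$. Then $h\,\rho(\Gamma)\,h^{-1}\subset\PSL(2,\R)$, so $\rho$ is analytically Fuchsian, as claimed.
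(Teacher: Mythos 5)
Your proposal hinges on a step that you explicitly leave unproved, and that step is where essentially all of the difficulty of the theorem is concentrated: the claim that $g=(x-y)^2k$ extends analytically and without zeros across the diagonal. Nothing in the hypotheses constrains the behaviour of $\omega$ near $\Delta$; the volume form is only given on the open manifold $\mathcal C$. As you yourself observe, a single linearized hyperbolic element only yields invariance of $g$ under a \emph{discrete} scaling $(x,y)\mapsto(\lambda x,\lambda y)$, which says nothing about the limit $x/y\to 1$, and the proposed remedy (``comparing the scaling descriptions attached to different fixed points and continuing analytically after complexification'') is not an argument. The claim is in fact true, but for a reason that is downstream of the theorem: once $\rho$ is known to be analytically Fuchsian, any invariant analytic function on $\mathcal C$ is constant (it is constant on $(L_{\rho(\Gamma)}\times\Ss^1\cup\Ss^1\times L_{\rho(\Gamma)})\setminus\Delta$ by the argument of Lemma \ref{curvature_limit_set}, hence constant everywhere by analyticity), so the invariant form is forced to be a multiple of the pullback of $\frac{4\,dx\wedge dy}{(x-y)^2}$. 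Deriving the diagonal normal form \emph{a priori} therefore looks essentially as hard as the theorem itself, and without it the coefficients $a_j$, hence the connection $q=6a_2-3a_1'$, are not even defined. (Your cocycle computation for $q$ and the Hill-equation integration are correct granting the normal form, and the final globalization step is in the spirit of Lemma 5.1 of \cite{Gh93}, though the assertion that the developing map agrees with the topological conjugacy on the limit set would also need care, since $h$ is a priori only a local diffeomorphism.)

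The paper avoids the diagonal entirely. It considers the Lorentz metric $\omega\,dx\,dy$ on $\mathcal C$: its curvature is an invariant analytic function, hence constant on the grid over the limit set by Lemma \ref{curvature_limit_set}, hence constant on all of $\mathcal C$ by analyticity (the limit set has no isolated points, so the restriction to each horizontal and vertical line vanishes identically once its derivatives do on an accumulating set). Lemma \ref{constant_curvature} then produces the invariant projective structure not from asymptotics of $\omega$ at $\Delta$ but from the projective parametrizations of the isotropic geodesics (the horizontal circles), using the local isometry with the constant-curvature model to see that the transition maps are projective, and concludes with Ghys's lemma. Your projective-connection idea is close in spirit to this, but the paper's way of extracting the projective structure only uses $\omega$ on compact subsets of $\mathcal C$, which is why it closes where your argument does not. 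If you want to salvage your route, the natural fix is to first prove constant curvature as above; with the metric locally isometric to the model near $\Delta$, the second-order pole then does follow, but at that point you may as well finish as the paper does.
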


The treatment of the non elementary case will be  very different from the case of  a single diffeomorphism, mainly since the preserved volume form is unique for an analytic non elementary group.

\subsection{The topologically transitive case}

A theorem of Ghys, proved in \cite{Gh93},  states that any representation of a surface group (i.e. the fundamental group of a compact surface without boundary) into $\Diff(\Ss^1)$ with maximal Euler number is differentially Fuchsian. 
One particularity of these representations is that they are topologically transitive (they are even minimal: all orbits are dense). Given the condition of preserving a volume on $\mathcal C$, we also obtain a rigidity result.

\begin{theo} \label{transitive} Let $\rho: \Gamma \to \emph{\Diff}(\Ss^1)$ be a topologically transitive representation that preserves a $C^2$ volume form on $\mathcal C$. Then $\rho$ is differentially Fuchsian.  \end{theo}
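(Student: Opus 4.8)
The plan is to upgrade the topological conjugacy from Theorem~\ref{topologic} to a differentiable one by exploiting topological transitivity together with the regularity of the invariant volume form on $\mathcal C$. By Theorem~\ref{topologic} there is a homeomorphism $h$ with $h^{-1}\rho(\Gamma)h\subset\PSL(2,\R)$; write $\rho_0 = h^{-1}\rho h$ for the conjugated (Fuchsian) representation. The representation $\rho$ preserves a $C^2$ volume form $\omega$ on $\mathcal C$, and $\rho_0=h^{-1}\rho h$ preserves the $\PSL(2,\R)$-invariant form $\omega_0 = \tfrac{4\,dx\wedge dy}{(x-y)^2}$. The key object is the comparison of these two volume forms: pulling $\omega$ back by $h$ produces a (possibly only measurable) $\rho_0$-invariant measure on $\mathcal C$, while $\omega_0$ is the canonical smooth $\rho_0$-invariant volume. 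The first main step is therefore to prove a \emph{uniqueness} statement: for a topologically transitive (indeed minimal, since transitivity on $\mathcal C$ forces minimality on $\Ss^1$) Fuchsian group, the invariant volume form on $\mathcal C$ is unique up to scale. This reduces to studying the $\PSL(2,\R)$-action on $\mathcal C$, which is transitive (identifying $\mathcal C$ with the unit tangent bundle / the group $\PSL(2,\R)$ itself up to finite cover), so an invariant measure is a Haar-type measure and is unique among invariant Radon measures once transitivity of the relevant subgroup is established.

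The second main step is to transfer this uniqueness back to the circle to control $h$. Writing $\omega = \lambda(x,y)\,dx\wedge dy$ with $\lambda$ of class $C^2$ and positive on $\mathcal C$, the $\rho$-invariance is the cocycle equation $\lambda(\gamma x,\gamma y)\,\rho(\gamma)'(x)\,\rho(\gamma)'(y) = \lambda(x,y)$ for all $\gamma\in\Gamma$. The strategy is to recover the conjugating diffeomorphism directly from $\lambda$: near the diagonal, the leading behaviour of $\lambda$ should look like $\tfrac{4}{(x-y)^2}$ in the appropriate coordinate, and extracting this singular part yields a candidate smooth change of coordinates $\p$ on $\Ss^1$ whose Schwarzian or first-order data is read off from $\lambda$ along $\Delta$. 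Concretely, I would examine the restriction of $\lambda$ (suitably normalized) to a neighborhood of the diagonal and show that the function $g(x) = \lim_{y\to x}(x-y)^2\sqrt{\lambda(x,y)}\big/2$ (or a comparable diagonal limit built from the $C^2$ jet of $\lambda$ transverse to $\Delta$) defines a smooth positive function on $\Ss^1$; integrating $1/\sqrt{g}$ (up to constants) produces a $C^2$ diffeomorphism $\p$ of $\Ss^1$. The $C^2$ hypothesis on $\omega$ is exactly what guarantees that this diagonal jet is well defined and smooth enough to integrate to a diffeomorphism.

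The third step is to verify that the diffeomorphism $\p$ so constructed actually conjugates $\rho$ into $\PSL(2,\R)$. Here I would combine two facts: first, $\p$ agrees with the topological conjugacy $h$ (up to a Möbius transformation) because both transport the invariant volume data of $\rho$ to the standard volume data of $\PSL(2,\R)$, and by the uniqueness of Step~1 this transport is rigid; second, $h$ is already known to conjugate $\rho$ into $\PSL(2,\R)$ topologically. Thus $\p^{-1}\rho(\Gamma)\p$ consists of homeomorphisms that are simultaneously $\PSL(2,\R)$-elements (from the topological conjugacy, after post-composing with a Möbius map) and genuine diffeomorphisms conjugate via a diffeomorphism to $\rho(\Gamma)$, forcing $\p^{-1}\rho(\Gamma)\p\subset\PSL(2,\R)$. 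The subtle point is matching $\p$ with $h$: one shows $\p\circ h^{-1}$ commutes with the $\PSL(2,\R)$-action on the relevant minimal set, and a group with a dense orbit admitting a commuting homeomorphism must have that homeomorphism be a Möbius map on the minimal set, hence $\p$ differs from $h$ by an element of $\PSL(2,\R)$.

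I expect the main obstacle to be the second step: rigorously extracting a smooth conjugating diffeomorphism from the $C^2$ volume form near the diagonal. The issue is that $\lambda$ blows up like $(x-y)^{-2}$ along $\Delta$, so the relevant "diagonal jet" lives in the transverse $C^2$ data of the blown-up form, and one must check that the candidate $\p$ is a bona fide diffeomorphism (positive derivative, correctly defined on all of $\Ss^1$ as a circle map, no monodromy obstruction) rather than merely a local object. Controlling regularity here—ensuring $C^2$ of $\omega$ propagates to at least $C^1$, and ideally $C^\infty$, of $\p$ via the invariance cocycle and the analytic structure of the $\PSL(2,\R)$-orbit—is where the argument is most delicate and where topological transitivity is used a second time to bootstrap regularity from the dense orbit.
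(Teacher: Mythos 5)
Your Step 2 is where the argument breaks, and it breaks in a way that cannot be patched within your framework. First, nothing in the hypotheses controls $\omega$ near the diagonal: $\omega$ is only assumed $C^2$ on the open cylinder $\mathcal C$, so there is no reason for $(x-y)^2\lambda(x,y)$ to admit a limit, let alone a $C^2$ extension, as $y\to x$. Second, even in the favorable case where $\rho$ \emph{is} differentially Fuchsian via $\p$ and $\omega = c\,\p^*\!\left(\frac{4\,dx\wedge dy}{(x-y)^2}\right)$, one computes $\lambda(x,y)=\frac{4c\,\p'(x)\p'(y)}{(\p(x)-\p(y))^2}$ and the Taylor expansion gives $(x-y)^2\lambda(x,y)\to 4c$, a \emph{constant}: the leading diagonal asymptotics carries no information whatsoever about $\p$. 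The conjugacy is encoded only in the subleading term of the expansion $\frac{\p'(x)\p'(y)}{(\p(x)-\p(y))^2}=\frac{1}{(x-y)^2}-\frac{1}{6}S(\p)(x)+o(1)$, i.e.\ in the Schwarzian, and extracting it would require two more orders of regularity of $(x-y)^2\lambda$ \emph{up to} $\Delta$ — regularity you do not have and which is not implied by $C^2$ on $\mathcal C$. So your candidate $g$ is either trivial or undefined, and no diffeomorphism comes out of it. Your Step 1 also rests on a false premise: $\rho_0(\Gamma)$ is a proper subgroup of $\PSL(2,\R)$ and does not act transitively on $\mathcal C$, and its invariant Radon measures are very far from unique (e.g.\ measures carried by the closed orbit associated to a hyperbolic element). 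Uniqueness holds only within the class of \emph{continuous} invariant densities, and the proof of that uses the density in $\mathcal C$ of fixed-point pairs of hyperbolic elements — which in particular forces you to treat separately the case where $\Gamma$ has no hyperbolic element (all elements elliptic, or parabolic), a case your proposal never addresses. Finally, the comparison "$h^*\omega$ versus $\omega_0$" in Step 3 is ill-posed for the reason the paper itself flags: $h$ is only a homeomorphism, so $h^*\omega$ is merely a measure with no density.

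The paper's proof avoids the diagonal entirely and works in the interior of $\mathcal C$. It associates to $\omega$ the Lorentz metric $\omega(x,y)\,dx\,dy$, whose curvature $K=\frac{2}{\omega}\frac{\partial^2\Log\omega}{\partial x\partial y}$ is a continuous invariant function; by minimality and density of hyperbolic fixed-point pairs (Lemma \ref{curvature_limit_set}) it is constant on all of $\mathcal C$. Constant curvature gives local isometries with a model space, and the geodesic parametrizations of the horizontal circles $\Ss^1\setminus\{y\}\times\{y\}$ then induce a $\rho$-invariant projective structure on $\Ss^1$, shown to be standard via Lemma 5.1 of Ghys (Lemma \ref{constant_curvature}); the parabolic-only and elliptic-only cases are handled by separate short arguments. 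If you want to salvage your diagonal-jet idea, the object you should be computing is precisely this projective structure, but you must produce it from interior data (as the paper does with isotropic geodesics), not from an expansion at $\Delta$ that the hypotheses do not license.
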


\begin{rem} This result actually contains Proposition \ref{elliptic}, since diffeomorphisms that are topologically conjugate to a rational rotation are automatically differentially conjugate to this rotation, and irrational rotation are topologically transitive. \end{rem}
The $C^2$ regularity hypothesis is not only practical for the proof (it is linked to a notion of curvature), but it is important as there are some counter examples if we do not ask for enough regularity on the volume form.

\subsection{The exceptional minimal set case} The case of a single diffeomorphism suggests that the preservation of a volume form on $\mathcal C$ can be understood by looking at the fixed points. In the setting of Theorem \ref{transitive}, fixed points (when they exist) are dense in $\Ss^1$. We will now study groups for which the closure of fixed points is a Cantor set.
\subsubsection{Differential structure on the Cantor set}


\indent The definition of $L$-differential conjugacy suggests that we define a notion of diffeomorphisms between Cantor sets.\\
\indent If $C\subset \Ss^1$ is a closed set, then a function $f:C\to \Ss^1$ is $C^k$ in the Whitney sense if $f$ admits a Taylor development of order $k$ at every point of $C$, the coefficients being continuous functions. This is equivalent to asking that $f$ is the restriction to $C$ of a $C^k$ function on $\Ss^1$.\\
\indent We say that $f:C_1\to C_2$ (where $C_1$ and $C_2$ are two Cantor sets in $\Ss^1$) is a $C^k$ diffeomorphism if $f$ is a cyclic order preserving homeomorphism such that $f$ and $f^{-1}$ are $C^k$ in the Whitney sense. This is equivalent to asking that $f$ is the restriction to $C_1$ of a circle diffeomorphism.\\
\indent With this definition, we see that two non elementary representations $\rho_1,\rho_2:\Gamma \to \Diff(\Ss^1)$ are $L$-differentially conjugate if there is a homeomorphism $h\in \Homeo(\Ss^1)$ such that $h \rho_1 h^{-1} = \rho_2$ and such that the restriction $h_{/L_{\rho_1(\Gamma)}} : L_{\rho_1(\Gamma)}\to L_{\rho_2(\Gamma)}$ is a diffeomorphism.\\
\indent If $\rho:\Gamma \to \Diff(\Ss^1)$ is $L$-differentially Fuchsian, then let $h\in \Homeo(\Ss^1)$ be such that $\rho_0=h\rho h^{-1}$ is Fuchsian and such that $h_{/L_{\rho(\Gamma)}} : L_{\rho(\Gamma)}\to h(L_{\rho(\Gamma)})$ is a diffeomorphism. Let $\p\in \Diff(\Ss^1)$ be such that $\p_{/L_{\rho(\Gamma)}}=h_{/L_{\rho(\Gamma)}}$. We set $h_1 = \p \circ h^{-1}$ and $\rho_1 = h_1 \rho_0 h_1^{-1} = \p \rho \p^{-1}$. Since $\rho_1$ and $\rho$ are differentially conjugate, we see that $\rho$ is area-preserving if and only if $\rho_1$ is area-preserving. That way, we reduced the problem to a representation $\rho_1$ such that $\rho_1=h_1\rho_0 h_1^{-1}$ where $\rho_0$ is Fuchsian and $h_1$ is the identity on $L_{\rho_0(\Gamma)}$. We get a reformulation of Theorem \ref{area_preserving} which we will use for its proof.

\begin{theo} \label{non_Fuchsian} Let $\rho: \Gamma \to \emph{\PSL}(2,\R)$ be a convex cocompact representation and let $h\in \emph{\Homeo}(\Ss^1)$ be such that $h_{/L_{\rho(\Gamma)}}=Id$ and  $\rho_1 =h \rho h^{-1} $ has values in $\emph \Diff(\Ss^1)$. Then $\rho_1$ preserves a $C^2$ volume form on $\mathcal C$.\end{theo}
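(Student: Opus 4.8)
The plan is to exhibit the invariant form as the pushforward of the Fuchsian (Liouville) form and to prove that it is $C^2$; everything hinges on the fact that the limit set $L:=L_{\rho(\Gamma)}$ is a perfect set on which $h$ is the identity. Since $\rho$ is convex cocompact and non elementary, $L$ is a Cantor set and the complementary gaps $\{I_k\}$ are permuted by $\rho(\Gamma)$ with finitely many orbits; each gap $I$ is bounded by the two fixed points of a primitive hyperbolic element $g_I$ generating $\Stab(I)=\langle g_I\rangle$. As $h$ fixes $L$ pointwise it fixes every gap endpoint, hence maps each gap to itself, and $\rho_1=\rho$ on $L$. Writing $H=h\times h$, a homeomorphism of $\mathcal C$, and $\omega_0=\frac{4\,dx\wedge dy}{(x-y)^2}$ for the $\rho$-invariant form, the candidate is $\omega_1:=H_\star\omega_0$: since $\rho_1(\gamma)=H\rho(\gamma)H^{-1}$ and $\rho(\gamma)^\star\omega_0=\omega_0$, the form $\omega_1$ is automatically $\rho_1$-invariant and positive, so the whole problem is regularity. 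A first decisive observation is that $\rho_1(\gamma)'(p)=\rho(\gamma)'(p)$ for every $p\in L$ and every $\gamma$: as $L$ is perfect there is a sequence $x_n\in L\setminus\{p\}$, $x_n\to p$, and since $\rho_1(\gamma)=\rho(\gamma)$ on $L$ the difference quotients of the smooth map $\rho_1(\gamma)$ along $x_n$ converge to $\rho(\gamma)'(p)$. Applied to $g_I$ at its fixed endpoints $N,S$ this gives $\rho_1(g_I)'(N)\rho_1(g_I)'(S)=\rho(g_I)'(N)\rho(g_I)'(S)=1$, so $\rho_1$ is spectrally Möbius-like with the Fuchsian spectrum (compare Proposition~\ref{hyperbolic}); in particular the multipliers of $\rho_1(g_I)$ and $\rho(g_I)$ agree at both endpoints.

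Next I would smooth $h$ inside the gaps. On a fixed gap $I$, the maps $\rho(g_I)|_I$ and $\rho_1(g_I)|_I$ are $C^\infty$ interval diffeomorphisms with hyperbolic fixed points at both ends and equal multipliers; by Sternberg linearization (there are no resonances in dimension one) they are $C^\infty$-conjugate near each endpoint and, embedding $\rho(g_I)|_I$ in its smooth generating flow, on the whole interior. The given topological conjugacy $h|_I$ then differs from a smooth conjugacy by an element of the centralizer of $\rho(g_I)|_I$, which is a single time map of that flow and hence smooth. Thus $h|_I$ is a diffeomorphism of the interior of $I$, it is smoothly linearizable at each endpoint, and $h'=1$ there. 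Consequently $\omega_1$ is a smooth positive form on the full-measure union of the regions where both coordinates lie in gap interiors, and the pushforward is absolutely continuous rather than singular.

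It then remains to show that the density of $\omega_1$ extends $C^2$ across the loci where a coordinate lies in $L$. I would split $\mathcal C$ into the $\rho_1$-invariant regions according to whether the two coordinates lie in one gap, in two distinct gaps, or with at least one coordinate in $L$, and analyse the density near each boundary type in cross-ratio adapted coordinates, in which $\omega_0$ takes a particularly simple form. Since $h$ is the identity on the perfect set $L$ its Whitney jet along $L$ is that of the identity, and the first-order tangency from the previous step lets the Jacobian of $H$ extend continuously to these loci with limiting value that of $\omega_0$. Convex cocompactness then enters to make everything uniform: there are only finitely many orbits of gaps and the part of $\mathcal C$ with both coordinates in $L$ is cocompact under $\rho(\Gamma)$, so the finitely many local models give uniform bounds that patch the local $C^2$ extensions into a single $\rho_1$-invariant $C^2$ volume form.

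The delicate point, which I expect to be the main obstacle, is precisely the $C^2$ matching at the gap endpoints: the smoothing step only yields first-order tangency of $h$ to the identity there, whereas the identity Whitney jet coming from $L$ requires the second-order data to agree as well for the pushed density to be twice differentiable. This is where the $C^2$ (rather than $C^\infty$) regularity, linked to the curvature interpretation mentioned in the introduction, is both needed and sharp. I expect to resolve it either by checking that, in the cross-ratio coordinates, the density depends on $h$ only through $h'$ and a combination controlled to the required order, or by exploiting the extra freedom of replacing $\omega_0$ by $\phi\,\omega_0$ for a $\rho$-invariant function $\phi$ (a function on the quotient, prescribed near the finitely many funnel ends) chosen to cancel the second-order mismatch at every gap endpoint simultaneously.
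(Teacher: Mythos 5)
Your strategy founders at its very first step: the candidate $\omega_1=H_\star\omega_0$ with $H=h\times h$ is in general not a volume form, because $h$ need not be differentiable at \emph{any} interior point of a gap. The claim that $h_{/I}$ ``differs from a smooth conjugacy by an element of the centralizer of $\rho(g_I)_{/I}$, which is a single time map of that flow'' is false: the centralizer of $\rho(g_I)$ in $\Homeo(\mathrm{int}\,I)$ is enormous, since any homeomorphism of a fundamental domain $\intff{a}{\rho(g_I)(a)}$ fixing its endpoints extends $g_I$-equivariantly to a homeomorphism of $\mathrm{int}\,I$ commuting with $\rho(g_I)$. The rigidity ``commuting $\Rightarrow$ time map of the flow'' used in subsection \ref{subsec:hyperbolic} requires the commuting map to be differentiable at a fixed point, which is exactly what you do not have. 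Concretely, take $h$ equal to a devil's-staircase homeomorphism on a fundamental domain of one gap, extended $\Gamma$-equivariantly to its orbit and equal to the identity elsewhere: then $h$ commutes with $\rho$, so $\rho_1=\rho\subset\PSL(2,\R)$ and all hypotheses of the theorem hold, yet $H_\star\omega_0$ is a singular measure. (This is precisely the warning in the introduction that for $h$ merely continuous, $h^\star\omega$ need not be absolutely continuous.) Your preliminary observations that $\rho_1(\gamma)'=\rho(\gamma)'$ on $L$ and that $\rho_1$ is spectrally M\"obius-like are correct and useful, but they do not salvage the pushforward. Moreover, even granting smoothness of $h$ on gap interiors, you explicitly leave the $C^2$ matching along $L\times\Ss^1\cup\Ss^1\times L$ unresolved (``I expect to resolve it either by\dots''), and that matching is where essentially all of the difficulty of the theorem lives.

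The correct construction cannot be a pushforward of $\omega_0$; one must build the form from scratch. The paper does this by encoding $\rho_1$ as the transverse structure of a flow $\psi^t$ on a $3$-manifold whose non-wandering set is hyperbolic and on which $\psi^t$ is smoothly conjugate to the geodesic flow, solving the resulting cohomological equation on $W^s(\Omega_\psi)\cup W^u(\Omega_\psi)$ by an explicit convergent integral (Lemma \ref{invariant_stable_manifold}), and translating back to get a smooth form invariant exactly along $(L_\Gamma\times\Ss^1\cup\Ss^1\times L_\Gamma)\setminus\Delta$ (Lemma \ref{back_to_C}). On each vertical strip $\overline I\times\Ss^1\setminus\Delta$ the form is then \emph{chosen} on a fundamental domain and spread by equivariance under the single hyperbolic element $\rho_1(g_I)$ --- using Proposition \ref{hyperbolic} to supply an auxiliary $\rho_1(g_I)$-invariant form --- and finally spread over the whole group; the bulk of the proof (Lemma \ref{vertical}, Propositions \ref{continuous} and \ref{smooth}) is devoted to verifying that these freely chosen pieces glue to a globally $C^2$ density, via careful control of the Jacobian products $\rho_1(\gamma_k)'(x_k)\rho_1(\gamma_k)'(y_k)$ along sequences approaching $L_\Gamma$. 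If you want to pursue your own route, you would need to replace the pushforward by such a choice-then-glue construction.
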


We will also show that some  specific  deformations of Schottky groups provide non differentially Fuchsian representations that satisfy the hypothesis of this theorem. The proof of Theorem \ref{non_Fuchsian} will take a substantial part of this paper (sections \ref{sec:flow} and \ref{sec:non_fuchsian}). Because of the lower regularity examples in the topologically transitive case mentioned above, it will be necessary to pay  particular attention to the regularity of the obtained volume form. \\
\indent A natural development would be to ask wether the converse is true.

\begin{conj} \label{conj_L_diff} If  $\rho: \Gamma \to \emph\Diff(\Ss^1)$ is non elementary and area-preserving, is it $L$-differentially Fuchsian? \end{conj}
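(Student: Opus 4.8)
The plan is to construct an invariant volume form explicitly by transporting the standard $\PSL(2,\R)$-invariant form through the flexibility available in the directions transverse to the limit set. Since $h$ is the identity on $L_{\rho(\Gamma)}$, the representations $\rho$ and $\rho_1$ agree on the limit set, so the difficulty is concentrated entirely in the gaps $\Ss^1\setminus L_{\rho(\Gamma)}$. My first step is to understand the geometry of $\mathcal C$ relative to the limit set. I would decompose $\mathcal C=\Ss^1\times\Ss^1\setminus\Delta$ according to which gap (or limit point) each coordinate lies in, and observe that the domain $\mathcal C_L=(L_{\rho(\Gamma)}\times L_{\rho(\Gamma)})\setminus \Delta$ carries the restriction of the Fuchsian invariant form $\frac{4\,dx\wedge dy}{(x-y)^2}$, on which $\rho_1$ acts exactly as $\rho$ does.

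The key idea is that convex cocompactness controls the dynamics near the Cantor set: the gaps of $L_{\rho(\Gamma)}$ are permuted by $\Gamma$ with bounded geometry, and each gap is eventually contracted under the action, so there are only finitely many orbits of gaps. I would first choose, on a set of representatives for these finitely many gap-orbits, an auxiliary smooth density that matches the Fuchsian form to high order along the endpoints of each gap (which lie in $L_{\rho(\Gamma)}$, where $\rho_1=\rho$). Then I would \emph{spread} this choice over the whole orbit using the group action itself: declaring the form on $\gamma\cdot(\text{gap})$ to be the pushforward under $\rho_1(\gamma)$ of the chosen form guarantees invariance by construction. The point requiring care is that this pushforward prescription must be consistent (independent of the chosen $\gamma$ taking the representative gap to the given one), which forces a compatibility condition with the stabilizers; here convex cocompactness ensures the gap stabilizers are as simple as possible (trivial or cyclic), so consistency reduces to a single functional equation that can be solved.

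The main obstacle — and the reason the $C^2$ (rather than $C^\infty$) regularity is the natural target — is gluing these pieces into a globally $C^2$ form on all of $\mathcal C$, including along the boundary $\partial\mathcal C_L$ where gaps accumulate onto $L_{\rho(\Gamma)}$. Near an accumulation of gaps, the contraction factors $\rho_1(\gamma)'$ and the widths of the gaps both tend to zero, and one must verify that the transported densities, together with their first and second derivatives, converge to the correct values coming from the Fuchsian form on $\mathcal C_L$. This is where the hypothesis $h\in\Homeo(\Ss^1)$ with $\rho_1$ valued in $\Diff(\Ss^1)$ is used: the bounded distortion of $\rho_1(\gamma)$ across the gaps (a consequence of the $C^1$, and for the second derivative the $C^2$, control inherited from the diffeomorphism $\varphi$ extending $h$ on $L$) gives the quantitative estimates needed for $C^2$ matching. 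Controlling the second derivatives is genuinely the delicate point, since naive transport only controls the density values and first derivatives; I expect to need a curvature-type normalization of the auxiliary form on each representative gap, chosen so that the second-order Taylor data agrees with the Fuchsian form at the gap endpoints, after which the convergence estimates close up.

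Finally, I would check invariance: by the orbit-spreading construction the form is $\rho_1(\Gamma)$-invariant on the union of gaps, and on $\mathcal C_L$ it is the Fuchsian form which is invariant since $\rho_1=\rho$ there; the two agree to second order along the common boundary, yielding a single $C^2$ volume form on $\mathcal C$ preserved by $\rho_1$. This construction, carried out in detail over sections \ref{sec:flow} and \ref{sec:non_fuchsian}, proves that $\rho_1$ is area-preserving.
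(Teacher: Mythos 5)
You have proved the wrong statement. The statement in question is Question \ref{conj_L_diff}, which the paper poses as an \emph{open question} and explicitly does not answer (``Even though we do not have an answer to this exact question\dots''). It asks whether \emph{area-preserving implies $L$-differentially Fuchsian}. Your proposal instead sketches the converse implication, namely Theorem \ref{non_Fuchsian}: starting from a representation $L$-differentially conjugate to a convex cocompact Fuchsian one, you construct an invariant volume form. A proof of Question \ref{conj_L_diff} would have to run in the opposite direction: given only an invariant volume form on $\mathcal C$, produce a topological conjugacy to a Fuchsian group (which exists by Theorem \ref{topologic}) and then show it is differentiable \emph{along the limit set} in the Whitney sense. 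Nothing in your proposal addresses that; the closest the paper comes is Theorem \ref{rigidity_schwarzian}, an infinitesimal statement that the Schwarzian derivatives of the $\rho(\gamma)$ vanish on the limit set in a suitable projective structure, which is a necessary condition but is not shown to integrate to an actual $L$-differentiable conjugacy.

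Even read charitably as a sketch of Theorem \ref{non_Fuchsian}, your construction has a genuine gap. Spreading a form chosen on finitely many representative gaps by pushforward under $\rho_1(\gamma)$ does give invariance on the union of gap products, but the $C^2$ matching at accumulation points of gaps cannot be closed with only the data of the Fuchsian form on $(L_{\Gamma}\times L_{\Gamma})\setminus \Delta$: the paper's regularity arguments (Lemma \ref{vertical}, Propositions \ref{continuous} and \ref{smooth}) all rely crucially on first having a \emph{smooth invariant form on the larger set} $(L_{\Gamma}\times \Ss^1\cup \Ss^1\times L_{\Gamma})\setminus \Delta$, i.e.\ on mixed pairs with only one coordinate in the limit set. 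That input is obtained by a mechanism entirely absent from your sketch: one builds a flow $\psi^t$ on a $3$-manifold with transverse structure given by $\rho_1$, uses hyperbolicity of its non-wandering set, and extends a solution of the cohomological equation to $W^s(\Omega_\psi)\cup W^u(\Omega_\psi)$ via exponential convergence along stable and unstable manifolds (Lemma \ref{invariant_stable_manifold}, then Lemma \ref{back_to_C}). Moreover, the estimates at gap accumulation do not come from ``bounded distortion'' of the $\rho_1(\gamma)$ — in fact $\rho_1(\gamma_k)'(y_k)\to 0$ along the relevant sequences, and the paper must cancel these exploding or vanishing Jacobians against each other using cross-ratio-type identities such as equation \eqref{equivariance}, the convergence property of the sequence $\gamma_k$, and the uniqueness up to scale of invariant forms on axes; a second-order Taylor normalization at gap endpoints, as you propose, does not by itself control the limits of $f_{xx}$, $f_{xy}$, $f_{yy}$ along sequences leaving every vertical strip.
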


\subsubsection{Infinitesimal rigidity}

Even though we do not have an answer to this exact question, we will see that there is some rigidity on the limit set by observing order three derivatives. The Schwarzian derivative, defined by $S(f)=(\frac{f'''}{f'}-\frac{3}{2}(\frac{f''}{f'})^2)dx^2$, is a quadratic differential that vanishes only for $f\in \PSL(2,\R)$. We obtain the following:

\begin{theo} \label{rigidity_schwarzian} If $\rho: \Gamma \to \Diff(\Ss^1)$ is a non elementary representation that preserves a smooth volume form on $\mathcal C$, then there is $h\in \Diff(\Ss^1)$ such that $S(h\circ\rho(\gamma)\circ h^{-1})(x)=0$ for all $\gamma \in \Gamma$ and $x\in L_{h\rho(\Gamma) h^{-1}}$. \end{theo}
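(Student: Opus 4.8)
The plan is to reduce to the case where the limit set $L=L_{\rho(\Gamma)}$ is an exceptional minimal (Cantor) set: when the action is minimal ($L=\Ss^1$) it is in particular topologically transitive, so Theorem \ref{transitive} already gives a \emph{global} differentiable conjugacy into $\PSL(2,\R)$, whence $S(h\circ\rho(\gamma)\circ h^{-1})$ vanishes identically. So from now on $L\ne\Ss^1$. Writing the invariant volume form near the diagonal as $\omega=\frac{b(x,y)}{(x-y)^2}\,dx\wedge dy$, I would first establish that this is the correct singular structure: the leading coefficient $\beta(x)=b(x,x)$ must be a positive invariant section of the $(2-\alpha)$-density bundle, where $\alpha$ is the pole order, and the existence of elements with a genuine hyperbolic fixed point (guaranteed by the spectrally Möbius-like property, Proposition \ref{hyperbolic}) forces $\alpha=2$ and $\beta$ to be an honest invariant \emph{function}. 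After replacing $\omega$ by its symmetrization under $(x,y)\mapsto(y,x)$ I may assume $b$ is symmetric and smooth with $b>0$ on the diagonal. Invariance of $\omega$ reads
$$b(\rho(\gamma)x,\rho(\gamma)y)\,F_\gamma(x,y)=b(x,y),\qquad F_\gamma(x,y)=\frac{\rho(\gamma)'(x)\,\rho(\gamma)'(y)\,(x-y)^2}{(\rho(\gamma)x-\rho(\gamma)y)^2}.$$

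The key computation is the diagonal expansion of the cross-ratio factor $F_\gamma$. A direct Taylor expansion of $\rho(\gamma)$ gives, identifying $S(f)$ with its coefficient function,
$$F_\gamma(x,x+s)=1+\tfrac16\,S(\rho(\gamma))(x)\,s^2+O(s^3),$$
so the Schwarzian appears precisely as the second-order deviation of $F_\gamma$ from $1$ along the diagonal. I then expand the invariance relation to second order in the symmetric diagonal variable. At order zero one gets $\beta\circ\rho(\gamma)=\beta$; since the $\Gamma$-action on $L$ is minimal and $\beta$ is continuous, $\beta\equiv c>0$ on $L$, and because every point of the perfect set $L$ is a two-sided accumulation point of $L$, differentiability of $\beta$ forces $\beta'=0$ at every point of $L$. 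Feeding this into the order-two term (where $\beta_2(m)$ denotes the coefficient of $t^2$ in $b(m-t,m+t)$) and restricting to $L$, the $\beta'$-terms drop and one is left, after setting $q\propto\beta_2$, with the relation
$$S(\rho(\gamma))=q-(q\circ\rho(\gamma))\,(\rho(\gamma)')^2\qquad\text{on }L.$$
This is exactly the statement that $S(\rho(\gamma))$ is a Schwarzian \emph{coboundary} on $L$, with the same cocycle $q$ for every $\gamma$, which is the conceptual heart of the argument.

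To finish I would use the Schwarzian cocycle identity $S(h\circ\rho(\gamma)\circ h^{-1})\circ h\cdot (h')^2=S(\rho(\gamma))+(S(h)\circ\rho(\gamma))(\rho(\gamma)')^2-S(h)$: it shows that $S(h\circ\rho(\gamma)\circ h^{-1})$ vanishes on $h(L)=L_{h\rho(\Gamma)h^{-1}}$ as soon as $S(h)=q$ on $L$. Thus it remains to produce $h\in\Diff(\Ss^1)$ with $S(h)\big|_{L}=q\big|_{L}$. Since solving $S(h)=\tilde q$ is equivalent to forming the ratio of two solutions of the linear equation $u''+\tfrac12\tilde q\,u=0$, the only obstruction to obtaining a genuine circle diffeomorphism is the $\PSL(2,\R)$-monodromy around $\Ss^1$; here I would exploit that $L\ne\Ss^1$, modifying $q$ arbitrarily on one complementary gap interval to a smooth $\tilde q$ agreeing with $q$ on $L$ and having trivial monodromy, which yields the desired $h$ (automatically an immersion, hence a diffeomorphism, since the Wronskian is a nonzero constant).

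I expect two steps to be the real obstacles. The first is the regularity analysis near the diagonal: justifying that $\omega$ has an exact order-two pole with a \emph{smooth} coefficient $b$ up to the diagonal (so that $\beta$ and $\beta_2$ exist and are $C^1$), rather than merely the correct leading order, since the whole argument consumes third-order information. The second is the circle-level realization of the Schwarzian: controlling the monodromy of $u''+\tfrac12\tilde q\,u=0$ by a gap modification so that the local projective solution descends to an element of $\Diff(\Ss^1)$. The coboundary identity and the expansion of $F_\gamma$ are then routine, and the minimal case is subsumed by Theorem \ref{transitive}.
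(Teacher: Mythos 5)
Your proposal rests on an analytic foundation that the hypotheses do not provide: you write $\omega=\frac{b(x,y)}{(x-y)^2}\,dx\wedge dy$ and then consume the values $\beta=b_{|\Delta}$, $\beta'$ and the second transverse Taylor coefficient $\beta_2$ of $b$ \emph{at the diagonal}. But the theorem only assumes a smooth invariant volume form on the open manifold $\mathcal C=\Ss^1\times\Ss^1\setminus\Delta$; there is no control whatsoever on its behaviour as $(x,y)\to\Delta$. The pole need not have order $2$ (the paper's parabolic example $\vert x^n-y^n\vert^{-1-1/n}dx\wedge dy$ has order $1+1/n$ at generic diagonal points), and even where a single hyperbolic element forces the leading order to be $2$ at its fixed point $(N,N)$, the linearized dynamics there is a homothety $(x,y)\mapsto(\lambda x,\lambda y)$, under which any function of the angle is invariant: invariance alone does not make $b$ extend continuously, let alone $C^3$, across $\Delta$. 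Moreover, in the gaps $I\times I$ the invariant forms constructed in Theorem \ref{non_Fuchsian} are essentially arbitrary on a fundamental domain, so an invariant form can genuinely fail to have your normal form near the diagonal. Since your order-zero, order-one and order-two identities (constancy of $\beta$ on $L$, vanishing of $\beta'$ on $L$, and the coboundary relation $S(\rho(\gamma))=q-(q\circ\rho(\gamma))(\rho(\gamma)')^2$ with $q\propto\beta_2$) all live on $\Delta$, the argument collapses without this regularity; you flag it as an ``obstacle,'' but it is the whole difficulty, and proving it would require a separate boundary-regularity analysis at least as hard as the theorem itself. Your cross-ratio expansion $F_\gamma(x,x+s)=1+\tfrac16 S(\rho(\gamma))(x)s^2+O(s^3)$ and the Schwarzian cocycle bookkeeping are correct, and the final monodromy adjustment on a gap is plausible (though you would also need the developing map to have degree one, not merely trivial holonomy), but these do not repair the foundational gap.

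By contrast, the paper never goes near the diagonal: it extracts the third-order information from the Lorentz metric $\omega\,dxdy$ on $\mathcal C$ itself, first proving that the Schwarzian derivative $k_u(t)$ of the holonomy maps $H^u_t$ between horizontal circles vanishes at points of $L_{\rho(\Gamma)}\times L_{\rho(\Gamma)}\setminus\Delta$ (via the invariance $K_{\gamma.u}(t)=\gamma_*K_u(t)$, the scaling relation $K_{au}(t)=K_u(at)$ at hyperbolic fixed points, and the composition rule $H^u_{t+s}=H^{\alpha_u'(t)}_s\circ H^u_t$), and then transporting the projective structure of a single horizontal geodesic based in a complementary gap to a projective structure on $\Ss^1$. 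If you want to pursue your diagonal approach, you would first have to prove a Whitney-regularity statement for $(x-y)^2\omega$ along $\Delta\cap(L\times L)$, which is not available here.
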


\subsubsection{Spectrally Möbius-like groups}
In the case of a single hyperbolic diffeomorphism, preserving a volume form on $\mathcal C$ is equivalent to a condition on the derivatives at the fixed points. We can ask ourselves if it is also the case for more complicated groups.\\
\indent So far, it seems that spectrally Möbius-like is the weakest of all the properties defined above. However, for a group generated by a hyperbolic diffeomorphism, it is equivalent to being area-preserving. A natural question is to ask wether it is true for all group actions.
\begin{conj}\label{conj_spectral} If $\rho:\Gamma \to \Diff(\Ss^1)$ is topologically Fuchsian and  spectrally Möbius-like, is it area-preserving? \end{conj}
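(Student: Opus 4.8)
The plan is to translate area-preservation into a cohomological equation and to read off the spectral condition as its obstruction at fixed points. Writing a candidate volume form as $\omega=\lambda(x,y)\,dx\wedge dy$ with $\lambda>0$ smooth on $\mathcal C$, invariance under the diagonal action $\Phi_\gamma=\rho(\gamma)\times\rho(\gamma)$ reads
\[
\lambda(\rho(\gamma)x,\rho(\gamma)y)\,\rho(\gamma)'(x)\,\rho(\gamma)'(y)=\lambda(x,y),
\]
so that $u=\log\lambda$ must solve the coboundary equation $u-u\circ\Phi_\gamma=b_\gamma$, where $b_\gamma(x,y)=\log\rho(\gamma)'(x)+\log\rho(\gamma)'(y)$ is the additive derivative cocycle. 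Evaluating this identity at a fixed point $(N,S)$ of $\Phi_\gamma$ forces $\rho(\gamma)'(N)\rho(\gamma)'(S)=1$: thus the spectrally Möbius-like hypothesis is exactly the vanishing of the local obstruction to solvability at the fixed points of the action on $\mathcal C$. The problem becomes to find a smooth $u$ on $\mathcal C$ realizing these coboundaries and blowing up like $-2\log|x-y|$ along the diagonal $\Delta$.

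Before constructing anything I would delimit the elliptic case, which already shows that some hypothesis beyond the literal statement is needed. If $\Gamma=\Z$ and $\rho(1)$ has an irrational, sufficiently Liouville, rotation number, then it is fixed-point free and vacuously spectrally Möbius-like; being topologically conjugate to an elliptic element of $\PSL(2,\R)$ it is topologically Fuchsian; yet by Proposition \ref{elliptic} it is area-preserving only if it is smoothly conjugate to a rotation, which fails by Herman's theory. Hence the statement should be read with hyperbolic elements present (for instance $\rho$ non elementary, or a Diophantine condition imposed on any elliptic part), and the substance of the question lies in the case where the fixed points of hyperbolic elements are dense in the limit set.

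For that case I would reduce to results already proved rather than solve the coboundary equation directly: I would try to show that such a $\rho$, whose Fuchsian model $\rho_0$ is convex cocompact, is $L$-differentially Fuchsian, and then invoke Theorem \ref{area_preserving}. Let $h\in\Homeo(\Ss^1)$ conjugate $\rho$ to $\rho_0$. At each fixed pair $(N,S)$ of a hyperbolic $\gamma$ the spectral condition says the diagonal derivative is $\mathrm{diag}(\rho(\gamma)'(N),\rho(\gamma)'(S))$ with determinant $1$, which is precisely the consistency needed for $h$ to be differentiable with the Möbius multipliers at these points, which are countable and dense in $L_{\rho(\Gamma)}$. The aim would be to show that this necessary condition is also sufficient to upgrade $h$ to a Whitney-$C^1$, then $C^\infty$, diffeomorphism along the Cantor set $L_{\rho(\Gamma)}$, combining the contraction of the convergence dynamics near fixed points with bounded-distortion estimates on the complementary gaps; the infinitesimal rigidity of Theorem \ref{rigidity_schwarzian} suggests that the higher-order distortion on $L$ is already controlled.

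The main obstacle is precisely this last upgrade: turning the spectral equalities, which only constrain $h$ at a countable set, into genuine differentiability of $h$ along the whole Cantor set with continuously varying derivative, while controlling the behaviour on the wandering intervals filling the complement of $L$. Unlike the situation of Theorem \ref{non_Fuchsian}, where $h$ is taken to be the identity on $L$ and a flow-box construction is available, here we have no a priori regularity off the fixed points, and the cocycle $b_\gamma$ must be tamed uniformly over the group. I expect that without additional input—a bounded-geometry or Diophantine hypothesis on the limit set, or a strengthening of Theorem \ref{rigidity_schwarzian} to first-order matching—this step may genuinely fail, which would be consistent with the problem being posed as an open question.
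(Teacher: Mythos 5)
This statement is posed in the paper as a \emph{Question} (the \texttt{conj} environment prints as ``Question''), so there is no proof of it in the paper to compare against; you are right not to claim one. Two things in your discussion deserve comment. First, your elliptic observation is a genuine and valid point against the literal statement: a $C^\infty$ diffeomorphism with a Liouville, non-linearizable rotation number is topologically Fuchsian (Denjoy), vacuously spectrally Möbius-like, and not area-preserving by Proposition \ref{elliptic} combined with Herman--Yoccoz theory. The paper does not flag this, and the question is clearly meant in the presence of hyperbolic elements, but your caveat is correct and worth making explicit. Second, the paper's only partial answer, Theorem \ref{spectrally}, takes a different route from the one you sketch: rather than trying to upgrade the topological conjugacy $h$ to a Whitney diffeomorphism of the limit set and then invoking Theorem \ref{area_preserving}, it builds the flow $\psi^t$ associated to $\rho_1$, uses the $C^1$-closeness to $\rho_0$ to guarantee that $\Omega_\psi$ is a hyperbolic set, observes that the spectral condition is \emph{exactly} the vanishing of the Jacobian cocycle over periodic orbits, and applies Livšic's theorem (with the Whitney-regularity version of Nicol--Török) to solve the coboundary equation on $\Omega_\psi$; the extension to all of $\mathcal C$ is then done by the machinery of Lemma \ref{vertical} and Propositions \ref{continuous} and \ref{smooth}. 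This buys an invariant volume form \emph{without} ever producing a differentiable conjugacy on $L_{\rho(\Gamma)}$, which is why it succeeds where your reduction stalls.

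The concrete gap in your proposal is the step you yourself identify: passing from multiplier equalities at the countable dense set of fixed pairs to Whitney $C^1$ (let alone $C^\infty$) regularity of $h$ along the Cantor set. This is not merely technical; the paper's closing discussion after Theorem \ref{spectrally} points out that even equality of the full spectrum does not obviously yield an $L$-differentiable conjugacy (the analogue of the periodic-data problem for Anosov systems), so your intermediate claim ``spectrally Möbius-like $\Rightarrow$ $L$-differentially Fuchsian'' is itself an open strengthening of Question \ref{conj_L_diff} rather than a lemma one can expect to prove en route. Moreover, Theorem \ref{rigidity_schwarzian} constrains third-order data of the group elements on the limit set, not the regularity of the conjugacy $h$, so it does not supply the missing first-order control. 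In short: your cohomological reformulation and your identification of the spectral condition as the fixed-point obstruction match the paper's point of view, but the path you propose requires an input (differentiability of $h$ on $L_{\rho(\Gamma)}$) that is at least as hard as the question itself, whereas the paper's partial result circumvents it via Livšic at the cost of a $C^1$-smallness hypothesis.
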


Note that  even though they seem to be indicating different directions, there is no obvious contradiction between this  statement and Question \ref{conj_L_diff} (i.e. we can ask wether spectrally Möbius-like actions are $L$-differentially Fuchsian).\\
\indent We will see that there is a positive answer to Question \ref{conj_spectral} for actions close to Fuchsian actions. For convenience, we will only treat the case of free groups.

\begin{theo} \label{spectrally} Let $\rho_0: \mathbb F_n \to \PSL(2,\R)$ be a convex cocompact representation. If $\rho_1: \mathbb F_n \to \Diff(\Ss^1)$ is sufficiently $C^1$-close to $\rho_0$, and if $\rho_1$ is spectrally Möbius-like, then $\rho_1$ is area-preserving. \end{theo}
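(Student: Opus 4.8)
The plan is to reduce the statement to a cohomological (coboundary) equation for the logarithmic Jacobian of the diagonal action, and to solve it by a Livšic-type argument whose only obstruction is exactly the datum controlled by the spectrally Möbius-like hypothesis. Writing a candidate $\rho_1$-invariant volume form as $\omega = e^{u(x,y)}\,dx\wedge dy$, invariance is equivalent to
\[
u(\rho_1(\gamma)x,\rho_1(\gamma)y)-u(x,y) = -\log\rho_1(\gamma)'(x)-\log\rho_1(\gamma)'(y)
\]
for all $\gamma\in\mathbb F_n$, so that producing $\omega$ amounts to trivializing, as a smooth coboundary over the diagonal action on $\mathcal{C}$, the cocycle $c(\gamma,(x,y))=\log\rho_1(\gamma)'(x)+\log\rho_1(\gamma)'(y)$. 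I would realize this dynamically through the geodesic-flow construction of Section \ref{sec:flow}: to $\rho_1$ one attaches a flow $\phi^1$ on a $3$-manifold $M_1$ for which $\rho_1$-invariant volume forms on $\mathcal{C}$ correspond bijectively to $\phi^1$-invariant smooth volumes on $M_1$. Indeed, in the trivialization $\mathcal{C}\times\R$ a flow-invariant volume is forced to be of product form $\omega\wedge dt$, and $\phi^1$-invariance then translates into $\rho_1$-invariance of $\omega$ on $\mathcal{C}$.

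Since $\rho_0$ is convex cocompact and $\rho_1$ is $C^1$-close to it, the flow $\phi^1$ is a small perturbation of the geodesic flow $\phi^0$ of the convex cocompact surface $\mathbb{H}^2/\rho_0(\mathbb F_n)$. By structural stability of hyperbolic basic sets, $\phi^1$ restricted to its non-wandering set $\Omega_1$ is a topologically transitive Axiom A basic set (for $n=1$ the non-wandering set is a single periodic orbit and the statement is Proposition \ref{hyperbolic}), and its periodic orbits are in order-preserving bijection with the closed geodesics, hence with the conjugacy classes of $\mathbb F_n$. The periodic orbit attached to $\gamma$ corresponds to the fixed pair $(N_\gamma,S_\gamma)$, and the transverse Jacobian of its return map over one period is, up to the standard identification, the product of the stable and unstable multipliers, namely $\rho_1(\gamma)'(N_\gamma)$ and $\rho_1(\gamma)'(S_\gamma)$.

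This is where the hypothesis enters: the spectrally Möbius-like condition gives $\rho_1(\gamma)'(N_\gamma)\rho_1(\gamma)'(S_\gamma)=1$ for every such $\gamma$, so the integral of the divergence of $\phi^1$ (equivalently the periodic sum of $c$) over each periodic orbit of $\Omega_1$ vanishes. The Livšic theorem for transitive hyperbolic basic sets then furnishes a continuous solution $u$ of the coboundary equation along $\Omega_1$; and since $\rho_1\in\Diff(\Ss^1)$ makes $\phi^1$ and its divergence $C^\infty$, the smooth Livšic regularity theory (de la Llave–Marco–Moriyón) upgrades $u$ to a $C^\infty$ solution along $\Omega_1$. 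Equivalently, one obtains the invariant form $\omega$ along the recurrent pairs $L_{\rho_1(\mathbb F_n)}\times L_{\rho_1(\mathbb F_n)}\setminus\Delta$.

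The remaining task — extending $\omega$ from the nowhere-dense non-wandering set to all of $\mathcal{C}$ while keeping invariance and smoothness — is where I expect the main difficulty, and it is precisely the analysis carried out for Theorem \ref{non_Fuchsian} in Section \ref{sec:non_fuchsian}. On the wandering part of $M_1$ (the funnels, i.e. pairs with at least one endpoint outside the limit set) the flow has no recurrence, so the cohomological equation can be integrated freely along each orbit starting from an arbitrary cross-section; the real work is to choose these initial data so that the resulting density glues smoothly to the solution already built on $\Omega_1$. This matching across the frontier of the non-wandering set is delicate, which is exactly why a $C^2$ control of the volume is the natural threshold in the transitive regime, but the $C^1$-closeness to the smooth Fuchsian model keeps the relevant distortion estimates uniform and lets me reuse the extension scheme of Section \ref{sec:non_fuchsian}. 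This produces a global smooth $\rho_1$-invariant volume form on $\mathcal{C}$, so $\rho_1$ is area-preserving.
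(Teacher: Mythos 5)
Your proposal follows the same route as the paper's proof: encode the invariant volume as a coboundary for the flow on the $3$-manifold built from $\rho_1$, use $C^1$-structural stability of the (convex cocompact) geodesic flow to get hyperbolicity of the non-wandering set, observe that the periodic obstruction is exactly $\log\bigl(\rho_1(\gamma)'(N_\gamma)\rho_1(\gamma)'(S_\gamma)\bigr)=0$ by the spectrally Möbius-like hypothesis, apply Livšic, and then extend off the non-wandering set by the machinery of Sections \ref{sec:flow} and \ref{sec:non_fuchsian}. Two points deserve flagging. First, a minor one: to invoke structural stability you need the perturbed flow and the geodesic flow to live on the same manifold; the paper arranges this explicitly by interpolating $\rho_u$ between $\rho_0$ and $\rho_1$ (possible because $\mathbb F_n$ is free) and trivializing the resulting fibration over $[0,1]$ near the non-wandering set, whereas you assert the comparison directly. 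Second, and more substantively: the regularity upgrade via de la Llave--Marco--Moriy\'on does not apply as stated, since their smooth Livšic theorem is for transitive \emph{Anosov} flows, i.e.\ when the hyperbolic set is the whole manifold. Here $\Omega_1$ is a proper basic set (transversally a Cantor set), so smoothness of the solution must be understood in the Whitney sense, and the only available result is Nicol--Török's regularity theorem for the coboundary equation on Cantor sets, proved for surface diffeomorphisms and adapted to flows on three-manifolds. Without this, the classical references only give a H\"older or $C^1$ solution on $\Omega_1$, which is not enough to feed into the $C^2$ extension scheme of Section \ref{sec:non_fuchsian}; this is precisely the point the paper spends its final paragraph on.
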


Note that the hypothesis that $\rho_0$ is Fuchsian could be weakened by asking for $\rho_0$ to be $L$-differentially Fuchsian.\\
\indent For representations of surfaces groups, a theorem of Ghys in \cite{Gh92} (which preceded the result mentioned above) states that given $\rho_0 : \Gamma_g \to \PSL(2,\R)$ defined by a hyperbolic metric on the surface of genus $g$, any $C^1$-close representation $\rho_1 :\Gamma_g\to \Diff(\Ss^1)$ is differentially Fuchsian (notice that this does not mean that $\rho_1$ is differentially conjugate to $\rho_0$, but to another Fuchsian representation). In our context, we could ask if a representation $\rho_1:\Gamma \to \Diff(\Ss^1)$ that is spectrally Möbius-like and $C^1$-close to a convex cocompact representation $\rho_0\to \PSL(2,\R)$ is $L$-differentially Fuchsian. As in the case of surface groups, this does not mean that the existing topological conjugacy is a diffeomorphism between the limit sets. For this to be true, elements should have the same derivatives at their fixed points.\\
\indent Similarly, given $\rho_0,\rho_1:\Gamma \to \Diff(\Ss^1)$  such that $\rho_0$ is Fuchsian and that are topologically conjugate, if we assume that $\rho_0$ and $\rho_1$ have the same spectrum, are $\rho_1$ and $\rho_0$ $L$-differentially conjugate? In the context of hyperbolic dynamics, this is linked to understanding differentiable conjugacy by looking at the periodic data, i.e. the eigenvalues of the derivatives at periodic points (for Anosov diffeomorphisms of surfaces, the periodic date defines the system up to smooth conjugacy, see \cite{LMM88} and \cite{dlL}).

\subsection{Structure of the paper} We will start by studying topological conjugacy, then treat the elementary case (i.e. a single diffeomorphism). In section \ref{sec:tools}, we will introduce tools for the study of the non elementary case, mainly a notion of curvature associated to a smooth volume form on $\mathcal C$. The rigidity results concerning the non-elementary case, i.e. Theorem \ref{transitive}, Theorem \ref{analytic_non_elementary} and Theorem \ref{rigidity_schwarzian}, will be proved in section \ref{sec:rigidity}. Finally, we will prove Theorem \ref{non_Fuchsian} in sections \ref{sec:flow} and \ref{sec:non_fuchsian}, and Theorem \ref{spectrally} in section \ref{sec:spectrally}.

\section{Topological conjugacy} \label{sec:topological}

We deal with an action of a group $\Gamma$ on $\Ss^1$ and we wish to understand when it can preserve a measure on $\mathcal C=\Ss^1\times \Ss^1\setminus \Delta$. A result of Navas (Proposition 1.1 in  \cite{Navas_red}) states that for a certain type of measure, the action is topologically Fuchsian.

\begin{theo}[Navas] Let $\mu$ be a measure on $\mathcal C$ that is finite on compact sets, such that horizontal and vertical lines are negligible and  such that $\mu([a,b[\times ]b,c])=\infty$ for $a<b<c<a$ in $\Ss^1$. The group $\Gamma_{\mu}$ of circle homeomorphisms that preserve $\mu$ is topologically Fuchsian. \end{theo}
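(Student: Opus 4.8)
The plan is to build from $\mu$ a $\Gamma_\mu$-invariant cross-ratio on quadruples and use it to verify the convergence property, after which the Gabai--Casson--Jungreis theorem recalled in the introduction gives the conclusion. For four points in positive cyclic order $a,b,c,d$ I set $\beta(a,b,c,d)=\mu(\intoo ab\times\intoo cd)$. Since the closed arcs $\intff ab$ and $\intff cd$ are disjoint, the closure of this rectangle is a compact subset of $\mathcal C$, so $\beta$ is finite; and since every orientation preserving $\gamma\in\Gamma_\mu$ sends $\intoo ab\times\intoo cd$ to $\intoo{\gamma a}{\gamma b}\times\intoo{\gamma c}{\gamma d}$ and preserves $\mu$, the function $\beta$ is $\Gamma_\mu$-invariant. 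The hypothesis that horizontal and vertical lines are negligible makes $\beta$ additive in each of its interval variables and continuous in all four arguments, while the hypothesis $\mu(\intfo ab\times\intof bc)=\infty$ forces $\beta(a,x,b,c)\to\infty$ as $x\to b^{-}$, the two arcs becoming adjacent at the corner $(b,b)$.

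First I would turn $\beta$ into an equivariant coordinate. Fixing a positively oriented triple $\tau=(a,b,c)$, I define $F_\tau:\intoo ab\to\R$ by $F_\tau(x)=\beta(a,x,b,c)$. Using additivity and continuity of $\beta$, positivity of $\mu$ on rectangles, and the blow-up just mentioned, $F_\tau$ is a strictly increasing homeomorphism onto $\intoo0\infty$, and analogous coordinates are defined on $\intoo bc$ and $\intoo ca$. The decisive consequence of invariance is the identity $F_{\gamma\tau}(\gamma x)=F_\tau(x)$ for every orientation preserving $\gamma\in\Gamma_\mu$, so that on $\intoo ab$ one has $\gamma=F_{\gamma\tau}^{-1}\circ F_\tau$, and likewise on the other two arcs. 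Hence $\gamma$ is entirely determined by the single image triple $\gamma\tau$: the $\Gamma_\mu$-action on the space $\Theta$ of distinct positively oriented triples is free, and $\gamma$ is recovered from the pair $(\tau,\gamma\tau)$ by a reconstruction map $(\tau,\sigma)\mapsto g_{\tau,\sigma}$ that is jointly continuous from $\Theta\times\Theta$ into $\Homeo(\Ss^1)$, because $\mu$ is finite on compact sets and charges no line, so rectangle masses vary continuously with their corners.

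I would then deduce properness of the action on $\Theta$. Given $\gamma_n\in\Gamma_\mu$, $\tau_n\to\tau$ and $\gamma_n\tau_n\to\sigma$ with $\tau,\sigma\in\Theta$, the determinacy above gives $\gamma_n=g_{\tau_n,\gamma_n\tau_n}$, and joint continuity of the reconstruction yields $\gamma_n\to g_{\tau,\sigma}$ uniformly. As $\Gamma_\mu$ is closed in $\Homeo(\Ss^1)$ (a uniform limit of $\mu$-preserving homeomorphisms again preserves the Radon measure $\mu$), the limit lies in $\Gamma_\mu$ and maps $\tau$ to $\sigma$; this is precisely properness of the $\Gamma_\mu$-action on distinct triples. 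The convergence property then holds, and the quoted theorem of Gabai and Casson--Jungreis produces $h\in\Homeo(\Ss^1)$ with $h^{-1}\Gamma_\mu h\subset\PSL(2,\R)$, i.e. $\Gamma_\mu$ is topologically Fuchsian. An orientation reversing element, if present, is handled by passing to the index two orientation preserving subgroup, which is topologically Fuchsian, and checking that $h$ conjugates the whole group into $\PSL(2,\R)$ extended by the standard reflection.

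The step I expect to require the most care is showing that $F_\tau$ is genuinely a homeomorphism onto $\intoo0\infty$. Strict monotonicity needs $\mu$ to give positive mass to every rectangle with disjoint closed arcs; this is automatic when $\mu$ is a volume form (the setting in which the theorem will be applied) but in the stated generality one must either extract it from the three hypotheses or restrict the entire construction to the closed support of $\mu$, on which $\Gamma_\mu$ still acts and where the coordinates remain faithful. Continuity and coercivity of $F_\tau$, by contrast, follow cleanly from the negligibility of lines and from the infinite-measure hypothesis respectively, and the remaining measure-theoretic points, namely joint continuity of the reconstruction and closedness of $\Gamma_\mu$, are routine regularity arguments.
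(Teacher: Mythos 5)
This statement is quoted from \cite{Navas_red}: the paper itself gives no proof of it, and its own argument for the continuous analogue (Theorem \ref{topologic}) uses a different device, namely the invariant Riemannian metric on $\Theta_3(\Ss^1)$ built from the density $\omega$ together with Ascoli's theorem, which is unavailable for a general measure. Your idea of extracting an invariant ``cross-ratio'' $\beta$ from rectangle masses and reconstructing each $\gamma$ from the image of a single triple is the right kind of measure-theoretic substitute, and in the only case the present paper actually needs (a measure with continuous positive density) it does yield a complete proof by a genuinely different route.

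In the stated generality, however, the step you flag at the end is a genuine gap and not a technicality. The three hypotheses do not imply that $\mu$ charges every rectangle with disjoint closed arcs: take for $\mu$ the restriction of $\frac{4\,dx\wedge dy}{(x-y)^2}$ to $\{(x,y)\in\mathcal C \,:\, d(x,y)<\e\}$. This measure is finite on compact sets, gives no mass to horizontal or vertical lines, and still satisfies $\mu(\intfo{a}{b}\times\intof{b}{c})=\infty$ for every $a<b<c<a$, because the density blows up at the corner $(b,b)$; yet every rectangle whose two arcs are at distance at least $\e$ from each other is null. For such a $\mu$ the map $F_\tau$ is constant on nondegenerate subintervals, so $F_\tau^{-1}$ does not exist, the action on triples is not free, and $\gamma$ is genuinely not determined by $\gamma\tau$: the identity $F_{\gamma\tau}\circ\gamma=F_\tau$ says nothing about $\gamma$ inside a level interval of $F_\tau$. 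The reconstruction map $g_{\tau,\sigma}$, and with it your entire properness argument, collapses. Of your two suggested repairs, the first (deducing positivity from the three hypotheses) is impossible by the example above, and the second (restricting to the support of $\mu$) is not carried out and does not obviously control $\gamma$ on the intervals your coordinates cannot see. One can close the gap by arguing by contradiction without inverting anything: if the action on distinct triples were not proper, one produces intervals $I,J$ with disjoint closures and elements $\gamma_n\in\Gamma_\mu$ such that the rectangles $\gamma_n(I)\times\gamma_n(J)$ eventually contain compact rectangles exhausting a set of the form $\intfo{a}{b}\times\intof{b}{c}$, forcing the constant finite mass $\mu(I\times J)$ to be infinite. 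In such an argument your monotone coordinates remain useful, but only through their coercivity near the endpoints of the arcs (which does follow from the hypotheses), not through their injectivity.
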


Navas used this result in \cite{Navas} to show that infinite Kazhdan groups cannot act on the circle by $C^2$ diffeomorphisms. Theorem \ref{topologic} deals with measures that are absolutely continuous with respect to the Lebesgue measure with a continuous density. If $\omega$ is a volume form on $\mathcal C$, then we will denote by $\Gamma_{\omega}$ the group of circle homeomorphisms $f$ such that the map $(x,y)\mapsto (f(x),f(y))$ of $\mathcal C$ preserves the measure defined by $\omega$.\\
\indent  In order to prove Theorem \ref{topologic}, we have to show that $\Gamma_{\omega}$ is topologically Fuchsian when $\omega$ is continuous.

\begin{lemme} If $\omega$ is a continuous volume form, then $\Gamma_{\omega} \subset \emph\Diff(\Ss^1)$ \end{lemme}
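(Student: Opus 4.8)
The plan is to turn the measure-preservation condition into a one-variable functional equation whose solution is forced to be differentiable. Write $\omega=\phi(x,y)\,dx\wedge dy$ in local coordinates, where $\phi$ is continuous and nowhere vanishing on $\mathcal C$; I may assume $\phi>0$. Let $f\in\Gamma_\omega$, which I take orientation preserving (the other case is identical up to signs), and let $F=f\times f$ be the induced homeomorphism of $\mathcal C$. Fix a point $x_0\in\Ss^1$ at which I wish to establish differentiability, together with a closed arc $[c,d]$ disjoint from a fixed neighbourhood of $x_0$; then every box $[a,b]\times[c,d]$ with $[a,b]$ close to $x_0$ lies in $\mathcal C$, and $F([a,b]\times[c,d])=[f(a),f(b)]\times[f(c),f(d)]$. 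Since $F$ preserves $\mu$ and is a bijection, $\mu(F(B))=\mu(B)$ for every such box $B$, i.e.
$$\int_a^b\!\int_c^d \phi(x,y)\,dy\,dx=\int_{f(a)}^{f(b)}\!\int_{f(c)}^{f(d)}\phi(u,v)\,dv\,du.$$

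The next step is to integrate out the transverse variable. Set $g(t)=\int_c^d\phi(t,y)\,dy$ and $\tilde g(u)=\int_{f(c)}^{f(d)}\phi(u,v)\,dv$. Because $\phi$ is continuous and positive and the relevant arcs avoid the diagonal, both $g$ and $\tilde g$ are continuous and strictly positive on the arcs under consideration; choosing antiderivatives $G$ and $\tilde G$, these are $C^1$ functions with positive derivative, hence $C^1$ diffeomorphisms onto their images. The identity above then reads $G(b)-G(a)=\tilde G(f(b))-\tilde G(f(a))$ for all $a,b$ near $x_0$.

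Fixing $a=a_0$ gives $\tilde G(f(b))=G(b)+c_0$ with $c_0$ constant, so $f(b)=\tilde G^{-1}\big(G(b)+c_0\big)$ on a neighbourhood of $x_0$. As $\tilde G^{-1}$ is $C^1$ (its derivative equals $1/(\tilde g\circ\tilde G^{-1})$, continuous and nonzero) and $G$ is $C^1$, the map $f$ is $C^1$ near $x_0$, with $f'(b)=(\tilde G^{-1})'(G(b)+c_0)\,g(b)>0$ finite. Since $x_0$ is arbitrary and finitely many such charts cover $\Ss^1$, $f$ is a $C^1$ diffeomorphism; the same computation with a $C^k$ (resp. smooth) density gives $G,\tilde G\in C^{k+1}$ and hence $f\in C^{k+1}$ (resp. $C^\infty$), so the regularity of the elements matches that of $\omega$. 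The main obstacle is conceptual rather than computational: a priori $f$ is merely a homeomorphism and $\phi$ need not be differentiable, so no derivative is available to manipulate; the device that unlocks the proof is the replacement of $f'$ by the $C^1$ ``potentials'' $G,\tilde G$ obtained by integrating against a fixed transverse arc, which converts the infinitesimal statement into the algebraic identity above. The minor points to verify are that the chosen boxes stay inside $\mathcal C$ (guaranteed by taking $[c,d]$ disjoint from a neighbourhood of $x_0$) and the bookkeeping of signs when $f$ reverses orientation.
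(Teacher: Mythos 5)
Your proof is correct, but it follows a genuinely different route from the paper. The paper's argument first observes that since $(f,f)$ preserves a measure in the Lebesgue class on $\mathcal C$, the map $f$ is absolutely continuous on $\Ss^1$; it then invokes the almost-everywhere identity $\omega(f(x),f(y))f'(x)f'(y)=\omega(x,y)$ and upgrades the a.e.\ defined derivative to a continuous one. You instead integrate the invariance of $\mu$ over product boxes $[a,b]\times[c,d]$ against a fixed transverse arc, which yields the exact identity $G(b)-G(a)=\tilde G(f(b))-\tilde G(f(a))$ between $C^1$ primitives with nonvanishing derivatives, and then read off $f=\tilde G^{-1}\circ (G+c_0)$ locally. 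What your approach buys is that it is entirely elementary and self-contained: you never need to establish absolute continuity of $f$, nor to perform the a.e.-to-everywhere upgrade of $f'$, and the bootstrap to $C^{k+1}$ regularity for a $C^k$ density falls out immediately by differentiating $g$ and $\tilde g$ under the integral sign. What the paper's approach buys is brevity and the fact that it directly produces the pointwise relation $\omega(f(x),f(y))f'(x)f'(y)=\omega(x,y)$, which is the form of the invariance used repeatedly in the rest of the paper. The only points worth making explicit in your write-up are (i) that $F(B)\subset\mathcal C$ automatically, so $\tilde g$ is continuous and positive on the relevant arcs, and (ii) that $\mathcal C$ is connected, which justifies the reduction to $\phi>0$; both are routine.
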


\begin{proof} Since the map $(f,f)$ preserves a measure in the class of the Lebesgue measure on $\mathcal C$, it is absolutely continuous, and so is $f$ on $\Ss^1$. The derivative of $f$ satisfies the relation $\omega(f(x),f(y))f'(x)f'(y)=\omega(x,y)$ for almost every $x,y$, therefore $f'$ is continuous and $f$ is $C^1$. A bootstrap argument  shows that if $\omega$ is $C^k$ with $k\ge 0$, then $\Gamma_{\omega}\subset \Diff^{k+1}(\Ss^1)$.   \end{proof}

The fact that $\Gamma_\omega$ is a group of diffeomorphisms gives us a more practical definition:
$$\Gamma_\omega =\{ f\in \mathrm{Diff}(\Ss^1) \vert \forall x\ne y~ \omega(f(x),f(y))f'(x)f'(y)=\omega(x,y)\}$$

Finding a conjugacy between a topologically Fuchsian group $\Gamma \subset \Diff(\Ss^1)$ and a subgroup of $\PSL(2,\R)$ is a rather complicated exercise. But there is a characterization of topologically Fuchsian groups that does not require to find an explicit conjugacy.\\
\indent First, we define the set $\Theta_3(\Ss^1)$ of distinct triples:
$$\Theta_3(\Ss^1)=\{(x,y,z)\in (\Ss^1)^3 \vert x\ne y\ne z\ne x\}$$

\begin{defi}  A group $\Gamma \subset \Homeo(\Ss^1)$ is  a convergence group if the action on $\Gamma$ of the space of distinct triples $\Theta_3(\Ss^1)$ is proper (i.e. for all compact set $K\subset \Theta_3(\Ss^1)$, the set $\Gamma_K=\{ g\in \Gamma \vert g.K\cap K\ne \emptyset\}$ is relatively compact). \end{defi}

Note that the definition of the properness of an action depends on a topology on the group. Here, the two candidates are the topology of $\Homeo(\Ss^1)$ and the compact open topology of $\Homeo(\Theta_3(\Ss^1))$, which happen to be identical.\\
\indent There is another classical definition of convergence groups, based on the dynamics of sequences in $\Gamma$. Their equivalence is shown in \cite{bowditch}. The main result on convergence groups is the following, proved in \cite{Gabai} and \cite{CJ}.

\begin{theo} A convergence group $\Gamma \subset \emph\Homeo(\Ss^1)$ is topologically Fuchsian. \end{theo}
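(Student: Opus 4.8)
The plan is to prove the theorem by the strategy of Tukia, Gabai and Casson--Jungreis, whose heart is a translation of the dynamical convergence hypothesis into the language of $3$--manifold topology. First I would record the basic classification of elements: the properness of the action on $\Theta_3$ forces north--south dynamics, so that a nontrivial $g\in\Gamma$ is \emph{elliptic} (finite order, acting freely), \emph{parabolic} (one fixed point, attracting on both sides), or \emph{loxodromic} (an attracting and a repelling fixed point), exactly as in $\PSL(2,\R)$. I would then dispose of the \emph{elementary} cases, where the limit set $L_\Gamma$ (the accumulation set of an orbit in $\Theta_3$) has at most two points: such groups fix a point or a pair, are finite or virtually cyclic, and can be conjugated by hand into the rotation, parabolic, or diagonal subgroup of $\PSL(2,\R)$. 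Passing if necessary to the index--two subgroup preserving orientation, the remaining work is the \emph{non--elementary} case, where $L_\Gamma$ is perfect.

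The key geometric input is that the space $\Theta_3^+(\Ss^1)$ of positively ordered distinct triples is homeomorphic to $\PSL(2,\R)$, which acts on it simply transitively; thus $\Theta_3^+(\Ss^1)\cong\PSL(2,\R)\cong\R^2\times\Ss^1$, an open solid torus, and the convergence hypothesis says precisely that $\Gamma$ acts properly discontinuously on it. After replacing $\Gamma$ by a torsion--free finite--index subgroup — equivalently working with orbifolds to absorb the elliptic elements, whose finite cyclic groups one first shows are conjugate into $\mathrm{SO}(2)$ — the action is free, so the quotient $M=\Theta_3^+(\Ss^1)/\Gamma$ is an aspherical $3$--manifold with $\pi_1(M)\cong\Gamma$. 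Since $\pi_1(\Theta_3^+(\Ss^1))\cong\Z$, lifting the action to the universal cover $\R^3$ yields a central extension
\[
1\longrightarrow \Z \longrightarrow \widetilde{\Gamma}\longrightarrow \Gamma \longrightarrow 1,
\]
the $\Z$ being the deck group; hence $\pi_1$ of the relevant closed (or, when $L_\Gamma\neq\Ss^1$, compactified) model of $M$ contains an infinite cyclic \emph{central}, in particular normal, subgroup.

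This is exactly the hypothesis of the Seifert Fibered Space Conjecture, and verifying it is the crux: one must show that a compact orientable irreducible $3$--manifold whose infinite fundamental group contains an infinite cyclic normal subgroup is Seifert fibered. This deep step is carried out by Gabai via the theory of essential laminations and finite--depth foliations (an essential lamination on such a manifold is forced to be the fibration), and independently by Casson--Jungreis by a direct analysis of the central $\Z$; I would cite their resolution here rather than reconstruct it. Granting it, $M$ is Seifert fibered with $S^1$--fiber matching the central $\Z$, so its base is a $2$--orbifold $\mathcal O$, and asphericity together with non--elementarity forces $\mathcal O$ to be hyperbolic, i.e.\ $\mathcal O=\Gamma_0\backslash\h^2$ for a Fuchsian group $\Gamma_0$. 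The Seifert fibration then provides a $\Gamma$--equivariant identification of $\Theta_3^+(\Ss^1)$ with the triple space of the action of $\Gamma_0\subset\PSL(2,\R)$ on $\partial\h^2\cong\Ss^1$; this descends to a $\Gamma$--equivariant homeomorphism of the two circles, which is the desired topological conjugacy $h\in\Homeo(\Ss^1)$.

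The main obstacle is unquestionably the Seifert Fibered Space Conjecture of the third paragraph: the reductions in the first two paragraphs are dynamically elementary (north--south dynamics, properness, and covering--space theory), whereas the passage from ``infinite cyclic normal subgroup'' to ``Seifert fibered'' requires genuine $3$--manifold technology. Secondary technical points I would handle carefully are the treatment of torsion (showing finite subgroups are conjugate to rotation groups, so the orbifold quotient is geometric) and the non--compactness of $M$ when $L_\Gamma\neq\Ss^1$, where one compactifies using the domain of discontinuity to obtain a manifold with boundary before invoking the Seifert machinery.
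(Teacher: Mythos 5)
The paper offers no proof of this statement at all: it is quoted as the Gabai and Casson--Jungreis theorem, so your sketch must be measured against the actual arguments in those papers, and there your central step inverts the logical order of the literature. You propose to deduce the convergence theorem from the Seifert Fibered Space Conjecture and to ``cite the resolution'' of the latter by Gabai and Casson--Jungreis. But in their papers the implication runs the other way: they prove the convergence theorem \emph{directly}, by a hands-on analysis of the action on the circle and on the triple space (completing Tukia's program, which had already settled large classes of cases), and the Seifert conjecture for closed manifolds is then \emph{deduced} from it via the reductions of Mess and Scott (the quotient $\pi_1/\Z$ is coarsely planar, hence acts on $\Ss^1$ as a convergence group; the convergence theorem makes it virtually Fuchsian; Scott's work then yields the fibration). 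Before 1992 the Seifert conjecture was known only for Haken manifolds; for closed, possibly non-Haken manifolds --- exactly the case your argument needs when the action on triples is cocompact --- the only proof available was the one routed through the convergence theorem itself. As written, your citation is therefore circular (one could salvage the direction you propose only anachronistically, through geometrization). The attribution is also off: Gabai's convergence paper is not an application of his essential-lamination machinery to a $3$-manifold statement; it is a direct dynamical argument about the circle action.

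Even granting the Seifert conjecture as a black box, several steps you label secondary are genuine gaps. First, the reduction to a torsion-free finite-index subgroup is unjustified: an abstract convergence group is not known a priori to be virtually torsion-free (there is no Selberg lemma here), and handling elliptic elements was one of the real difficulties in Tukia--Gabai--Casson--Jungreis; moreover, even with a torsion-free finite-index Fuchsian subgroup in hand, conjugating the \emph{whole} group requires extension arguments of Nielsen-realization type, not just passage to finite index. Second, your final step --- from a Seifert fibration on the quotient $M$ to a $\Gamma$-equivariant identification of triple spaces and hence to $h\in\Homeo(\Ss^1)$ --- is a jump: the fibration gives an abstract isomorphism $\Gamma\cong\Gamma_0\subset\PSL(2,\R)$, and upgrading an isomorphism to a topological conjugacy of the given boundary action needs a rigidity statement for uniform convergence actions that is comparable in depth to the theorem being proved. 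Third, note that the paper's definition of properness only asks that the sets $\Gamma_K$ be \emph{relatively compact}, not finite, so $\Gamma$ need not be discrete --- and the paper's application (the group $\Gamma_\omega$ preserving a Riemannian metric on $\Theta_3(\Ss^1)$, with properness obtained via Ascoli) genuinely produces non-discrete groups such as conjugates of $\PSL(2,\R)$ itself. Your sketch treats only properly discontinuous actions, so the non-discrete case, classically handled by local compactness and Montgomery--Zippin type arguments, is missing entirely.
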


\begin{proof}[Proof of Theorem \ref{topologic}] Let $h$ be the Riemannian metric on $\Theta_3(\Ss^1)$ defined by:

$$h_{(x,y,z)} = \frac{\omega(x,y)\omega(x,z)}{\omega(y,z)} dx^2 + \frac{\omega(y,z)\omega(y,x)}{\omega(z,x)} dy^2 + \frac{\omega(z,x)\omega(z,y)}{\omega(x,y)} dz^2$$
\indent It is a Riemannian metric on $\Theta_3(\Ss^1)$ that is preserved by the action of $\Gamma_\omega$. This implies that this action is proper (it is a straightforward consequence of Ascoli's Theorem), therefore $\Gamma_\omega$ is a convergence group, and is topologically Fuchsian.

  \end{proof}

\section{The elementary case}

In this section, we study the problem of differentiable conjugacy for a single diffeomorphism preserving a volume form on $\mathcal C$. Because such an element is topologically conjugate to an element of $\PSL(2,\R)$, we know that if it fixes at least three points, then it is the identity (this could actually be proved directly, without using the result for any group preserving a volume form on $\mathcal C$). We will study separately diffeomorphisms with a different number of fixed points. This corresponds to the classification of elements in $\PSL(2,\R)$: elliptic (no fixed point), parabolic (one fixed point) or hyperbolic (two fixed points).

\subsection{The elliptic case}

\indent We first look at the elliptic case, i.e.  fixed point free diffeomorphisms. All elliptic elements of $\PSL(2,\R)$ are conjugate (in $\PSL(2,\R)$, hence in $\Diff(\Ss^1)$) to rotations. The problem of knowing when a diffeomorphism topologically conjugate to a rotation is differentially conjugate to it has been studied deeply. There are examples for which a smooth conjugacy does not exist (including some irrational rotation numbers), however Herman proved that a smooth conjugacy exists when the rotation number lies in a set of full Lebesgue measure (\cite{Herman} discusses the general problem of differential conjugacy with a rotation). Luckily for us, the volume preserving case is much more simple.\\

\noindent \textbf{Proposition \ref{elliptic}.} \emph{Let $\p$ be a fixed point free diffeomorphism of $\Ss^1$. If it preserves a $C^k$ volume form on $\mathcal C$, then it is $C^{k+1}$ conjugate to a rotation. }

\begin{proof} Let $\omega$ be a volume form on $\mathcal C$ preserved by $\p$. We can define a Riemannian metric on $\Ss^1$ by $\Vert h\Vert_x^2 = \omega(x,\p(x)) \p'(x) h^2$. It is preserved by $\p$, therefore $\p$   is differentially conjugate to a rotation (because all  $C^k$ Riemannian metrics on the circle are $C^{k+1}$ homothetic to the euclidian metric whose isometries are rotations).
\end{proof}

Note that the Riemannian metric that we used can be seen as the restriction of the Lorentzian metric $\omega(x,y)dxdy$ on $\mathcal C$ to the graph of $\p$.

\subsection{The parabolic case}

We now deal with a diffeomorphism $\p$ that has exactly one fixed point $x_0\in \Ss^1$. Unlike the elliptic case, we will see that there is no rigidity. We can start by observing that the proof of the elliptic case does not apply here: the graph of $\p$ is not included in $\mathcal C$, therefore the  Riemannian metric that we used is only defined on $\Ss^1\setminus\{x_0\}$ and it only gives a conjugacy on $\Ss^1\setminus\{x_0\}$ with a translation of the real line, which only extends to a continuous conjugacy on  $\Ss^1$ with a parabolic element of $\PSL(2,\R)$, but this conjugacy is (in general) not smooth.\\
\indent There are immediate counter examples to differential conjugacy: we can  consider the family of  diffeomorphisms  $\p(x)=x(1+x^n)^{-\frac{1}{n}}$  (for $n$ odd) of $\R\proj^1=\R\cup \{\infty\}$. A preserved volume form   is given by $\vert x^n-y^n\vert^{-1-\frac{1}{n}}\,dx\wedge dy $. For $n\ne 1$, these diffeomorphisms are not differentially conjugate to an element of $\PSL(2,\R)$.\\
\indent However, all diffeomorphisms with one fixed point do not preserve a volume form on $\mathcal C$.

\begin{prop} \label{does_not_preserve} We see $\Ss^1$ as $\R\cup \{\infty\}$. Let $f\in \Diff(\Ss^1)$ be such that:
\begin{enumerate} \item $\emph{\textrm{Fix}}(f)=\{0\}$ \item $\forall x\in \intof{0}{1} ~ f(x)=(\emph{\textrm{Log}}(1+e^{\,x^{-2}}))^{-\frac{1}{2}}$ \item $\forall x\in \intfo{-1}{0} ~ f(x)=-(\emph{\textrm{Log}} (1+e^{\,x^{-4}}))^{-\frac{1}{4}}$ \end{enumerate} Then $f$ does not preserve any continuous volume form on $\mathcal C$.  \end{prop}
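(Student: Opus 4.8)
The plan is to exploit the explicit mismatch between the two one-sided germs of $f$ at its fixed point through the transformation rule $\omega(f(x),f(y))f'(x)f'(y)=\omega(x,y)$ recorded in Section~\ref{sec:topological}. First I would record the local picture at $0$. Both formulas show that $f$ is tangent to the identity with super-flat contact: a short expansion gives $f(x)=x-\tfrac12 x^3 e^{-1/x^2}+\dots$ for $x\to 0^+$ and $f(x)=x\mp\tfrac14 x^5 e^{-1/x^4}+\dots$ for $x\to 0^-$, the corrections being flat. Since $\mathrm{Fix}(f)=\{0\}$, the fixed point is semi-stable: $f$ is a contraction on one side and a dilation on the other. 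On the contracting side I pick a small endpoint and set $I_n=[f^{n+1},f^{n}]$ of that endpoint; on the dilating side I use backward iterates and set $J_m=[f^{-(m+1)},f^{-m}]$ of a small negative endpoint. These tile one-sided neighborhoods of $0$, with $f(I_n)=I_{n+1}$ and $f(J_m)=J_{m-1}$. Which physical side carries the exponent $1/x^2$ and which carries $1/x^4$ is immaterial, since only the flatness of each germ will enter the length estimates.

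The core of the argument is a rigidity constraint coming from invariance. Write $\mu$ for the measure of $\omega$. The rectangles $I_n\times J_m$ lie in $\mathcal C$ (they straddle $0$, hence avoid $\Delta$), and $f(I_n\times J_m)=I_{n+1}\times J_{m-1}$, so $\mu(I_n\times J_m)$ is invariant along $(n,m)\mapsto(n+1,m-1)$ and depends only on $n+m$. Iterating this identity $N$ times gives the exact equality $\mu(I_N\times J_0)=\mu(I_0\times J_N)$. Now $I_0$ and $J_0$ are fixed intervals bounded away from $0$, so $\{0\}\times J_0$ and $I_0\times\{0\}$ are compact subsets of $\mathcal C$; writing $\omega=\rho\,dx\wedge dy$ with $\rho$ continuous and positive on $\mathcal C$ and applying the mean value theorem, I obtain $\mu(I_N\times J_0)\sim c_R\,|I_N|$ and $\mu(I_0\times J_N)\sim c_L\,|J_N|$ as $N\to\infty$, where $c_R=\int_{J_0}\rho(0,y)\,dy$ and $c_L=\int_{I_0}\rho(x,0)\,dx$ are finite and positive. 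Equating forces $|I_N|\asymp|J_N|$, that is $|I_N|/|J_N|\to c_L/c_R\in(0,\infty)$.

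It then remains to compute the two lengths and see that this fails. Comparing the orbit recursion $x_n-x_{n+1}=\tfrac12 x_n^3 e^{-1/x_n^2}(1+o(1))$ with the integral $\int dx/(\tfrac12 x^3 e^{-1/x^2})=e^{1/x^2}+\dots$ gives $e^{1/x_n^2}\sim n$, whence $x_n\sim(\ln n)^{-1/2}$ and $|I_n|=x_n-x_{n+1}\sim \tfrac{1}{2n}(\ln n)^{-3/2}$. The identical computation on the other side, now with the exponent $1/x^4$, gives $|y_m|\sim(\ln m)^{-1/4}$ and $|J_m|\sim\tfrac{1}{4m}(\ln m)^{-5/4}$. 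Hence $|I_N|/|J_N|\sim 2(\ln N)^{-1/4}\to 0$, contradicting $|I_N|/|J_N|\to c_L/c_R>0$. This contradiction shows that no continuous $\omega$ can be $f$-invariant.

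The hard part will be the last step: turning the heuristic flow/Fatou-coordinate asymptotics into rigorous estimates for the iterates of the actual diffeomorphism, i.e. controlling the discrepancy between $f$ and the time-one map of the vector field governing each germ and checking that the $(1+o(1))$ factors do not alter the power of $\ln N$. The different polynomial prefactors $x^3$ and $x^5$, together with the different exponents $1/x^2$ and $1/x^4$, are exactly what make the two powers $(\ln N)^{-3/2}$ and $(\ln N)^{-5/4}$ distinct; matching exponents would instead produce comparable lengths and no obstruction. A secondary technical point is to make the continuity estimates of $\mu$ on the fixed strips $I_0\times\{0\}$ and $\{0\}\times J_0$ uniform enough that the asymptotic equivalences above are genuine.
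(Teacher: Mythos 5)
Your proof is correct, and it takes a route that differs in form from the paper's, though the underlying obstruction is the same. The paper fixes $x_n\to x\ne 0$ on one side and $y_n\to 0$ on the other with $f^n(y_n)\to v$, and plays the pointwise identity $(f^n)'(x_n)(f^n)'(y_n)=\omega(x_n,y_n)/\omega(f^n(x_n),f^n(y_n))$ (which must converge to $\omega(x,0)/\omega(0,v)\in\intoo{0}{+\infty}$) against an explicit computation showing the derivative product decays like $(\mathrm{Log}\,n)^{-1/4}$. You integrate instead: the identity $\mu(I_N\times J_0)=\mu(I_0\times J_N)$ over fundamental-domain rectangles, plus continuity and positivity of the density on the compact sets $\{0\}\times J_0$ and $I_0\times\{0\}$, forces $|I_N|\asymp|J_N|$, while the length asymptotics give $|I_N|/|J_N|\sim 2(\ln N)^{-1/4}\to 0$. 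Since $|I_N|=(f^N)'(\xi)|I_0|$ and $|J_N|=|J_0|/(f^N)'(\eta)$ with $\eta\to 0$, your ratio of lengths is precisely an averaged form of the paper's derivative product, and the exponent $-1/4=-3/2+5/4$ is the same mismatch. What your version buys is that you never differentiate the iterates — only interval lengths enter — and the relation $\mu(I_N\times J_0)=\mu(I_0\times J_N)$ is a clean measure-theoretic statement. What it costs is the extra layer of fundamental domains and the mean value argument for the densities. One substantive remark: the step you flag as "the hard part" (controlling the discrepancy between $f$ and the time-one map of a vector field) is actually a non-issue here, because the example is rigged so that $\Phi(x)=e^{x^{-2}}$ and $\Psi(y)=e^{y^{-4}}$ are \emph{exact} Fatou coordinates conjugating the relevant germ to $t\mapsto t+1$; the iterates are given in closed form, $f^n(x)=(\mathrm{Log}(n+e^{x^{-2}}))^{-1/2}$ and $f^{-n}(y)=-(\mathrm{Log}(n+e^{y^{-4}}))^{-1/4}$ (the paper writes these out), so $|I_N|$ and $|J_N|$ are elementary to estimate with no $(1+o(1))$ bookkeeping. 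Your observation that matching exponents on the two sides would destroy the obstruction is also consistent with the paper's closing remark that the key is having the same behaviour on each side of the fixed point.
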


\begin{proof} Start by considering sequences $x_n \in \intof{0}{1}$ and $y_n\in \intfo{-1}{0}$ such that $x_n\to x\ne 0$ and $v_n=f^n(y_n)\to v\in \intfo{-1}{0}$ (this implies that $f^n(x_n)\to 0$ and $y_n\to 0$).\\
\indent If $f$ preserves a volume form $\omega$ on $\mathcal C$, then we find: \begin{equation} (f^n)'(x_n)(f^n)'(y_n)=\frac{\omega(x_n,y_n)}{\omega(f^n(x_n),f^n(y_n))}\to \frac{\omega(x,0)}{\omega(0,v)}\in \intoo{0}{+\infty}\tag{$*$} \label{preserves}  \end{equation}
\indent By rewriting $(f^n)'(y_n)=1/(f^{-n})'(v_n)$, we see that  computing the product $(f^n)'(x_n)(f^n)'(y_n)$ only uses  $f$ on $\intff{-1}{1}$.\\
\indent For $x\in \intof{0}{1}$, we find $f^n(x)=(\textrm{Log}(n+e^{\,x^{-2}}))^{-\frac{1}{2}}$ for all $n>0$, which gives: $$(f^n)'(x)=\frac{1}{x^3}\frac{1}{1+ne^{-x^{-2}}}(\textrm{Log}(n+e^{\,x^{-2}}))^{-\frac{3}{2}}$$ Similarly, for $y\in \intfo{-1}{0}$, we find $f^{-n}(y)=- (\textrm{Log}(n+e^{\,y^{-4}}))^{-\frac{1}{4}}$ and $$(f^{-n})'(y)=\frac{-1}{y^5}\frac{1}{1+ne^{-y^{-4}}}(\textrm{Log}(n+e^{\,y^{-4}}))^{-\frac{5}{4}}$$
This shows that: $$(f^n)'(x_n)(f^n)'(y_n)=\frac{(f^n)'(x_n)}{(f^{-n})'(v_n)}\sim \frac{-v^5}{x^3}e^{\,x^{-2}-v^{-4}} (\textrm{Log}(n))^{-\frac{1}{4}} \to 0$$ This is in contradiction with \eqref{preserves}.
\end{proof}

We will not try to give a necessary and sufficient condition for a diffeomorphism with one fixed point to preserve a volume form on $\mathcal C$. Note that the example in  Proposition \ref{does_not_preserve} is $C^{\infty}$-tangent to the identity at its fixed point. The same calculations could  give a smooth preserved volume form for a  diffeomorphism that is not infinitely tangent to the identity, as well as for some examples that are infinitely tangent to the identity. It seems that the key for preserving a volume form on $\mathcal C$ is having the same behavior on each side of the fixed point. 

\subsection{The hyperbolic case} \label{subsec:hyperbolic}

In the hyperbolic case (i.e. a diffeomorphism with two fixed points), we can start by seeing that all north/south diffeomorphisms cannot preserve a smooth volume.

\begin{lemme} \label{necessary} Let $f\in \Diff(\Ss^1)$ have exactly two fixed points $N$ and $S$. If $f$ is volume preserving, then $f'(N)\ne 1$ and $f'(N)f'(S)=1$. \end{lemme}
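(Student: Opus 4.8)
The plan is to recognize the volume-form invariance as a genuine isometry condition and then apply the rigidity of isometries. On the connected manifold $\mathcal C$, define the symmetric $2$-tensor
\[ g = \omega(x,y)\,dx\,dy. \]
Since $\omega$ is a nonvanishing (say positive) volume form, $g$ is nondegenerate of signature $(1,1)$, i.e. a Lorentzian metric --- the same one already noted after Proposition \ref{elliptic}. The defining relation $\omega(f(x),f(y))f'(x)f'(y)=\omega(x,y)$ is precisely the statement that the diagonal map $F=(f,f):(x,y)\mapsto(f(x),f(y))$ satisfies $F^\star g = g$; that is, $F$ is an isometry of $(\mathcal C,g)$.

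For the identity $f'(N)f'(S)=1$, I would note that $(N,S)\in\mathcal C$ because $N\ne S$, and that it is fixed by $F$. Letting $x\to N$ and $y\to S$ in the invariance relation and using continuity of $\omega$ and of $f'$ gives $\omega(N,S)f'(N)f'(S)=\omega(N,S)$, and since $\omega(N,S)\ne 0$ we obtain $f'(N)f'(S)=1$. Equivalently, the differential $dF_{(N,S)}=\diag(f'(N),f'(S))$ must be a linear isometry of $g_{(N,S)}=\omega(N,S)\,dx\,dy$, which forces the same relation.

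For $f'(N)\ne 1$, I would argue by contradiction. If $f'(N)=1$, the previous step gives $f'(S)=1$, hence $dF_{(N,S)}=\diag(1,1)=\mathrm{Id}$. Now $F$ is an isometry of the connected pseudo-Riemannian manifold $(\mathcal C,g)$ fixing the point $(N,S)$ with identity differential there. By the classical rigidity of isometries --- an isometry preserves the Levi--Civita connection, hence is affine, and an affine transformation of a connected manifold is determined by its $1$-jet at a single point --- this forces $F=\mathrm{Id}$ on all of $\mathcal C$, so $f=\mathrm{Id}$, contradicting that $f$ has exactly two fixed points. Therefore $f'(N)\ne 1$.

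The two points requiring care are, first, that $\mathcal C$ is connected (the complement of the diagonal in the torus is a cylinder), so that the rigidity statement propagates the identity from $(N,S)$ to all of $\mathcal C$; and second, that the "$1$-jet determines the isometry" theorem is applied in the smooth Lorentzian setting, which is legitimate since it rests only on the Levi--Civita connection and its geodesics and requires neither analyticity nor definiteness (here I assume, as in the context of Proposition \ref{hyperbolic}, that $\omega$ is smooth; if $\omega$ is merely continuous one replaces this step by a direct limiting computation of $(f^n)'(x_n)(f^n)'(y_n)$ in the spirit of Proposition \ref{does_not_preserve}). The main conceptual step, and the only genuine obstacle, is recognizing that volume-form invariance is exactly a Lorentzian isometry condition; once this is in place, both conclusions follow from standard pseudo-Riemannian rigidity.
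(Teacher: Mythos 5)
Your proof of the first assertion is the paper's: evaluate the invariance relation at the fixed point $(N,S)\in\mathcal C$. For the second assertion you and the paper both run a ``the $1$-jet of $f$ at the fixed point determines $f$'' argument, but by genuinely different means. You invoke the classical rigidity of isometries of a connected pseudo-Riemannian manifold (an isometry fixing a point with identity differential is the identity), which is correct --- $\mathcal C$ is indeed a connected annulus, and the statement needs no completeness --- but it requires the metric $g=\omega\,dx\,dy$ to carry a Levi--Civita connection and exponential maps, i.e.\ $\omega$ at least $C^{1,1}$. The paper instead works on a single isotropic leaf: it solves $x'(t)=1/\omega(x(t),S)$, $x(0)=N$ (equivalently, takes the primitive $t=\int\omega(s,S)\,ds$ as a new coordinate on $\Ss^1\setminus\{S\}$), notes that existence and uniqueness hold for a scalar autonomous equation with continuous positive right-hand side, and checks that in this coordinate the conjugate $\alpha=x^{-1}\circ f\circ x$ satisfies $\alpha'\equiv 1/f'(S)=1$, hence $\alpha=\mathrm{Id}$; an open--closed argument on the connected set $\Ss^1\setminus\{S\}$ then gives $f=\mathrm{Id}$. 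This buys the result for merely continuous $\omega$.

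That regularity point is the one genuine gap in your proposal. The lemma is stated for ``volume preserving'' $f$, its proof in the paper explicitly takes $\omega$ continuous, and it is used in Proposition \ref{hyperbolic} for $C^k$ volume forms with $k\ge 0$, so the $C^0$ case is actually needed. Your fallback for that case --- a limiting computation of $(f^n)'(x_n)(f^n)'(y_n)$ ``in the spirit of Proposition \ref{does_not_preserve}'' --- is not carried out and is not obviously workable as stated: that proposition exploits the explicit form of one particular parabolic map, whereas here you would need an argument valid for an arbitrary $f$ with $f'(N)=f'(S)=1$ preserving an arbitrary continuous $\omega$. The correct elementary substitute is exactly the paper's one-dimensional integration argument, which you can view as the restriction of your isometry-rigidity idea to the leaf $(\Ss^1\setminus\{S\})\times\{S\}$, where only a primitive of $\omega(\cdot,S)$ --- and no connection --- is required. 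With that replacement (or with the standing hypothesis that $\omega$ is $C^2$), your argument is complete.
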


\begin{proof}Let $\omega$ be a continuous volume form on $\mathcal C$ preserved by $f$.  The identity $\omega(f(x),f(y))f'(x)f'(y)=\omega(x,y)$ considered  at the point $(N,S)\in \mathcal C$ shows that $f'(N)f'(S)=1$. Assume that  $f'(N)=1$ (hence $f'(S)=1$).\\
\indent Let $x(t)$ be a maximal solution of the Cauchy problem:
\begin{equation*}
\left\{
\begin{array}{cc}
  x'(t)  = & \frac{1}{\omega(x(t),S)}   \\
 x(0) = & N   \\
\end{array}
\right.
\end{equation*}
 \indent Not only does $x$ exist (Cauchy-Peano Theorem), but it is also unique (so are solutions to all equations $y'=F(y)$ in $\R$ where $F>0$).  Since $x'>0$, it is a diffeomorphism from an open interval $I\subset \R$ onto its image $J\subset \Ss^1\setminus \{S\}$. Let $\alpha = x^{-1} \circ f \circ x$. A simple calculation shows that $\alpha'(t)=1$ for all $t\in I$. Since $\alpha(0)=0$, we see that $\alpha =Id$ and $f(x)=x$ for all $x\in I$. Therefore the set of points $x\in \Ss^1\setminus \{S\}$ such that $f(x)=x$ and $f'(x)=1$ is open. It is also closed, and $\Ss^1\setminus \{S\}$ is connected, so $f=Id$.
\end{proof}

This property is  satisfied by a hyperbolic element of $\PSL(2,\R)$ (the derivatives at the fixed points are the squares of the eigenvalues of the matrix), and therefore by any diffeomorphism that is differentially conjugate to a hyperbolic element of $\PSL(2,\R)$, but there are examples of diffeomorphisms satisfying this property that have no differential conjugate in $\PSL(2,\R)$.\\
\indent Indeed, start with $\gamma\in \PSL(2,\R)$ a hyperbolic element. Let $N$ and $S$ be its fixed points. Let $\p\in\Homeo(\Ss^1)$ be such that: \begin{itemize} \item $\p$ fixes $N$ and $S$ \item $\p$ is a diffeomorphism on $\Ss^1\setminus\{S\}$ \item $\p$ is the identity in a neighborhood of $N$ \item $\p$ commutes with $\gamma$ in a neighborhood of $S$ \end{itemize}

Set $f=\p^{-1}\gamma\p\in \Diff(\Ss^1)$. If $f$ were differentially conjugate to an element of $\PSL(2,\R)$, then this element could be chosen to be $\gamma$. If $h^{-1}fh=\gamma$, then $\p\circ h$ is a diffeomorphism of $\Ss^1\setminus \{S\}$ that commutes with $\gamma$. This implies that  there is some $t\in \R$ such that $\p\circ h=\gamma_t$ on $\Ss^1\setminus \{S\}$ where $\gamma_s$ is the one parameter subgroup of $\PSL(2,\R)$ generated by $\gamma$. Indeed, in projective charts, we can see $\p\circ h$ as a diffeomorphism that commutes with a non trivial homothety $x\mapsto \lambda x$. The derivative is a continuous function on $\R$ invariant under $x\mapsto \lambda x$, hence constant, and $\p \circ h$ fixes $0$, hence is equal to some $x\mapsto \mu x$ in projective charts.   \\
\indent By continuity, the equality $\p\circ h=\gamma_t$ holds on all $\Ss^1$, and $\p$ is differentiable. Hence, if we choose $\p$ non differentiable, then $f$ is not differentially conjugate to an element of $\PSL(2,\R)$.\\
\indent The obstruction for a diffeomorphism with two fixed points to be differentially conjugate to an element of $\PSL(2,\R)$ is encoded in an element of $\Diff(\Ss^1)/\PSL(2,\R)$ called the Mather invariant (see \cite{Y95} for more details).\\
\indent Knowing this, the following result shows that preserving a volume form on $\mathcal C$ is not enough in order to be differentially conjugate to a homography.\\

\noindent\textbf{Proposition \ref{hyperbolic}.} \emph{Let $f\in\Diff^{k+1}(\Ss^1)$ ($k\ge 0$) have exactly two fixed points $N$ and $S$. It preserves a $C^k$ volume form on $\mathcal C$ if and only if $f'(N)f'(S)=1$ and $f'(N)\ne 1$.}
\begin{proof} Let $\lambda=f'(N)$ and let  $h_N: \Ss^1\setminus\{ S\} \to \R$ and $h_S: \Ss^1\setminus\{ N\} \to \R$ be the linearizations of $f$ at $N$ and $S$ (i.e. $h_N\circ f\circ h_N^{-1}(x)=\lambda x$ and $h_S\circ f\circ h_S^{-1}(x)=\lambda^{-1}x$). Let $U_1$ (resp. $U_2$) be a neighborhood of $(N,S)$ (resp. $(S,N)$) in $\mathcal C$ delimited by graphs of maps that commute with $f$ (hence invariant by $f$). The linearizations give us invariant volume forms (e.g.  $dx\wedge dy$ in coordinates) on $U_1$ and $U_2$. Since the action of $f$ on the complement of $U_1\cup U_2$ is proper (it is differentially conjugate to a  translation on the plane), we can find a smooth invariant volume form on $\mathcal C$ that coincides on $U_1$ and $U_2$ with the ones chosen above.
\end{proof}

\subsection{Analytic conjugacy}

In the fixed point free case, the conjugacy obtained is analytic when the diffeomorphism and the volume form are analytic. The previous construction in the hyperbolic case can never give a real analytic metric (given that the diffeomorphism is real analytic). In order to see this, we will introduce the Lorentz metric associated to a volume form on $\mathcal C$, which will give us  a notion of curvature. In the previous construction, the curvature is constant in a neighborhood of the axes, therefore any analytic prolongation to the whole cylinder would have constant curvature and the isometry group (that contains the diffeomorphism $f$) would be analytically Fuchsian.\\
\indent We can associate to the volume form $\omega(x,y)dx\wedge dy$ on $\mathcal C$ the Lorentz metric $g=\omega(x,y)dxdy$. If $\omega$ is $C^k$ with $k\ge 2$, then it defines the curvature as a real valued function $K$ on $\mathcal C$ that is $C^{k-2}$ (it is analytic when $\omega$ is analytic). The isometries of $g$ are the diagonal actions of circle diffeomorphisms that preserve $\omega$. In \cite{lorentz_surfaces}, we adopt the Lorentzian point of view and give a generalization of Theorem \ref{topologic} to a wider category of Lorentz surfaces. \\ 
\indent Lorentzian metrics, as well as Riemannian metrics, are examples of rigid geometric structures. We will use the fact that for an analytic rigid geometric structure, local vector fields generating isometries can be extended.\\

\noindent \textbf{Theorem \ref{analytic_hyperbolic}.} \emph{Let $f$ be an analytic diffeomorphism of $\Ss^1$ with exactly two fixed points. If it preserves an analytic volume form on $\mathcal C$, then it is analytically conjugate to an element of $\PSL(2,\R)$.}

\begin{proof} Let $\omega$ be an analytic volume form preserved by $f$. By Lemma \ref{necessary}, if $N$ and $S$ are the fixed points of $f$, then $\lambda=f'(N)\ne 1$ and $f'(S)=\lambda^{-1}$. By considering the linearizations of $f$ around its fixed points, we see that the diagonal action of $f$ is analytically conjugate in a neighborhood of $(N,S)$ to the map $(x,y)\mapsto (\lambda x,\lambda^{-1}y)$ in a neighborhood of $(0,0)$. Since it preserves the volume form $dx\wedge dy$ in those coordinates, we can write $\omega = e^{\sigma} dx\wedge dy$ in coordinates where $\sigma$ is an analytic function that satisfies $\sigma(\lambda x, \lambda^{-1}y)=\sigma(x,y)$. By writing $\sigma$ in its power series around $(0,0)$ and considering the invariance equation, we see that all the terms in $x^ny^p$ with $n\ne p$ must have zero as their  coefficient, therefore we can write $\sigma = f(xy)$ where $f$ is an analytic function, and the form $\omega$ is preserved (around the fixed point $(N,S)$) by the one parameter group associated to $f$.\\
\indent We will now apply the main result of \cite{Amores}: a local Killing field (i.e. a vector field that generates a flow of isometries) on a simply connected real analytic Lorentz manifold admits a unique extension to the whole manifold (the paper treats the more general case of finite type $G$-structures, which includes Lorentz metrics).\\
\indent In order to apply this result, consider a map from $\intff{N}{S}$ to  $\intff{S}{N}$ that commutes with the (topological) one parameter group associated to $f$, and let $U$ be the complement of the graph of this map. It is simply connected open set of $\mathcal C$ that is invariant under  the one parameter group associated to $f$ and that contains $(N,S)$ and $(S,N)$.  There is a vector field $\mathfrak X$ on $U$ that preserves $\omega$ and such that the time one map is $(f,f)$. Since the vector field $\mathfrak X$ has the form $\mathfrak X(x,y)=(\mathfrak x(x), \mathfrak x(y))$ where $\mathfrak x$ is defined on all $\Ss^1$, it is complete, and the map $f$ is the time $1$ of the flow of the analytic vector field $\mathfrak x$, hence $f$ is analytically conjugate to an element of $\PSL(2,\R)$ (the Mather invariant of the time one map of a flow is trivial, see \cite{Y95}).

\end{proof}

However, there are non Fuchsian examples in the parabolic case. Indeed, for $n\in \N$  odd and greater than $1$, consider the examples $f(x)=x(1+x^n)^{-1/n}$ discussed in the differentiable case. It is analytic on $\R\proj^1=\R\cup\{\infty\}$ (because $\frac{1}{f}$ is analytic in a neighbourhood of $-1$). It preserves the volume form $ \vert x^n-y^n\vert^{-1-1/n}dx \wedge dy$ which extends analytically to $\Ss^1\times \Ss^1\setminus \Delta$.\\
\indent The example of a parabolic diffeomorphism that does not preserve a volume form given in Proposition \ref{does_not_preserve} is not analytic. We suspect that in the parabolic case, all analytic diffeomorphisms preserve an analytic volume form on $\mathcal C$.

\section{Tools for the non elementary case} \label{sec:tools}

\subsection{The limit set}

Given a group $\Gamma \subset \Homeo(\Ss^1)$, then exactly one of the following conditions is satisfied (see \cite{Gh01} for a proof and more detail):
\begin{enumerate} \item $\Gamma$ has a finite orbit \item All orbits of $\Gamma$ are dense \item There is a compact $\Gamma$-invariant subset $K\subset \Ss^1$ which is infinite and different from $\Ss^1$, such that the orbits of points of $K$ are dense in $K$. \end{enumerate}

\indent In the third case, the set $K$ is unique, and it is homeomorphic to a Cantor set. It is called an \textbf{exceptional minimal set}. We can call a group $\Gamma\subset \Homeo(\Ss^1)$ \textbf{non elementary} if it does not have any finite orbit (this definition is not standard since we usually want to call the group generated by an irrational rotation elementary), and use $L_{\Gamma}$ to denote $\Ss^1$ in the second case and  the $\Gamma$-invariant compact set $K$ in the third case (we call $L_\Gamma$ the \textbf{limit set} of $\Gamma$).\\
\indent If $\Gamma \subset \PSL(2,\R)$ is non elementary and possesses hyperbolic elements (to avoid the case mentioned above), then $L_\Gamma$ is the intersection of the circle at infinity $\partial_\infty \h^2$ with the closure of the orbit $\overline{\Gamma.x}$ in $\overline{\h^2}$, independently of the point $x\in \h^2$.


\subsection{Projective structures and curvature}

One of the advantages of considering the Lorentz metric associated to a volume form on $\mathcal C$ is that it gives us a notion of curvature. In the two dimensional case, it is a function $K:\mathcal C\to \R$ that is $C^{k-2}$ when the volume form is $C^k$. In our setting, it has a simple expression:
$$K=\frac{2}{\omega}\frac{\partial^2 \Log\omega}{\partial x \partial y}$$
\indent  It is invariant under the diagonal actions of circle diffeomorphisms that preserve the volume form (because they are isometries). This will give an important subset of $\mathcal C$ on which the curvature is constant.

\begin{lemme} \label{curvature_limit_set} Let $\rho:\Gamma \to  \Diff(\Ss^1)$ be a   representation that preserves a $C^2$ volume form on $\mathcal C$. Assume that there is a least one hyperbolic element. Then the curvature $K$ is constant on $(L_{\rho(\Gamma)}\times \Ss^1\cup \Ss^1\times L_{\rho(\Gamma)})\setminus \Delta$. \end{lemme}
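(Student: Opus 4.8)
The plan is to combine two ingredients: the curvature $K$ is continuous (since $\omega$ is $C^2$) and, as noted just above, invariant under the diagonal action of every element of $\rho(\Gamma)$, because these act as isometries of the Lorentz metric $\omega\,dx\,dy$; and the diagonal action of a hyperbolic element has a saddle at its fixed-point pair. Fix a hyperbolic $\gamma_0\in\rho(\Gamma)$ with its two fixed points $N,S$. Applying Lemma \ref{necessary} to $\gamma_0$ (which preserves $\omega$) gives $\gamma_0'(N)\gamma_0'(S)=1$ and $\gamma_0'(N)\ne 1$, so after relabelling $N$ is attracting and $S$ repelling, with genuine north–south dynamics: $\gamma_0^n(x)\to N$ for $x\ne S$ as $n\to+\infty$, and $\gamma_0^n(x)\to S$ for $x\ne N$ as $n\to-\infty$. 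Since we are in the non-elementary setting, $L=L_{\rho(\Gamma)}$ is infinite, closed and $\gamma_0$-invariant; taking any $x\in L\setminus\{S\}$ and letting $n\to+\infty$ shows $N\in L$, and symmetrically $S\in L$.

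The first key step is a collapse onto $(N,S)$. For fixed $y\ne N$, invariance under $\gamma_0$ gives $K(N,y)=K(\gamma_0^n(N),\gamma_0^n(y))=K(N,\gamma_0^n(y))$; letting $n\to-\infty$ the point $(N,\gamma_0^n(y))$ stays off $\Delta$ and converges to $(N,S)\in\mathcal C$, so continuity of $K$ yields $K(N,y)=K(N,S)=:c_0$. This is the delicate point, and the one I expect to require the most care: one must iterate \emph{backward}, not forward, since forward iteration drives both coordinates to $N$ and lands on the diagonal where $K$ is undefined, whereas the saddle structure keeps the first coordinate at $N$ while sending the second to $S$, so the limit remains in $\mathcal C$. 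Thus $K(N,\cdot)\equiv c_0$ on $\Ss^1\setminus\{N\}$.

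Next I propagate this from $N$ to all of $L$ by minimality. As $L$ is minimal and $N\in L$, the orbit $\rho(\Gamma)\cdot N$ is dense in $L$, so given $p\in L$ pick $g_k\in\rho(\Gamma)$ with $g_k(N)\to p$. For any fixed $y\ne p$ and $k$ large enough that $g_k(N)\ne y$, invariance under $g_k^{-1}$ gives $K(g_k(N),y)=K(N,g_k^{-1}(y))=c_0$ by the previous step; passing to the limit and using continuity gives $K(p,y)=c_0$, i.e. $K\equiv c_0$ on $(L\times\Ss^1)\setminus\Delta$. Running the same argument in the second variable (collapsing $(x,N)$ onto $(S,N)$ as $n\to-\infty$, then spreading over $L$ via density) gives $K\equiv K(S,N)$ on $(\Ss^1\times L)\setminus\Delta$. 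The two constants must coincide, since $(N,S)$ lies in both strips (as $N,S\in L$): evaluating the second identity there forces $K(S,N)=K(N,S)=c_0$. Hence $K$ is constant, equal to $c_0$, on all of $(L_{\rho(\Gamma)}\times\Ss^1\cup\Ss^1\times L_{\rho(\Gamma)})\setminus\Delta$, as claimed.
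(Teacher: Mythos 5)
Your proof is correct and follows essentially the same strategy as the paper's: collapse onto the saddle fixed point $(N,S)$ of a hyperbolic element (iterating in the direction that keeps the pair off the diagonal) to see that $K$ is constant on the axes $\{N\}\times(\Ss^1\setminus\{N\})$ and $(\Ss^1\setminus\{S\})\times\{S\}$, then spread this value over the limit set using density of the orbit of a fixed point together with equivariance and continuity of $K$. The only cosmetic difference is that the paper links the axes of distinct hyperbolic elements by observing that they intersect, whereas you propagate directly from a single axis via the group action; both are valid.
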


\begin{proof} Let $\omega$ be such a volume form.  If $\gamma \in \Gamma$ and $\rho(\gamma)$ has two fixed points $N,S$ in $\Ss^1$, then we can consider the fixed point $p=(N,S)\in \mathcal C$. The orbits of points of the axes $\{N\}\times \Ss^1\setminus \{N\}$ and $\Ss^1\setminus\{S\} \times \{S\}$ accumulate on $p$, therefore the curvature at these points have the same value $K( p)$. Given two hyperbolic elements of $\Gamma$, the axes meet, therefore the curvature has the same value on the axes of all hyperbolic elements of $\Gamma$. Since a fixed point of a hyperbolic element has a dense orbit in $L_{\rho(\Gamma)}$, we find that $K$ is constant on  $(L_{\rho(\Gamma)}\times \Ss^1\cup \Ss^1\times L_{\rho(\Gamma)})\setminus \Delta$ \end{proof}

Note that the exact same proof works for any continuous function on $\mathcal C$ invariant under the action of $\Gamma$. The specificity of the curvature is that when it is constant, the metric is locally isometric to a model space. We will now see how this can give a global conjugacy for the isometry group. It is in general more difficult to have global results on constant curvature Lorentz manifolds than on Riemannian manifolds, because the associated $(G,X)$-structure  is not always complete (the developing map may not be a covering map, whereas it is always the case for Riemannian isometries).\\

\indent Another tool that we get with a Lorentz metric is geodesics. Horizontal and vertical lines in $\mathcal C=\Ss^1\times \Ss^1\setminus \Delta$ are geodesics (because they are the only isotropic curves), which gives us some specific parametrisations. We will translate them in terms  of projective structures on one dimensional manifolds.\\
\indent A \textbf{projective structure} on a one-dimensional manifold $I$ is an atlas $(U_i,f_i)$  with $f_i:U_i\to \R\proj^1$ such that the transition maps $f_i\circ f_j^{-1}$ are projective diffeomorphisms (i.e. restrictions of elements of $\PSL(2,\R)$).  If $f$ is a diffeomorphism between two projective one-dimensional manifolds $I$ and $J$, then one can  define a quadratic differential $s(f)$ on $I$, called the \textbf{Schwarzian derivative} of $f$,  by $s(f)=\left(\frac{f'''}{f'}-\frac{3}{2}(\frac{f''}{f'})^2\right)dx^2$ in projective charts. Then $f$ is a projective diffeomorphism (i.e. $f$ has the form $x\mapsto \frac{ax+b}{cx+d}$ in projective charts) if and only if $s(f)=0$ (see \cite{Gh93} for more details).\\
\indent Note that some links between the Schwarzian derivative and Lorentzian geometry have been studied, mostly concerning the geodesic curvature (see \cite{DO}).\\
\indent Geodesics inherit a projective structure, the charts being given by the different parametrisations of the geodesic (the coordinate changes are affine, therefore projective). Recall that the geodesic equations are the following: \begin{eqnarray*}  x''+\frac{1}{\omega}\frac{\partial \omega}{\partial x}x'^2 &=& 0\\ y''+\frac{1}{\omega}\frac{\partial \omega}{\partial y}y'^2 &=&0\end{eqnarray*}
\indent A representation $\rho: \Gamma \to \Diff(\Ss^1)$ is differentially Fuchsian if and only if it preserves a projective structure on $\Ss^1$ that is equivalent to the standard structure on $\R\proj^1$ (because a conjugacy between $\rho$ and a Fuchsian representation is the same as a projective diffeomorphism with $\R\proj^1$).  Therefore in order to show that a representation is differentially Fuchsian, we can proceed in two steps: first we find an invariant projective structure, then we show that it is equivalent to the standard projective structure on $\R\proj^1$. This is what we will use in the proof of the following result.

\begin{lemme} \label{constant_curvature} Let $\rho: \Gamma \to \Diff(\Ss^1)$ be a representation that preserves a $C^2$ volume form on $\mathcal C$. Assume that its curvature is constant. Then $\rho$ is differentially Fuchsian. \end{lemme}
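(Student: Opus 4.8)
The plan is to follow the two–step program stated just before the lemma: first produce a $\Gamma$-invariant projective structure on $\Ss^1$, then show it is equivalent to the standard structure on $\R\proj^1$. The invariant structure comes from the null geodesics of $g=\omega\,dx\,dy$. Fixing $a\in\Ss^1$, the horizontal line $(\Ss^1\setminus\{a\})\times\{a\}$ is a null geodesic, and integrating the geodesic equation $x''+\frac1\omega\frac{\partial\omega}{\partial x}(x')^2=0$ shows that its affine parameter is, up to $t\mapsto\alpha t+\beta$, the function
\[ \phi_a(x)=\int^x \omega(s,a)\,ds. \]
Thus for each $a$ we get a chart $\phi_a:\Ss^1\setminus\{a\}\to\R$ defined up to affine maps. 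Since every $f\in\Gamma_\omega$ is an isometry of $g$, it sends the line at height $a$ to the line at height $f(a)$ preserving affine parameters; indeed the preservation relation gives $\phi_{f(a)}\circ f=\tfrac1{f'(a)}\phi_a+\mathrm{const}$, so each $f$ is affine, hence projective, in these charts.

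It remains to check that $\{\phi_a\}$ is a \emph{projective} atlas, and this is exactly where constant curvature enters. Writing $L=\log\omega$, the Schwarzian of $\phi_a$ in the coordinate $x$ is $s(\phi_a)=\partial_x^2 L-\tfrac12(\partial_x L)^2$. By the cocycle rule for the Schwarzian, the transition $\phi_b\circ\phi_a^{-1}$ is projective if and only if $s(\phi_a)=s(\phi_b)$, i.e. if and only if $\partial_y\,s(\phi_a)\equiv0$. Now
\[ \partial_y\big(\partial_x^2 L-\tfrac12(\partial_x L)^2\big)=\partial_x^2\partial_y L-\partial_x L\,\partial_x\partial_y L, \]
and the constant curvature hypothesis is precisely the Liouville equation $\partial_x\partial_y L=\tfrac K2\,e^{L}$, whence $\partial_x^2\partial_y L=\tfrac K2 e^L\partial_x L=\partial_x L\,\partial_x\partial_y L$ and the expression vanishes. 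So $\{\phi_a\}$ defines a $\Gamma$-invariant projective structure $P$ on $\Ss^1$ on which $\Gamma$ acts by projective transformations.

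To identify $P$ with the standard structure I would split according to the behaviour of $\omega$ near the removed diagonal. As $\omega>0$, each $\phi_a$ is increasing, hence a diffeomorphism of $\Ss^1\setminus\{a\}$ onto an interval, and it is onto all of $\R$ exactly when $\omega(a,\cdot)$ blows up non-integrably on both sides of $y=a$. If instead $M(a):=\int_{\Ss^1}\omega(a,y)\,dy<\infty$, then integrating the preservation relation in $y$ gives $f'(x)M(f(x))=M(x)$, so $M(x)\,dx$ is a $\rho$-invariant smooth measure; conjugating by $x\mapsto\int^x M$ simultaneously turns every $\rho(\gamma)$ into a rotation, and $\rho$ is differentially Fuchsian with image in the elliptic subgroup of $\PSL(2,\R)$. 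In the non-integrable case each $\phi_a$ is a diffeomorphism onto $\R=\R\proj^1\setminus\{\infty\}$; extending by $a\mapsto\infty$ and using that the transitions are projective gives a projective diffeomorphism $h:\Ss^1\to\R\proj^1$, which is the desired conjugacy with $h\rho(\Gamma)h^{-1}\subset\PSL(2,\R)$ (the finite regularity of $h$ can then be upgraded, since $\PSL(2,\R)$ acts analytically).

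The crux, and the step I expect to be the main obstacle, is precisely this identification with the standard structure: one must exclude a non-complete invariant projective structure, which is the completeness issue for the associated $(G,X)$-structure flagged above. The cleanest control comes from the Liouville normal form: after rescaling $\omega$ so that $K=-1$, constant curvature forces $\omega=\frac{4\,U'(x)V'(y)}{(U(x)-V(y))^2}$ locally (for $K=0$ one gets $\omega=p(x)q(y)$, which is integrable and returns rotations), so that $\phi_a$ is a Möbius function of $U$ and the developing map is essentially $U$. Because $\omega$ is finite on all of $\mathcal C$, the coincidence locus $\{U(x)=V(y)\}$ must lie in $\Delta$; this both forces $U=V$ and shows that the integrability of $\omega(a,\cdot)$ at $y=a$ is governed \emph{uniformly} by whether $U(a)=V(a)$, so the dichotomy above is genuinely exhaustive. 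Making this global analysis of $\omega$ near $\Delta$ rigorous — confirming the uniform two-sided behaviour and that the developing map globalizes to a diffeomorphism — is where the real work lies.
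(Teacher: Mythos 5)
Your first step --- the construction of the $\Gamma$-invariant projective structure --- is correct and is a legitimate computational variant of the paper's argument: where the paper invokes a local isometry with the constant-curvature model to see that the transitions $f_{y'}\circ f_y^{-1}$ between geodesic parametrizations are projective, you verify directly that the Liouville equation $\partial_x\partial_y\log\omega=\tfrac{K}{2}\omega$ kills $\partial_y\,s(\phi_a)$; and your identity $\phi_{f(a)}\circ f=\tfrac{1}{f'(a)}\phi_a+\mathrm{const}$ is exactly the paper's observation that $f_y\circ\rho(\gamma)$ is again the inverse of a geodesic parametrization. (One small caveat: $\partial_y\,s(\phi_a)$ involves third derivatives of a form that is only assumed $C^2$, so you should note that the Liouville equation itself supplies the regularity of $\partial_x\partial_y\log\omega$ needed to differentiate once more in $x$.)

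The genuine gap is in the second step, and you have located it but not closed it. You try to prove that the projective structure is \emph{intrinsically} equivalent to the standard one by analyzing $\omega$ near the diagonal (integrable versus non-integrable, global Liouville normal form $\omega=4U'V'/(U-V)^2$ with $U=V$). This cannot work as stated: for $\omega\equiv 1$ (curvature $0$) the null-geodesic charts give the \emph{translation} structure on $\Ss^1$, whose developing map is not surjective onto $\R\proj^1$; the structure is genuinely non-standard, even though the conclusion of the lemma still holds because the isometry group is then forced into the rotations. So standardness must be extracted from the group action, not from $\omega$ alone. Moreover your dichotomy is not shown to be exhaustive (the integrability of $\omega(a,\cdot)$ at $y=a$ could a priori depend on $a$ or on the side of the diagonal), and globalizing the normal form is precisely the completeness problem for the associated $(G,X)$-structure that you yourself flag as ``where the real work lies''. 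The paper closes this step by a different case split, on the dynamics rather than on $\omega$: if some $\rho(\gamma)$ has a fixed point, Lemma 5.1 of \cite{Gh93} shows that an invariant projective structure is standard; if all elements are elliptic, Theorem \ref{topologic} together with the classification of purely elliptic subgroups of $\PSL(2,\R)$ reduces to an abelian group, which is linearized by the invariant Riemannian metric $\omega(x,\rho(\gamma_0)x)\rho(\gamma_0)'(x)\,dx^2$ as in Proposition \ref{elliptic} --- close in spirit to your $M(x)\,dx$ argument, but without having to establish that $\int_{\Ss^1}\omega(a,y)\,dy$ is finite.
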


\begin{proof} Given $y\in \Ss^1$, we consider a diffeomorphism $f_y:  \Ss^1\setminus\{y\} \to \R$ given by a parametrization of the horizontal circle $\Ss^1\setminus\{y\}\times \{y\}$ as a geodesic for the Lorentz metric associated to $\omega$. This gives us an atlas of $\Ss^1$, and we will first show that it is a projective structure, i.e. that  the transition maps $f_{y'}\circ f_y^{-1}$ are projective. For any sequence $y_1,\dots,y_n$, we can decompose $f_{y'}\circ f_y^{-1}$: 
$$f_{y'}\circ f_y^{-1}=(f_{y'}\circ f_{y_n}^{-1}) \circ (f_{y_n}\circ f_{y_{n-1}}^{-1})\circ \cdots \circ (f_{y_1}\circ f_{y}^{-1})$$ \indent Since the composition of projective maps is projective, it is enough to show that $f_{y'}\circ f_y^{-1}$ is projective when $y$ and $y'$ are sufficiently close.\\
\indent Given $(x,y)\in \mathcal C$, we can find a local isometry with the model space of constant curvature, which can also be seen (locally) as a volume form on $\mathcal C$ ($dx\wedge dy$ for zero curvature, $\pm \frac{4dx\wedge dy}{(x-y)^2}$ for curvature $\pm 1$). An isometry sends parametrized geodesics onto parametrized geodesics, hence $f_{y'}\circ f_y^{-1}$ is equal to the analogue in the model space, and it is projective because it is the case in the model space.\\
\indent Given an element $\gamma \in \Gamma$, we know that $f_y\circ \rho(\gamma)$ is also the inverse of the parametrization of a geodesic, hence $f_{y'}\circ\rho(\gamma)\circ f_y^{-1}$ is projective, and the projective structure that we defined is preserved by $\rho$.\\
\indent To conclude, we separate two cases. If there is an element of $\Gamma$ with a fixed point in $\Ss^1$, then Lemma 5.1  of \cite{Gh93} concludes that the projective structure is equivalent to the standard structure on $\R\proj^1$, and $\rho$ is differentially Fuchsian.\\
\indent If all elements are elliptic, then applying Theorem \ref{topologic} shows that $\rho$ is topologically conjugate to a representation in $\PSL(2,\R)$ with only elliptic elements, and it is therefore conjugate to a subgroup of $\textrm{SO}(2,\R)$ (see \S 7.39 in \cite{beardon}). In particular, it is abelian, and the same argument as in Proposition \ref{elliptic} ($\rho$ preserves the Riemannian metric $\omega(x,\rho(\gamma_0) x)\rho(\gamma_0)'(x)dx^2$ on $\Ss^1$ where $\gamma_0$ is any element in $\Gamma\setminus \{e\}$) shows that $\rho$ is differentially conjugate to a representation in $\textrm{SO}(2,\R)\subset \PSL(2,\R)$.

\end{proof}

One could ask if, more generally,  the projective structures given by the parametrisations of isotropic geodesics are invariant under the isometry group (which is the same as asking for the maps $f_{y'}\circ f_y^{-1}$ to be projective). In \cite{these} (Chapter 4, section 3), we compute the Schwarzian derivative of the maps $f_{y'}\circ f_y^{-1}$  and show that it  only vanishes when the curvature is constant. We will see in Proposition \ref{invariant_function} that it is not always the case.

\section{Rigidity results for non elementary groups} \label{sec:rigidity}

\subsection{Topologically transitive actions}
In the topologically transitive case, i.e. when the limit set is the whole circle, the situation is rigid (provided sufficient regularity). We will use the results stated above to show that the curvature is constant.\\

\noindent \textbf{Theorem \ref{transitive}.} \emph{Let $\rho:\Gamma \to  \Diff(\Ss^1)$ be a  topologically transitive representation that preserves a $C^2$ volume form on $\mathcal C$. Then $\rho$ is differentially Fuchsian.}

\begin{proof} If there is a hyperbolic element, then Lemma \ref{curvature_limit_set}  states that the curvature is constant on $\mathcal C$ and Lemma \ref{constant_curvature} allows us to conclude.\\
\indent We now treat the case where there are no hyperbolic elements, i.e. all elements are elliptic or parabolic. First assume that there is a parabolic element $\gamma$. Let $x_0\in \Ss^1$ be its fixed point.  If there is another parabolic element $\delta$ with a different fixed point, then either $\gamma \delta$ or $\gamma^{-1}\delta$ is hyperbolic, hence we can assume that all parabolic elements fix $x_0$. Since the group is not elementary, there is a non trivial elliptic element $\alpha$. The conjugate $\alpha \gamma \alpha^{-1}$ is a parabolic element whose fixed point is $\rho(\alpha)(x_0)\ne x_0$, and as we just showed this implies the existence of a hyperbolic element in $\Gamma$. We have shown that the existence of a parabolic element in a non elementary group preserving a volume form on $\mathcal C$ implies the existence of a hyperbolic element.\\
\indent We are left with the case where all elements are elliptic, where we simply notice that we did not use the fact that the curvature is constant in this case in the proof of Lemma \ref{constant_curvature}. \end{proof}

The regularity of the preserved volume form is  essential in this result. If $(S,h)$ is a smooth compact Riemannian surface of negative curvature, then the fundamental group $\pi_1(S)$ acts isometrically on the universal cover $\tilde S$, hence it acts on its boundary at infinity $\partial_{\infty}\tilde S \approx \Ss^1$. 
To find an invariant volume form, consider the space of oriented geodesics of $\tilde S$. It can be seen as $\rm{T}^1\tilde S/\R$ where the action of $\R$ is the geodesic flow, and $\pi_1(S)$ preserves the form $\omega=d\lambda$ where $\lambda$ is the projection of the Liouville $1$-form on $\rm{T}^1\tilde S$. An oriented geodesic is given by a starting point and an end point on $\partial_{\infty}\tilde S$, which gives an identification between $\rm{T}^1\tilde S/\R$ and $\mathcal C=\partial_{\infty}\tilde S\times \partial_{\infty}\tilde S \setminus \Delta$. This identification is only a $C^1$-diffeomorphism (its regularity is exactly the regularity of the weak stable and weak unstable foliations of the geodesic flow), so the volume form obtained on $\Ss^1\times \Ss^1\setminus \Delta$ is only continuous. A result of Ghys in \cite{Gh87} states that if the identification $\rm{T}^1\tilde S/\R\approx \mathcal C$ is $C^2$, then $(S,h)$ has constant curvature. \\
\indent It is not even clear whether the  regularity required in Theorem \ref{transitive} can be lowered to $C^{1,1}$ (i.e. $C^1$ with a Lipschitz derivative). In this case, the curvature is defined almost everywhere, and is locally $L^\infty$. To ensure that such a function is constant almost everywhere, the right notion is no longer topological transitivity but ergodicity. A group action by diffeomorphisms on a manifold is  \textbf{ergodic} if all invariant measurable sets are either negligible  or of full measure (for the class of the Lebesgue measure). If an action on the circle $\rho:\Gamma \to \Diff(\Ss^1)$ is ergodic, then the diagonal action on $\mathcal C$ also is. The question of knowing whether topologically transitive actions on the circle are ergodic is very important in the theory of circle diffeomorphisms. It has been proven to be true for analytic actions of finitely generated free groups (in \cite{Herman} for $\Z$, and  \cite{DKN09},\cite{DKN13} for $\mathbb F_n$, $n\ge 2$), and it is expected to be true for $C^2$ actions of finitely presented groups. This could be applied in our situation (if the metric is $C^{1,1}$, then isometries are $C^{2,1}$), but we would still have to prove that if the curvature is constant almost everywhere, then we have an isometry with the model space.

\subsection{Analytic rigidity}

As in the elementary case, analyticity also provides more rigidity in the non elementary case.\\

\noindent \textbf{Theorem \ref{analytic_non_elementary}.} \emph{Let $\rho: \Gamma \to \emph{\Diff}^{\omega}(\Ss^1)$ be a non elementary  representation that preserves an analytic volume form on $\mathcal C$. Then $\rho$ is analytically Fuchsian.}

\begin{proof}

 Applying Lemma \ref{curvature_limit_set} we see that the curvature is constant on the set $(L_{\rho(\Gamma)}\times \Ss^1\cup \Ss^1\times L_{\rho(\Gamma)})\setminus \Delta$. The  analyticity of the curvature implies that it is constant on $\mathcal C$ (consider the function along horizontal and vertical lines and the fact that $L_{\rho(\Gamma)}$ is without isolated points), and Lemma \ref{constant_curvature}  implies that $\rho$ is analytically Fuchsian.   \end{proof}

 \subsection{Exceptional minimal set and curvature}
We saw that the curvature is constant on $(L_{\rho(\Gamma)}\times \Ss^1\cup \Ss^1\times L_{\rho(\Gamma)})\setminus \Delta$, but we cannot have anything better than this. Indeed, we can construct metrics with non constant curvature that are preserved by non elementary Fuchsian groups. Since such a group $\Gamma$ preserves a volume form, any other preserved volume form is given by the product with an invariant function.

\begin{prop} \label{invariant_function} Let $ \Gamma \subset \PSL(2,\R)$ be a non elementary and non topologically transitive subgroup. Then there is a non constant smooth function $\sigma: \mathcal C \to \R$ that is $\Gamma$-invariant. \end{prop}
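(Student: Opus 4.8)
The plan is to exploit the fact that, under these hypotheses, $\Gamma$ falls into case (3) of the trichotomy above. Indeed, a non-discrete non-elementary subgroup of $\PSL(2,\R)$ has closure equal to $\PSL(2,\R)$ (every proper closed subgroup fixes a point of $\Ss^1$, a pair of points, or a point of $\h^2$), hence has dense orbits; so our $\Gamma$ must be discrete, and being non-elementary without dense orbits its limit set $L_\Gamma$ is a Cantor set. Write $\Omega=\Ss^1\setminus L_\Gamma$ for the domain of discontinuity; it is a countable disjoint union of open arcs (the \emph{gaps}) of finite total length, permuted by $\Gamma$. The key geometric observation is that for a gap $I$ the open set
$$U_I=\{(x,y)\in\mathcal C\mid x\in I,\ y\in I\}\subset I\times I\setminus\Delta$$
consists of the geodesics of $\h^2$ having both endpoints in $I$; the $U_I$ are pairwise disjoint, $\Gamma$ permutes them ($\gamma\cdot U_I=U_{\gamma I}$), and, crucially, they stay away from $L_\Gamma\times L_\Gamma$: since $(x,y)\in U_I$ forces $|x-y|\le\mathrm{length}(I)$, the $U_I$ accumulate inside $\mathcal C$ only on the removed diagonal $\Delta$.

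First I would build the invariant function on a single orbit of gaps. Fix a gap $I_0$ and let $H=\Stab_\Gamma(I_0)$. An orientation-preserving homeomorphism preserving the arc $I_0$ must fix both its endpoints, so $H$ consists of hyperbolic elements with common axis the boundary geodesic of $I_0$; being discrete, $H$ is trivial or infinite cyclic. I would then produce a smooth, non-negative, not identically zero, $H$-invariant function $\phi_0$ on $U_{I_0}$: if $H$ is trivial take a bump function with compact support in $U_{I_0}$; if $H=\langle h\rangle$ take a bump $\phi$ supported in a fundamental domain for the (properly discontinuous) action of $H$ on $U_{I_0}$ and set $\phi_0=\sum_{n\in\Z}\phi\circ h^{-n}$. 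Because $h$ translates along its axis, the support of $\phi_0$ accumulates in $\overline{U_{I_0}}$ only at the two corners $(a,a),(b,b)$, where $a,b$ are the endpoints of $I_0$; these lie on $\Delta$, so $\phi_0$ extends by $0$ to a smooth function on $\mathcal C$ supported in $\overline{U_{I_0}}$.

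Next I would transport $\phi_0$ across the orbit of $I_0$ and set it to zero elsewhere: for a gap $I=\gamma I_0$ define $\sigma|_{U_I}=\phi_0\circ\gamma^{-1}$, and $\sigma=0$ off $\bigcup_\gamma U_{\gamma I_0}$. This is well defined because $\phi_0$ is $H$-invariant (if $\gamma_1 I_0=\gamma_2 I_0$ then $\gamma_2^{-1}\gamma_1\in H$), and it is $\Gamma$-invariant by construction. It is non-constant: it is nonzero somewhere in $U_{I_0}$ but vanishes on every pair $(x,y)$ with $x,y\in L_\Gamma$ and $x\ne y$, none of which lies in any $U_I$.

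The main obstacle, and the final step, is the smoothness of $\sigma$ on all of $\mathcal C$, for which I would use a local finiteness argument. Near any $(x_0,y_0)\in\mathcal C$ choose a neighborhood on which $|x-y|\ge\delta>0$. A point of this neighborhood can lie in $U_I$ only if $\mathrm{length}(I)\ge\delta$, and since $\Omega$ has finite total length only finitely many gaps qualify; within each such gap the series defining $\phi_0$ is itself finite near the neighborhood, again because $h^n$ pushes supports toward $\Delta$. Thus near every point of $\mathcal C$ the function $\sigma$ is a finite sum of smooth pieces, each extending smoothly by zero, so $\sigma$ is smooth. The delicate points to check carefully are precisely that supports accumulate only on $\Delta$ and that no mass escapes toward $L_\Gamma\times L_\Gamma$; the restriction to geodesics with both endpoints in a \emph{single} gap is exactly what guarantees this.
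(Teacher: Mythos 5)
Your proof is correct, but it is not the route the paper takes as its primary construction. The paper supports its invariant function on the \emph{off-diagonal} rectangles $I_i\times I_j$ ($i\ne j$) and defines $\sigma$ canonically: $\sigma(x)$ is the product of the $\omega$-areas of the four rectangles joining $x$ to the corners of $I_i\times I_j$, where $\omega=\frac{4dx\wedge dy}{(x-y)^2}$ is the $\PSL(2,\R)$-invariant volume. Invariance is then automatic (no averaging, no choice of bump function), continuity at the boundary follows from the explicit cross-ratio formula for these areas, and smoothness is obtained by post-composing with a function $F$ constant near $0$. Your construction instead lives on the diagonal squares $I\times I\setminus\Delta$ and is an averaging argument: a bump function made $\Stab(I_0)$-invariant by summing over the (trivial or infinite cyclic) stabilizer, then transported over the orbit of gaps and extended by zero. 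The paper does mention this variant in one sentence at the end of its proof (``choose $\sigma$ arbitrarily on the squares $I_i\times I_i$ where $i$ is in a fundamental domain\dots''), but does not carry out the details; you supply exactly the two points that make it work, namely that the supports accumulate only on $\Delta$ (finite total length of the gaps forces $d(x,y)\le\mathrm{length}(I)$ on $U_I$, and the translation dynamics of the gap stabilizer pushes the support of the averaged bump to the corners $(a,a)$, $(b,b)$) and the resulting local finiteness. The trade-off: the paper's formula is choice-free and manifestly $\Gamma$-invariant, at the price of an explicit computation for continuity and the composition with $F$ for smoothness; yours is more elementary and flexible (it never uses the hyperbolic area or the cross-ratio), but all the work is concentrated in the support and local-finiteness analysis, which you correctly identify as the delicate step and handle properly.
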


\begin{proof}  Start by writing $\Ss^1\setminus L_{\Gamma}= \bigcup_{i\in \N}I_i$ as the union of its connected components. We start by setting $\sigma=0$ on $(L_{\rho(\Gamma)}\times \Ss^1\cup \Ss^1\times L_{\rho(\Gamma)})\setminus \Delta$ and on $I_i\times I_i\setminus \Delta$ for $i\in \N$. For $x\in I_i\times I_j$ with $i\ne j$, consider $R_1,R_2,R_3,R_4$ the four rectangles that have $x$ as one corner and a corner of $I_i\times I_j$ as the opposite corner (see Figure \ref{fig:invariant_functions}). Let $\sigma(x)=\omega(R_1)\omega(R_2)\omega(R_3)\omega(R_4)$ where $\omega$ is the volume form $\frac{4dx\wedge dy}{(x-y)^2}$. By using the explicit formula $\omega(\intff{a}{b}\times\intff{c}{d})=4\Log([a,b,c,d])$ where $[a,b,c,d]=\frac{a-c}{a-d}\frac{b-d}{b-c}$ is the cross-ratio, we see that $\sigma$ is continuous. The function $\sigma$ is smooth in the interior of rectangles $I_i\times I_j$, i.e. where it is non zero. If $F:\R \to \R$ is smooth and constant on a neighbourhood of $0$ sufficiently small so that $F\circ \sigma$ is not constant, then $F\circ \sigma$ is  $\Gamma$-invariant, non constant and smooth.\\
\indent There are many other ways of constructing invariant functions. We could set $\sigma(x)$ on $I_i\times I_i$ to be $F(\omega(R ))$ where $R$ is the rectangle amongst $R_1,R_2,R_3$ and $R_4$ defined above that is included in $\Ss^1\times \Ss^1\setminus \Delta$ (see Figure \ref{fig:invariant_functions}).\\
\indent Finally, we could also choose $\sigma$ arbitrarily on the squares $I_i\times I_i$ where $i$ is in a fundamental domain for the action of $\Gamma$ on the connected components of $\Ss^1\setminus L_\Gamma$, and let $\sigma$ be constant on rectangles $I_i\times I_j$ with $i\ne j$.
\end{proof}

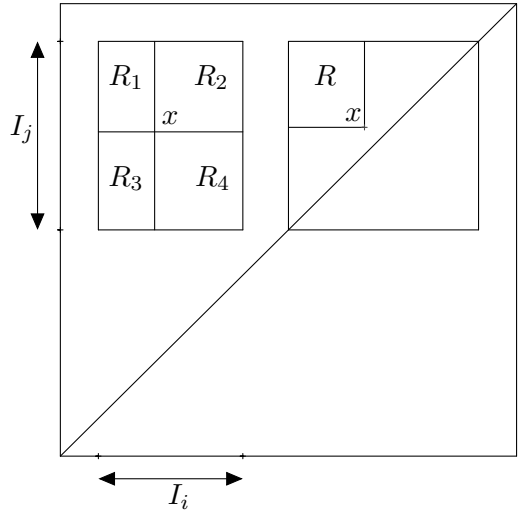
\begin{figure}[h]
\definecolor{uuuuuu}{rgb}{0.27,0.27,0.27}
\begin{tikzpicture}[line cap=round,line join=round,>=triangle 45,x=1.0cm,y=1.0cm]
\clip(-6.5,-1.9) rectangle (11.6,5.2);
\draw (-3,5)-- (3,5);
\draw (3,5)-- (3,-1);
\draw (3,-1)-- (-3,-1);
\draw (-3,-1)-- (-3,5);
\draw (-3,-1)-- (3,5);
\draw (-2.5,4.5)-- (-0.6,4.5);
\draw (-0.6,4.5)-- (-0.6,2);
\draw (-0.6,2)-- (-2.5,2);
\draw (-2.5,4.5)-- (-2.5,2);
\draw (-2.5,3.3)-- (-0.6,3.3);
\draw (-1.76,2)-- (-1.76,4.5);
\draw [<->] (-3.3,2) -- (-3.3,4.5);
\draw [<->] (-0.6,-1.3) -- (-2.5,-1.3);
\draw (-1.72,-1.24) node[anchor=north west] {$I_i$};
\draw (-3.8,3.66) node[anchor=north west] {$I_j$};
\draw (-2.5,4.32) node[anchor=north west] {$R_1$};
\draw (-1.38,4.32) node[anchor=north west] {$R_2$};
\draw (-2.5,2.98) node[anchor=north west] {$R_3$};
\draw (-1.36,2.98) node[anchor=north west] {$R_4$};
\draw (-1.8,3.7) node[anchor=north west] {$x$};
\draw (0,4.5)-- (0,2);
\draw (0,2)-- (2.5,2);
\draw (2.5,2)-- (2.5,4.5);
\draw (2.5,4.5)-- (0,4.5);
\draw (1,4.5)-- (1,3.36);
\draw (0,3.36)-- (1,3.36);
\draw (1.1,3.75) node[anchor=north east] {$x$};
\draw (0.2,4.32) node[anchor=north west] {$R$};
\begin{scriptsize}
\draw [color=black] (-2.5,-1)-- ++(-1.0pt,0 pt) -- ++(2.0pt,0 pt) ++(-1.0pt,-1.0pt) -- ++(0 pt,2.0pt);
\draw [color=black] (-0.6,-1)-- ++(-1.0pt,0 pt) -- ++(2.0pt,0 pt) ++(-1.0pt,-1.0pt) -- ++(0 pt,2.0pt);
\draw [color=black] (-3,2)-- ++(-1.0pt,0 pt) -- ++(2.0pt,0 pt) ++(-1.0pt,-1.0pt) -- ++(0 pt,2.0pt);
\draw [color=black] (-3,4.5)-- ++(-1.0pt,0 pt) -- ++(2.0pt,0 pt) ++(-1.0pt,-1.0pt) -- ++(0 pt,2.0pt);
\draw [color=uuuuuu] (1,3.36)-- ++(-1.0pt,0 pt) -- ++(2.0pt,0 pt) ++(-1.0pt,-1.0pt) -- ++(0 pt,2.0pt);
\end{scriptsize}
\end{tikzpicture}
\caption{Construction of invariant functions} \label{fig:invariant_functions}
\end{figure}

This result takes away all hope of finding a differential conjugacy with the invariance of the curvature (there are enough ways to produce an invariant function to ensure that there are preserved metrics with non constant curvature).

\subsection{Infinitesimal rigidity on the limit set} The question of differentiable conjugacy appears to be difficult and a way of dealing with a more simple problem is to linearize the conjugacy equation, i.e. considering the derivatives of the equations $\rho_1(\gamma)=h^{-1}\circ \rho_0(\gamma) \circ h$ where $\rho_1:\Gamma\to \Diff(\Ss^1) $ is the data and $h\in \Diff(\Ss^1)$ and $\rho_0:\Gamma\to \PSL(2,\R)$ are the unknowns. First and second order derivatives remain quite complicated, but the third order is more simple because elements of $\PSL(2,\R)$ can be defined as the solutions of  a third order differential equation. But since we know that it is not always possible to have a differentiable conjugacy on the whole circle (the proof will be exposed in sections \ref{sec:flow} and \ref{sec:non_fuchsian}), we can only look at subsets of the circle. In the counter example that we will construct, the conjugacy is differentiable along the limit set. This is interesting because the limit set is the subset of the circle that contains the non trivial dynamical behavior.\\
\indent We have  already seen that a volume form on $\mathcal C$ endows the horizontal and vertical lines with projective structures. We showed that in the constant curvature case, they give the same projective structure on $\Ss^1$. Before we give a statement of a result, we will  reformulate this.\\
\indent  We will denote by $E^1$ (resp. $E^2$) the sub-bundle of $T\mathcal C$ consisting of horizontal (resp. vertical) lines. If $p\in X$ and $u\in E^2( p)$, then $\alpha_u$ is the geodesic with initial condition $u$, and $\mathcal C^u_t$ is the horizontal circle passing through $\alpha_u(t)$. We will consider the holonomy map $H_t^u: \mathcal C^u_0 \to \mathcal C^u_t$ (which is defined everywhere on the circle except at two points, see Figure \ref{fig:holonomy}). The Schwarzian derivative $K_u(t)=S(H_t^u)$ relatively to the projective structure on $\mathcal C^u_t$ given by the Lorentzian metric is a field of quadratic forms on $E^1$, and we will mostly consider $k_u(t)=K_u(t)( p) \in S^2(E^1( p))$. Note that if $\rho$ were Fuchsian, then $k_u(t)$ would vanish everywhere (this is what we have shown in the constant curvature case). If it were $L$-differentially Fuchsian, then it would vanish when the base point of $u$ is in $L_{\Gamma}\times L_{\Gamma}$, therefore the following result can be interpreted as a rigidity result.

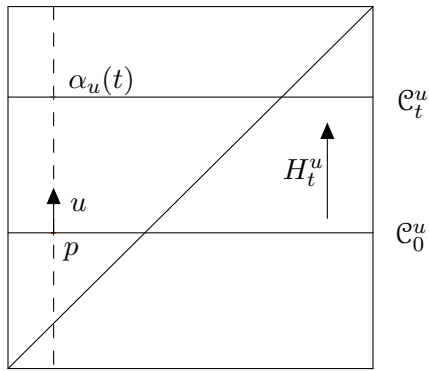
\begin{figure}[h]
\definecolor{uuuuuu}{rgb}{0.27,0.27,0.27}
\definecolor{ttqqqq}{rgb}{0.2,0,0}
\begin{tikzpicture}[line cap=round,line join=round,>=triangle 45,x=1.0cm,y=1.0cm,scale=0.6]
\clip(-8.5,-4.98) rectangle (16.9,6.3);
\draw (-2,5)-- (6,5);
\draw (6,5)-- (6,-3);
\draw (6,-3)-- (-2,-3);
\draw (-2,5)-- (-2,-3);
\draw (6,5)-- (-2,-3);
\draw (-2,0)-- (6,0);
\draw (-2,3)-- (6,3);
\draw [->] (-1,0) -- (-1,1);
\draw [dash pattern=on 5pt off 5pt] (-1,5)-- (-1,-3);
\draw [->] (5,0.32) -- (5,2.44);
\draw (-1,0) node[anchor=north west] {$p$};
\draw (-0.86,0.98) node[anchor=north west] {$u$};
\draw (-0.9,3.9) node[anchor=north west] {$\alpha_u(t)$};
\draw (3.8,1.86) node[anchor=north west] {$H^u_t$};
\draw (6.3,3.38) node[anchor=north west] {$\mathcal C^u_t$};
\draw (6.26,0.44) node[anchor=north west] {$\mathcal C^u_0$};
\begin{scriptsize}
\draw [color=ttqqqq] (-1,0)-- ++(-1.5pt,0 pt) -- ++(3.0pt,0 pt) ++(-1.5pt,-1.5pt) -- ++(0 pt,3.0pt);
\draw [color=uuuuuu] (-1,3)-- ++(-1.5pt,0 pt) -- ++(3.0pt,0 pt) ++(-1.5pt,-1.5pt) -- ++(0 pt,3.0pt);
\end{scriptsize}
\end{tikzpicture}
\caption{The holonomy map $H^u_t$}
\label{fig:holonomy}
\end{figure}

\begin{theo} If $\rho: \Gamma \to \Diff(\Ss^1)$ preserves a smooth volume form on $\mathcal C$, and if $\rho(\Gamma)$ is non elementary, then $k_u(t)=0$ for all $p\in L_{\rho(\Gamma)}\times L_{\rho(\Gamma)}\setminus \Delta$ and all $u\in E^2( p), t\in \R$. \end{theo}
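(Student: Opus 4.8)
The plan is to reduce the vanishing of $k_u(t)$ to a statement about the curvature $K$ on the limit set, using the explicit description of the isotropic geodesics and their projective parametrisations. The horizontal circle $\Ss^1\setminus\{y\}\times\{y\}$ is an isotropic geodesic, and integrating its geodesic equation $x''+\frac{\partial_x\omega}{\omega}(x')^2=0$ shows that its affine (hence projective) parameter is $F_y(x)=\int^x\omega(\xi,y)\,d\xi$. Since the vertical leaves $\{x\}\times\Ss^1$ are geodesics too, the holonomy $H^u_t$ is the identity in the $x$-coordinate, so in the projective charts $F_{y_0}$ and $F_{\eta(t)}$ (with $\alpha_u(t)=(x_0,\eta(t))$ and $p=(x_0,y_0)$) it reads $H^u_t=F_{\eta(t)}\circ F_{y_0}^{-1}$. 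The cocycle rule for the Schwarzian then gives, as quadratic forms on $E^1(p)$,
$$k_u(t)=\big(\Phi(x_0,\eta(t))-\Phi(x_0,y_0)\big)\,dx^2,\qquad \Phi:=\frac{\partial^2\Log\omega}{\partial x^2}-\tfrac12\Big(\frac{\partial\Log\omega}{\partial x}\Big)^2 .$$
This is exactly the computation of the Schwarzian of the transition maps $f_{y'}\circ f_y^{-1}$ carried out in \cite{these}, which I would recall rather than redo.

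Next I would establish the identity linking $\Phi$ to the curvature. A direct differentiation, using $K=\frac{2}{\omega}\frac{\partial^2\Log\omega}{\partial x\partial y}$, yields
$$\frac{\partial\Phi}{\partial y}=\frac{\omega}{2}\,\frac{\partial K}{\partial x}.$$
Consequently, proving $k_u(t)=0$ for every $t$ amounts to showing that $\Phi(x_0,\cdot)$ is constant on the interval $\Ss^1\setminus\{x_0\}$, that is, that $\frac{\partial K}{\partial x}(x_0,y)=0$ for all $y\ne x_0$.

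It then remains to prove this vanishing when $x_0\in L_{\rho(\Gamma)}$. Since $\rho$ is non elementary and topologically Fuchsian (Theorem \ref{topologic}), its limit set is infinite and $\rho(\Gamma)$ contains elements with exactly two fixed points, so Lemma \ref{curvature_limit_set} applies and $K$ equals a constant $K_0$ on $(L_{\rho(\Gamma)}\times\Ss^1\cup\Ss^1\times L_{\rho(\Gamma)})\setminus\Delta$. Fix $x_0\in L_{\rho(\Gamma)}$ and $y\ne x_0$. Because $L_{\rho(\Gamma)}$ has no isolated point, there are $x_n\in L_{\rho(\Gamma)}$ with $x_n\to x_0$; along this sequence $K(x_n,y)=K_0=K(x_0,y)$, so the difference quotients of the smooth function $x\mapsto K(x,y)$ vanish, forcing $\frac{\partial K}{\partial x}(x_0,y)=0$. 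Hence $\Phi(x_0,\cdot)$ is constant and $k_u(t)=0$ for all $u\in E^2(p)$ and $t\in\R$ as soon as the first coordinate of $p$ lies in $L_{\rho(\Gamma)}$, in particular for every $p\in (L_{\rho(\Gamma)}\times L_{\rho(\Gamma)})\setminus\Delta$.

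The main computational obstacle is the first step, the explicit identification of the holonomy in the projective charts and the resulting formula for its Schwarzian; fortunately this is available from \cite{these}. The conceptual crux is the last step: the curvature is constant only along the nowhere dense perfect set $L_{\rho(\Gamma)}$, yet smoothness together with the absence of isolated points is precisely what upgrades this into the vanishing of the transverse derivative $\frac{\partial K}{\partial x}$ on $L_{\rho(\Gamma)}$, which is what drives the rigidity. One should also take care to check that elements with two fixed points are present, so that Lemma \ref{curvature_limit_set} may be invoked.
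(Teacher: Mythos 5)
Your proof is correct, and it is essentially the alternative route that the paper itself points to in the remark following its proof (``this result could also have been proven with the explicit computation of the Schwarzian derivative in Chapter 4, section 3 of \cite{these}''). The paper's own argument never computes $S(H^u_t)$ explicitly: it writes $k_u(t)(v)=F(u,t)\langle u,v\rangle^2$, uses the equivariance $K_{\gamma.u}(t)=\gamma_*K_u(t)$ and the scaling relation $K_{au}(t)=K_u(at)$ to get $F(u,\lambda t)=\lambda^2F(u,t)$ at pairs of fixed points of hyperbolic elements, deduces $F(u,t)=c(u)t^2$ there, propagates this to $L_{\rho(\Gamma)}\times L_{\rho(\Gamma)}$ by density of such pairs and continuity of the $t$-derivatives of $F$, and finally kills $c(u)$ with the cocycle relation $H^u_{t+s}=H^{\alpha_u'(t)}_s\circ H^u_t$. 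Your computation ($S(F_y)=\Phi$, hence $k_u(t)=(\Phi(x_0,\eta(t))-\Phi(x_0,y_0))\,dx^2$, and $\partial_y\Phi=\tfrac{\omega}{2}\partial_xK$ --- all of which check out) replaces this dynamical argument by the single observation that $K$ is constant on $(L_{\rho(\Gamma)}\times\Ss^1)\setminus\Delta$ while $L_{\rho(\Gamma)}$ is perfect, so $\partial_xK$ vanishes there; this is shorter, needs only Lemma \ref{curvature_limit_set} rather than density of pairs of hyperbolic fixed points in $L_{\rho(\Gamma)}\times L_{\rho(\Gamma)}$, and in fact yields the stronger conclusion that $k_u(t)=0$ as soon as the first coordinate of $p$ lies in $L_{\rho(\Gamma)}$. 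One caveat, which you share with the paper: with the paper's non-standard definition of non elementary, ``non elementary and topologically Fuchsian'' does not by itself produce an element with two fixed points (a dense group of rotations has no finite orbit and only elliptic elements, and for such groups the conclusion genuinely fails for a generic invariant form $g(x-y)\,dx\wedge dy$); the existence of a hyperbolic element must be secured as in the proof of Theorem \ref{transitive}, i.e. by excluding the all-elliptic case, an assumption that is implicit in the paper's proof as well.
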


\begin{proof} If $\gamma\in \Gamma$, then $H_t^{\gamma.u}=\gamma\circ H_t^u\circ \gamma^{-1}$. Since the group $\Gamma$ acts isometrically with respect to the Lorentz metric, it preserves the projective structures, and the cocycle relation on the Schwarzian derivative gives us $K_{\gamma.u}(t)=\gamma_*K_u(t)$.\\
\indent Let us now remark that since the space $S^2(E^1( p))$ is one-dimensional, we can write $k_u(t)(v)=F(u,t) < u,v>^2$ for all $v\in E^1( p)$ (where $<\cdot,\cdot>$ is the Lorentz metric associated to the preserved volume form). The relation $K_{\gamma.u}(t)=\gamma_*K_u(t)$ gives us $F(\gamma.u,t) =F(u,t)$.\\
\indent If $a>0$, then we have $\alpha_{au}(t)=\alpha_u(at)$, which gives us $K_{au}(t)=K_u(at)$.\\
\indent We will now study the case where $p$ is a fixed point of $\gamma$. We write $p=(x,y)$ and $\gamma'(x)=\lambda^{-1}$, $\gamma'(y)=\lambda$, with $\lambda \ne 1$. Since $\gamma.u=\lambda u$, we have $k_u(\lambda t)=k_{\lambda u}(t)=k_{\gamma.u}(t)=\gamma_*k_u(t)=\lambda^2k_u(t)$, which implies  that $F(u,\lambda t)=\lambda^2F(u,t)$, therefore (because of the differentiability of the map $t\mapsto F(u,t)$) there is a real number $c(u)$ such that $F(u,t)=c(u)t^2$. \\
\indent We now wish to extend this to $L_{\rho(\Gamma)}\times L_{\rho(\Gamma)}\setminus \Delta$. If we fix $t\in \R$ and $k> 2$, the function $\frac{\partial^k}{\partial t^k}F(u,t)$ is invariant under the action of $\Gamma$, and it is equal to $0$ on all vectors tangent to fixed points of $\Gamma$, therefore by continuity it is equal to $0$ on $L_{\rho(\Gamma)}\times L_{\rho(\Gamma)}\setminus \Delta$, i.e. $F(u,t)=a(u)+b(u)t+c(u)t^2$. Since the coefficients are continuous, we have $a(u)=b(u)=0$, i.e. $F(u,t)=c(u)t^2$.
\newline 
\indent We will finally compute $k_u(t+s)$ in two ways in order to conclude. We choose $p\in L_{\rho(\Gamma)}\times L_{\rho(\Gamma)}\setminus \Delta$ and $t>0$ such that $\alpha_u(t)\in L_{\rho(\Gamma)}\times L_{\rho(\Gamma)}\setminus \Delta$. For $s\in \R$, we have $H^u_{t+s}=H_s^{\alpha_u'(t)}\circ H_t^u$, hence $k_u(t+s)=k_u(t)+(H_t^u)^*K_{\alpha_u'(t)}(s)$, which we can write: $$c(u)(t+s)^2<u,v>^2=c(u)t^2<u,v>^2+c(\alpha_u'(t))s^2<dH_t^u(v),\alpha_u'(t)>^2$$ \indent  By computing the derivative with respect to $s$ at $s=0$ on both sides, we obtain $c(u)=0$, i.e. $k_u(t)=0$.\end{proof}

Note that this result could also have been proven with the explicit computation of the Schwarzian derivative in Chapter 4, section 3 of \cite{these}.\\
\indent We can now prove Theorem \ref{rigidity_schwarzian}, that can be slightly reformulated:\\

\noindent \textbf{Theorem \ref{rigidity_schwarzian}.} \emph{If $\rho: \Gamma \to \Diff(\Ss^1)$ is a non elementary representation that preserves a smooth volume form on $\mathcal C$, then there is a projective structure on $\Ss^1$, equivalent to the standard structure on $\R\proj^1$, such that $S(\rho(\gamma))(x)=0$ for all $\gamma \in \Gamma$ and $x\in L_{\rho(\Gamma)}$.}

\begin{proof} Let $I$ be a connected component of  $\Ss^1\setminus L_{\rho(\Gamma)}$ and let $x_0,x_-,x_+\in I$ be such that $x_-<x_0<x_+<x_-$ and the interval consisting of points $x$ such that $x_-<x<x_+<x_-$ is included in $I$. We can choose a parametrization $\p:\Ss^1\setminus \{x_0\}$  of the horizontal geodesic $\Ss^1\setminus \{x_0\}\times  \{x_0\}$ such that the image $\p(\Ss^1\setminus \intoo{x_-}{x_+})$ is equal to $\intff{-1}{1}$. \\
\indent Let $\psi:\Ss^1\to \R\proj^1$ be a diffeomorphism such that the restriction of $\psi$ to $\Ss^1\setminus \intoo{x_-}{x_+}$ is equal to the restriction of $\p$. It equips $\Ss^1$ with a projective structure equivalent to the standard structure on $\R\proj^1$.\\
\indent Let $x\in L_{\rho(\Gamma)}$ and let $\gamma \in \Gamma$. Since $L_{\rho(\Gamma)} \subset \Ss^1\setminus \intoo{x_-}{x_+}$, the projective structure is defined by $\p$. Hence it is sent by $\gamma$ to a parametrization of another horizontal geodesic, and the Schwarzian derivative of $\gamma$ at $x$ is the Schwarzian derivative of the holonomy at $x$, and it is equal to $0$.
\end{proof}

\section{Actions on the circle and flows in dimension $3$} \label{sec:flow}

\indent The rest of this paper is dedicated to Theorem \ref{non_Fuchsian}, which we recall (convex cocompact groups will be defined in subsection \ref{subsection:convex}):\\

\noindent \textbf{Theorem \ref{non_Fuchsian}.} \emph{Let $\rho_0: \Gamma \to \emph{\PSL}(2,\R)$ be a convex cocompact representation and let $h\in \emph{\Homeo}(\Ss^1)$ be such that $h_{/L_{\rho_0(\Gamma)}}=Id$ and  $\rho_1 =h \rho_0 h^{-1} $ has values in $\emph \Diff(\Ss^1)$. Then $\rho_1$ preserves a $C^2$ volume form on $\mathcal C$.}\\

\indent The main ingredient in this proof is to construct a flow on a $3$-manifold (a deformation of the geodesic flow on $\rm{T}^1\h^2/\rho_0(\Gamma)$) that has a transverse structure given by $\rho_1$. This construction follows an idea of Ghys used in two different settings. The first one, found in \cite{Gh93}, was to show a rigidity theorem for actions of surface groups on the circle, and the second was the construction of (the only) exotic Anosov flows with smooth weak stable and weak unstable foliations on 3-manifolds  in \cite{Gh92}, called quasi-Fuchsian flows. However, Ghys used a local construction (given a certain atlas on  $\rm{T}^1\h^2/\rho_0(\Gamma)$), whereas we will take a global approach.\\
\indent We will see in \ref{subsec:constructing_example} that there are some non differentially Fuchsian examples satisfying the hypothesis of Theorem \ref{non_Fuchsian}.

\subsection{Hyperbolic flows} \label{subsec:hyperbolic_flows}  Let us recall a few basic notions of hyperbolic flows. Let $\p^t$ be a complete flow generated by a vector field $X$ on a manifold $M$. We say that a compact invariant set $K\subset M$ is \textbf{hyperbolic} if there are positive constants $C,\lambda$ and a decomposition of tangent spaces $T_xM=E^s_x\oplus E^u_x\oplus \R.X$ for each $x\in K$ such that:
$$ \forall x\in K~\forall v\in E^s_x~\forall t\ge 0~~\Vert D\p^t_x(v)\Vert \le C e^{-\lambda t} \Vert v\Vert $$
$$\forall x\in K~\forall v\in E^u_x~\forall t\le 0~~\Vert D\p^t_x(v)\Vert \le C e^{\lambda t} \Vert v\Vert
$$
\indent The norm $\Vert . \Vert$ denotes the norm given by any Riemannian metric on $M$ (since $K$ is compact, the definition does not depend on the choice of a Riemannian metric). If the whole manifold $M$ is a hyperbolic set, then we say that $\p^t$ is an Anosov flow.\\
\indent Let $\p^t$ be a smooth flow on a manifold $M$. If $K\subset M$ is a compact hyperbolic set and $x\in K$, then we define the \textbf{stable and unstable manifolds} through $x$:
$$ W^s(x)=\{ z\in M \vert d(\p^t(x),\p^t(z))\underset{t\to +\infty}{\longrightarrow} 0\}$$
$$W^u(x)=\{ z\in M \vert d(\p^t(x),\p^t(z))\underset{t\to -\infty}{\longrightarrow}  0\}$$
\indent The Stable Manifold Theorem states that they are submanifolds of $M$ tangent to $E^s$ and $E^u$ at $x$ (see \cite{HP}).\\
\indent The most important fact for us is that the limit $d(\p^t(x),\p^t(z))\to 0$ is a uniformly decreasing exponential: for all compact set $A$ and all $\e>0$, there is a constant $C'>0$ such that:
$$\forall x\in K ~\forall z\in W^s(x)\cap A~\forall t\ge 0~~ d(\p^t(x),\p^t(z))\le C' e^{-(\lambda-\e) t} $$
$$\forall x\in K ~\forall z\in W^u(x)\cap A~\forall t\le 0~~ d(\p^t(x),\p^t(z))\le C' e^{(\lambda-\e) t} $$
\indent We will denote by $W^s(K)$ (resp. $W^u(K)$) the union $W^s(K)=\bigcup_{x\in K} W^s(x)$ (resp. $W^u(K)=\bigcup_{x\in K} W^u(x)$).

\subsection{A cohomological reformulation}  

Searching for an invariant volume form is equivalent to solving a cohomological equation. Let $\omega_0$ be a volume form on $\mathcal C$. Any other volume form on $\mathcal C$ is a multiple of $\omega_0$, hence if $\gamma \in \Gamma$, then we can write $\rho(\gamma)^*\omega_0=e^{-\alpha_{\gamma}}\omega_0$.  The chain rule shows that $\alpha_{\gamma}$ satisfies the cocycle relation $\alpha_{\gamma'\gamma}=\alpha_{\gamma'}\circ \rho(\gamma) + \alpha_{\gamma}$.\\
\indent Let $\omega=e^{\sigma}\omega_0$ be a volume form on $\mathcal C$. We can compute the pull back $\rho(\gamma)^*\omega=e^{\sigma\circ\rho(\gamma)}\rho(\gamma)^*\omega_0=e^{\sigma\circ\rho(\gamma)-\sigma-\alpha_{\gamma}}\omega$, hence $\omega$ is preserved by $\Gamma$ if and only if $\sigma\circ \rho(\gamma)-\sigma = \alpha_{\gamma}$ for all $\gamma\in \Gamma$. In other words, we wish to show that the cocycle $\alpha_{\gamma}$ is a coboundary. \\
\indent The issue with this formulation of the problem is that we do not know much about the cohomology of $\Gamma$. We will now see how we can translate the problem to a cohomology equation for a hyperbolic flow, which is a much more simple situation. In this setting, a cocycle is a smooth  function $\alpha:M\to \R$ (where $M$ is the manifold on which we study a flow $\p^t$), and we look for a smooth function $\sigma :M\to \R$ such that $\sigma(\p^t(x))-\sigma(x)=\int_0^t\alpha(\p^s(x))ds$ for all $(x,t)\in M\times \R$.\\
\indent There is a first necessary condition for the existence of a solution: if $x\in \mathrm{Per}(\p)$, i.e. if there is $T>0$ such that $\p^T(x)=x$, then $\int_0^T\alpha(\p^s(x))ds=0$.  Livšic's Theorem states that this condition is sufficient in order to find a solution on a compact hyperbolic set.

\begin{theo} Let $\p^t$ be a smooth flow on a manifold $M$, and let $K$ be a compact hyperbolic set, such that the action on $K$ has a dense orbit. If $\alpha : K\to \R$ is a H\"older continuous function such that $\int_0^T\alpha(\p^s(x))ds=0$ for all $x\in K$ such that $\p^T(x)=x$, then there is a unique H\"older continuous function $\sigma :K\to \R$ such that $\sigma(\p^t(x))-\sigma(x)=\int_0^t\alpha(\p^s(x))ds$ for all $(x,s)\in K\times \R$. \end{theo}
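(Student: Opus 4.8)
The plan is to dispose of uniqueness immediately and then to reduce existence to a single uniform continuity estimate, whose proof rests on the Anosov closing lemma together with the exponential shadowing bounds recalled in \S\ref{subsec:hyperbolic_flows}.

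For uniqueness, suppose $\sigma_1$ and $\sigma_2$ both satisfy the cohomological equation. Their difference $\tau=\sigma_1-\sigma_2$ then satisfies $\tau(\p^t(x))=\tau(x)$ for all $(x,t)$, so $\tau$ is constant along every orbit. Being continuous and constant on the dense orbit, it is constant on $K$; hence the equation determines $\sigma$ up to an additive constant (and completely once its value is fixed at one point). For existence I would fix a point $x_0\in K$ whose orbit is dense; if this orbit is periodic the hypothesis gives the conclusion directly, so I assume it is not. The cohomological equation forces
$$\sigma(\p^t(x_0))=\int_0^t\alpha(\p^s(x_0))\,ds$$
on the orbit $O=\{\p^t(x_0)\}$, and this formula defines $\sigma$ unambiguously there. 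Everything then reduces to showing that $\sigma$ is Hölder-uniformly continuous on $O$: once this is known, $\sigma$ extends continuously to $\overline{O}=K$, and the cohomological equation survives passage to the limit because $\alpha$ and the flow are continuous.

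The heart of the argument, and the step I expect to be the main obstacle, is the estimate: there exist $\delta_0,C>0$ and $\beta\in\intoo{0}{1}$ such that whenever $y=\p^{a}(x_0)$ and $z=\p^{b}(x_0)$ satisfy $d(y,z)\le\delta_0$, one has $|\sigma(z)-\sigma(y)|\le C\,d(y,z)^\beta$. The difficulty is that $\sigma(z)-\sigma(y)=\int_a^b\alpha(\p^s(x_0))\,ds$ is an integral over a \emph{long} time interval $T=b-a$ (the time the dense orbit takes to return near itself), so there is no a priori reason for it to be small. Here I would invoke the Anosov closing lemma: since the orbit segment of length $T$ starting at $y$ almost closes up (its endpoint is $d(y,z)$-close to its starting point), there is a genuine periodic point $p$, of period close to $T$, whose orbit shadows this segment, staying exponentially close to it in forward time along $W^s$ and in backward time along $W^u$. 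The hypothesis gives $\int_0^{T_p}\alpha(\p^s(p))\,ds=0$, so it remains to bound the difference between this vanishing integral and $\int_0^{T}\alpha(\p^s(y))\,ds$. Using the uniform exponential estimates of \S\ref{subsec:hyperbolic_flows} together with the Hölder continuity of $\alpha$, this difference is dominated by a convergent geometric sum of the form $\sum_k e^{-\beta(\lambda-\e)k}$ scaled by $d(y,z)^\beta$, yielding the desired bound.

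Granting this estimate, $\sigma$ is Hölder on $O$ with exponent $\beta$, hence extends to a Hölder function on $K$ satisfying the cohomological equation, which completes the proof. The technical heart is thus entirely in the closing-lemma comparison: one must produce the shadowing periodic orbit and carefully account for the contribution of $\alpha$ along the pieces where the true orbit and the periodic orbit drift apart, controlling everything by $d(y,z)^\beta$. A point to watch is that the closing lemma requires a local product structure on $K$; this is automatic when $K=M$ and $\p^t$ is Anosov (the situation of the intended application) and holds more generally for locally maximal hyperbolic sets, so I would either assume this or record it as a standing hypothesis.
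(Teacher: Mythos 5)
The paper does not actually prove this statement: it is quoted as Liv\v{s}ic's theorem with a pointer to Katok--Hasselblatt, and your outline is precisely the standard argument given there --- define $\sigma$ by integrating $\alpha$ along a dense orbit, prove a uniform H\"older estimate on that orbit via the Anosov closing lemma and the exponential shadowing bounds, then extend by continuity; the strategy is sound and the estimate you defer (comparing $\int_0^T\alpha$ along the almost-closed segment with the vanishing integral over the shadowing periodic orbit, controlled by a convergent exponential sum times $d(y,z)^\beta$) is exactly the technical content of the cited proof. Your two caveats are correct and in fact sharpen the statement as printed: uniqueness can only hold up to an additive constant, and the closing lemma must produce a periodic point \emph{inside} $K$ (so that the hypothesis applies to it), which requires local maximality of $K$ --- satisfied in the paper's application, where the hyperbolic set is the nonwandering set of an Axiom A flow.
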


As stated, the proof can be found in \cite{KH} (Livšic's work in \cite{livsic} deals with Anosov flows on compact manifolds). We will discuss the different versions of  Livšic's Theorem (especially concerning  regularity conditions)  in section \ref{sec:spectrally}.\\
\indent However,  Livšic's Theorem will not be of any use in the proof of Theorem \ref{non_Fuchsian}, because we will already have a solution on the hyperbolic set (but we will use it in section \ref{sec:spectrally} for Theorem  \ref{spectrally}). Instead, we will show that given a solution on a compact hyperbolic set $K$, we can extend it to $W^s(K)\cup W^u(K)$. When translating the problem back to the action on $\mathcal C=\Ss^1\times \Ss^1\setminus \Delta$, this will give a volume form invariant at points of $L_\Gamma\times \Ss^1\cup\Ss^1\times L_\Gamma$, and there will still be some work involved in order to extend the solution to $\mathcal C$ (which is the content of section \ref{sec:non_fuchsian}).

\subsection{Convex cocompact groups and geodesic flows} \label{subsection:convex} Let $\Gamma\subset \PSL(2,\R)$ be a discrete non elementary subgroup such that the limit set $L_\Gamma$ is a Cantor set. The \textbf{convex hull} of $\Gamma$ is the subset $C_\Gamma$ of $\h^2$ bounded by geodesics joining fixed points of hyperbolic elements of $\Gamma$. We say that $\Gamma$ is \textbf{convex cocompact} if $C_\Gamma /\Gamma$ is compact. A particular case of Ahlfors' Finiteness Theorem (see \cite{ahlfors} or \cite{bers}) states that any finitely generated discrete subgroup of $\PSL(2,\R)$ with only hyperbolic elements is convex cocompact.\\
\indent If $\Gamma\subset \PSL(2,\R)$ is convex cocompact, then denote by $\p^t$ the geodesic flow on $\mathrm{T}^1\h^2/\Gamma$ (remark that even if $\h^2/\Gamma$ is not a manifold, the unit bundle $\mathrm{T}^1\h^2/\Gamma$ always is when $\Gamma$ is discrete).\\
\indent The \textbf{non wandering set} $\Omega_\p$ of a flow is the set of points $x$ such that there are sequences $x_n\to x$ and  $t_n\to \infty$ satisfying $\p^{t_n}(x_n)\to x$. For the geodesic flow,  $\Omega_\p$ can be described as follows: its lift to $\rm{T}^1\h^2$ is the set of vectors tangent to a geodesic that lies entirely in $C_\Gamma$. The important property of $\p^t$ is that it is an Axiom A flow: $\Omega_\p$ is a compact hyperbolic set for $\p^t$, and it is equal to the closure of periodic orbits $\mathrm{Per}(\p)$ (Axiom A flows are a generalization of Anosov flows that can be defined even on non compact manifolds). We will now use a presentation of the geodesic flow that is particularly convenient when we define perturbations.

\indent Let $\Sigma_3 = \{(x_-,x_0,x_+)\in (\Ss^1)^3 \vert x_-<x_0<x_+<x_-\}$ be the set of ordered triples of $\Ss^1$. We can identify $\rm{T}^1\h^2$ and $\Sigma_3$ in the following way: given a unit vector $v\in \rm{T}^1\h^2$, we consider $x_-$ and $x_+$ the limits at $-\infty$ and $+\infty$ of the geodesic given by $v$, and $x_0$ is the limit at $+\infty$ of the geodesic passing through the base point of $v$ in an orthogonal direction, oriented to the right of $v$ (see Figure \ref{fig:geodesics}).\\
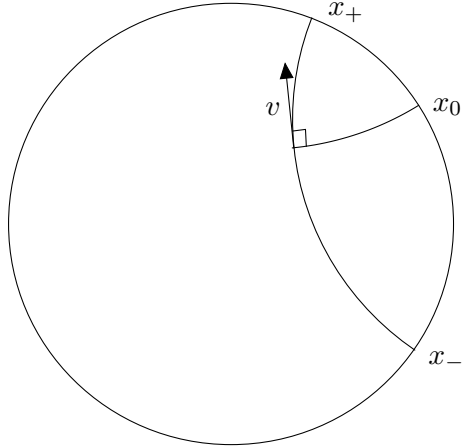
\begin{figure}[h]
\begin{tikzpicture}[line cap=round,line join=round,>=triangle 45,x=1cm,y=1cm,scale=0.55]
\clip(-9,-4.98) rectangle (16.96,6.3);
\draw(2.52,0.82) circle (5.32cm);
\draw [shift={(10.74,3.34)}] plot[domain=2.77:4.11,variable=\t]({1*6.75*cos(\t r)+0*6.75*sin(\t r)},{0*6.75*cos(\t r)+1*6.75*sin(\t r)});
\draw [shift={(3.3,9.63)}] plot[domain=4.81:5.27,variable=\t]({1*7.01*cos(\t r)+0*7.01*sin(\t r)},{0*7.01*cos(\t r)+1*7.01*sin(\t r)});
\draw [->] (4.02,2.65) -- (3.81,4.72);
\draw (3.1,4) node[anchor=north west] {$v$};
\draw (4.6,6.4) node[anchor=north west] {$x_+$};
\draw (7,-2) node[anchor=north west] {$x_-$};
\draw (7.1,4.1) node[anchor=north west] {$x_0$};
\draw (3.99,3.06)-- (4.3,3.1);
\draw (4.3,3.1)-- (4.32,2.69);
\end{tikzpicture}
\caption{Identification between $\rm{T}^1\h^2$ and $\Sigma_3$} \label{fig:geodesics}
\end{figure}

\indent On $\Sigma_3$, the geodesic vector field is a rescaling of the constant vector field $(0,1,0)$, and the action $\alpha$ of $\PSL(2,\R)$ is the diagonal action. The geodesic flow $\p^t$ is defined on the quotient manifold $M=\Sigma_3/\alpha(\Gamma)\approx \rm{T}^1\h^2/\Gamma$. The image of a point $(x_-,x_0,x_+)$ in $M$ is in $\Omega_\p$ if and only if $(x_-,x_+)\in L_{\Gamma}\times L_{\Gamma}$, and it is in $\rm{Per}(\p)$ if and only if  $(x_-,x_+)$ is the pair of fixed points of an element $\gamma\in \Gamma$.

\subsection{The flow associated to $\rho_1$}
From now on, we consider a convex cocompact representation $\rho_0:\Gamma \to \PSL(2,\R)$  and another representation $\rho_1:\Gamma \to \Diff(\Ss^1)$ such that there is $h\in \Homeo(\Ss^1)$ satisfying $h_{/L_{\rho_0(\Gamma)}}=Id$ and $\rho_1=h\rho_0 h^{-1}$. Let us start by remarking that $L_{\rho_0(\Gamma)}$ is a compact invariant set for $\rho_1$. Because of the uniqueness of the minimal invariant compact set, we see that $L_{\rho_1(\Gamma)}\subset L_{\rho_0(\Gamma)}$. Since the actions $\rho_0$ and $\rho_1$ restricted to $L_{\rho_0(\Gamma)}$ are equal and have dense orbits, we have $L_{\rho_1(\Gamma)}= L_{\rho_0(\Gamma)}$. We will  call this set $L_{\Gamma}$.\\
 \indent We are now going to construct a flow $\psi^t$ on a $3$-manifold $N$ that will have the same relation to $\rho_1$ as the geodesic flow $\p^t$ on $M=\rm{T}^1\h^2/\rho_0(\Gamma)$  has with $\rho_0$.  We  consider $\Sigma=\{(x_-,x_0,x_+)\in (\Ss^1)^3\vert x_-<h^{-1}(x_0)<x_+<x_-\}$, and the action  $\alpha_1$ of $\Gamma$ on $\Sigma$ given by:
$$\alpha_1(\gamma)(x_-,x_0,x_+)=(\rho_1(\gamma)(x_-),\rho_0(\gamma)(x_0),\rho_1(\gamma)(x_+))$$  \indent The quotient $N$ is a smooth manifold homeomorphic to $M$: consider the map $\tilde H :\Sigma_3\to \Sigma$ defined by $\tilde H(x_-,x_0,x_+)=(h(x_-),x_0,h(x_+))$. It is a homeomorphism satisfying $\tilde H\circ \alpha_0=\alpha_1\circ\tilde H$ that is  differentiable in restriction to  $L_{\Gamma}\times\Ss^1\times L_{\Gamma}$. It induces a homeomorphism $H:M\to N$.\\
\indent  The projection on $N$ of the constant vector field $(0,1,0)$ on $\Sigma$  can be reparametrized into a smooth flow $\psi^t$. The homeomorphism $H$ sends $\p^t$ to a reparametrization of $\psi^t$ and  is a diffeomorphism from $\Omega_{\p}$ to $\Omega_{\psi}$ (recall that  the non wandering set $\Omega_\Phi$ of the flow $\Phi^t$ is the set of points $x$ such that there are sequences $x_n\to x$ and  $t_n\to \infty$ satisfying $\Phi^{t_n}(x_n)\to x$). From this we deduce that $\Omega_{\psi}$ is a compact hyperbolic set for $\psi^t$. If the image $x\in N$ of $(x_-,x_0,x_+)\in \Sigma$ is in $\Omega_{\psi}$, then the stable (resp. unstable) manifold of $x$ is the set of images of points $(y_-,y_0,y_+)$ such that $y_+=x_+$ (resp. $y_-=x_-$).\\
\indent The classical result for solving cohomological equation for hyperbolic flows is Livšic's Theorem. However, it only provides solutions on the hyperbolic set, and we already have an invariant volume on $\Omega_{\psi}$ (because the flow $\psi^t$ and the geodesic flow $\p^t$ are differentially conjugate on their non wandering sets). The hyperbolicity gives us an extension to $W^s(\Omega_{\psi})\cup W^u(\Omega_{\psi})$, which consists of projections of points $(x_-,x_0,x_+)\in \Sigma$ such that $x_-\in L_{\Gamma}$ or $x_+\in L_{\Gamma}$.

\begin{lemma} \label{invariant_stable_manifold} There is a smooth volume form $\omega_1$ on $N$ that is invariant under $\psi^t$ at points of $W^s(\Omega_{\psi})\cup W^u(\Omega_{\psi})$. \end{lemma}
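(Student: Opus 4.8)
The plan is to translate the statement into the cohomological equation of the previous subsection and to propagate, along the stable and unstable manifolds, the solution we already possess on the hyperbolic set $\Omega_\psi$. Fix any smooth volume form $\omega_0$ on $N$ and let $X$ be the vector field generating $\psi^t$; writing $\mathcal{L}_X\omega_0=\beta\,\omega_0$ for a smooth function $\beta$, the volume form $\omega_1=e^\sigma\omega_0$ satisfies $\mathcal{L}_X\omega_1=e^\sigma(X\cdot\sigma+\beta)\omega_0$, so it is invariant at a point exactly when $X\cdot\sigma+\beta$ vanishes there. Hence it suffices to produce a smooth $\sigma:N\to\R$ with $X\cdot\sigma=-\beta$ on $W^s(\Omega_\psi)\cup W^u(\Omega_\psi)$. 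On $\Omega_\psi$ such a solution $\sigma_0$ is already at hand: as noted above, $\psi^t$ preserves a smooth volume form on the hyperbolic set $\Omega_\psi$, which amounts to a function $\sigma_0$ with $\sigma_0(\psi^t x)-\sigma_0(x)=-\int_0^t\beta(\psi^s x)\,ds$ for $x\in\Omega_\psi$.

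For $x\in W^s(\Omega_\psi)$ I choose $p\in\Omega_\psi$ with $x\in W^s(p)$ and set
\[
\sigma(x)=\sigma_0(p)+\int_0^{+\infty}\bigl(\beta(\psi^s x)-\beta(\psi^s p)\bigr)\,ds,
\]
and symmetrically, using $\int_{-\infty}^0$, on $W^u(\Omega_\psi)$. The integral converges because $\beta$ is Lipschitz and $d(\psi^s x,\psi^s p)\le C'e^{-(\lambda-\e)s}$ by the uniform contraction recalled in \ref{subsec:hyperbolic_flows}. A direct telescoping computation then gives $\sigma(\psi^t x)-\sigma(x)=-\int_0^t\beta(\psi^s x)\,ds$, hence $X\cdot\sigma=-\beta$ along $W^s$; the same estimate applied to two admissible base points $p,p'\in\Omega_\psi\cap W^s(x)$, together with $\sigma_0(\psi^t p')-\sigma_0(\psi^t p)\to 0$ (both orbits converging in the compact set $\Omega_\psi$), shows the value is independent of the choice of $p$. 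On the overlap $W^s(\Omega_\psi)\cap W^u(\Omega_\psi)=\Omega_\psi$ both formulas return $\sigma_0$, so $\sigma$ is well defined and continuous on the closed set $W^s(\Omega_\psi)\cup W^u(\Omega_\psi)$ (the locus $x_+\in L_\Gamma$ or $x_-\in L_\Gamma$).

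The remaining, and genuinely technical, point is regularity: one must show that $\sigma$ is smooth in the Whitney sense on $W^s(\Omega_\psi)\cup W^u(\Omega_\psi)$, so that Whitney's extension theorem produces a smooth $\sigma:N\to\R$ and $\omega_1=e^\sigma\omega_0$ answers the lemma. Here the global model on $\Sigma\subset(\Ss^1)^3$ is decisive: the weak stable and weak unstable leaves are the coordinate slices $\{x_+=\mathrm{const}\}$ and $\{x_-=\mathrm{const}\}$, so the invariant foliations are smooth. Differentiating the defining integral in the strong stable directions replaces the integrand by $D\beta(\psi^s x)\cdot D\psi^s(v)$ with $v\in E^s$, which decays like $e^{-\lambda s}$; thus every leafwise derivative converges and $\sigma$ is smooth along each leaf, the flow direction being pinned down by the cohomological equation itself. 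Transverse (Whitney) regularity along the Cantor direction $L_\Gamma$ is inherited from the smoothness of $\sigma_0$ on $\Omega_\psi$ and of the foliations. This regularity step, namely uniformly controlling \emph{all} derivatives of the integrals through the hyperbolic estimates, is where the main work lies; the mere existence of a continuous invariant extension is a soft consequence of hyperbolicity.
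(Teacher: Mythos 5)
Your reduction to the cohomological equation and your integral formula for $\sigma$ on $W^s(\Omega_{\psi})\cup W^u(\Omega_{\psi})$ are essentially those of the paper; the only difference is that the paper first normalizes $\omega_0$ so that it is already invariant at points of $\Omega_{\psi}$, which makes the infinitesimal cocycle $\alpha=\tfrac{\partial A}{\partial t}(0,\cdot)$ vanish on $\Omega_{\psi}$ and collapses your formula to $\sigma(x)=-\int_0^{\infty}\alpha(\psi^t(x))\,dt$, with no reference point $p$ and no well-definedness check needed. Your verification of the cocycle identity and of the independence of the choice of $p$ is correct, as is the continuity of $\sigma$.

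The gap is in the regularity step, which you rightly identify as the heart of the matter but then do not carry out. To conclude that $\sigma$ is smooth in the Whitney sense on the closed set $W^s(\Omega_{\psi})\cup W^u(\Omega_{\psi})$, one needs candidate derivatives in \emph{every} direction together with uniform Taylor estimates between nearby points of the set: locally $W^s(\Omega_{\psi})$ is the product of a two-dimensional weak stable leaf with the Cantor set $L_{\Gamma}$, and the Cantor factor is transverse to the leaves, i.e.\ it points in the \emph{unstable} direction. So the divided differences of $\sigma$ between two nearby stable leaves must be controlled, and differentiating the defining integral in such a direction produces terms $d\alpha_{\psi^t(x)}(d\psi^t_x(v))$ in which $\vert d\psi^t_x(v)\vert$ grows exponentially; the naive bound diverges. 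Your estimate only treats $v\in E^s$, where $d\psi^t_x(v)$ contracts and there is nothing to prove, and the appeal to regularity being \emph{inherited} from the smoothness of $\sigma_0$ and of the foliations is not an argument: $\sigma_0$ lives on $\Omega_{\psi}$ and controls nothing about the variation of $\sigma$ between leaves away from $\Omega_{\psi}$. The mechanism that makes the paper's proof work is precisely the normalization above: once $\alpha$ vanishes on $\Omega_{\psi}$, the fact that $\Omega_{\psi}$ is locally a product of perfect sets forces all derivatives of $\alpha$ to vanish on $\Omega_{\psi}$ as well, and a Taylor expansion of $d^k\alpha$ of order roughly $2k$ around the shadowing orbit $\psi^t(z)\in\Omega_{\psi}$ yields $k+1$ factors of order $e^{-t}$ against the $k$ factors of order $e^{t}$ coming from $d\psi^t$, so every differentiated integral still converges uniformly. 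Without this cancellation (or an equivalent one) the transverse derivatives of $\sigma$ are not controlled and Whitney's extension theorem cannot be invoked, so your argument as written does not establish the lemma.
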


\begin{proof}  The differentiable conjugacy on the non wandering set  implies that there is a smooth volume form $\omega_0$ on $N$ that is preserved by the flow at points of the non wandering set. Hence, if $\psi^{t*}\omega_0=e^{-A(t,x)}\omega_0$ and $\alpha(x)=\frac{\partial A}{\partial t}(0,x)$, then $\alpha =0$ on $\Omega_{\psi}$. We will now construct a smooth function $\sigma$ on $N$ such that $\sigma(\psi^t(x))-\sigma(x)=\int_0^t\alpha(\psi^s(x))ds$ for all $x\in W^s(\Omega_{\psi})\cup W^u(\Omega_{\psi})$, so that $\omega_1=e^\sigma \omega_0$ meets our requirements.\\ 
\indent If $x\in W^s(z)$ with $z\in \Omega_{\psi}$, and if we have found such a function $\sigma$, then $\sigma(\psi^t(x))\approx \sigma(\psi^t(z))=0$ for $t$ large enough, hence $\sigma(x)=-\int_0^{\infty}\alpha(\psi^t(x))dt$. We will use this formula as a definition of $\sigma$. If it is well defined, then it satisfies the cohomology equation. \\
\indent Let $C>0$ be such that $d(\psi^t(x),\psi^t(z))\le Ce^{-t}$ (locally $C$ can be chosen independently from $x$ and $z$). Let $k$ be a Lipschitz constant for $\alpha$ in a neighbourhood $U$ of $\Omega_{\psi}$. For $t$ such that $\psi^t(x)\in U$ (which is locally uniform in $x$), we have: $$\vert \alpha(\psi^t(x))\vert \le \underbrace{\vert \alpha(\psi^t(z))\vert}_{=0} + k\underbrace{d(\psi^t(x),\psi^t(z))}_{\le Ce^{-t}}$$
\indent This gives us uniform convergence, hence $\sigma$ is well defined and continuous. By applying the same reasoning with negative times, we define $\sigma$ on $W^u(\Omega_{\psi})$.\\
\indent We now wish to see that it is differentiable (i.e. it is the restriction to $W^s(\Omega_{\psi})\cup W^u(\Omega_{\psi})$ of a differentiable function). Since the problem of differentiation is local, we can assume that the underlying manifold is $\R^3$ (so that tangent vectors at $z$ and at $x$ can be identified). Let $k'$ be a Lipschitz constant for $d^2\alpha$ in $U$. For $t$ large enough, we have: \begin{eqnarray*} d\alpha_{\psi^t(x)}(d\psi^t_x(v))&= & \underbrace{d\alpha_{\psi^t(z)}(d\psi^t_x(v))}_{=0}  \\ &+&\int_0^1\underbrace{d^2\alpha_{\psi^t(z)+s(\psi^t(x)-\psi^t(z))}}_{\le k'Ce^{-t}}(\underbrace{\psi^t(x)-\psi^t(z)}_{\le Ce^{-t}},\underbrace{d\psi^t_x(v)}_{\le C'e^t})ds\end{eqnarray*} hence $$\vert d\alpha_{\psi^t(x)}(d\psi^t_x(v))\vert \le C''e^{-t}$$
and $\sigma$ is $C^1$. By iterating this reasoning (to estimate $d^k\alpha$ we have to use a Taylor development at order $2k$, so that we have $k$ terms dominated by $e^t$ and $k+1$ terms dominated by $e^{-t}$), we show that $\sigma$ is $C^{\infty}$.
\end{proof}

\section{Non Fuchsian examples} \label{sec:non_fuchsian}

\subsection{Going back from  $N$ to  $\mathcal C$} Now that we have found an invariant volume form on a larger set for the  flow $\psi^t$, we need to translate it in terms of the action on $\mathcal C$.
\begin{lemma} \label{back_to_C} If there is a $C^r$ volume form $v$ on $N$ preserved by $\psi^t$ at points of $W^s(\Omega_{\psi})\cup W^u(\Omega_{\psi})$, then there is a $C^r$ volume form $\omega_2$ on $\mathcal C$ preserved by $\rho_1(\Gamma)$  at points of $L_{\Gamma}\times \Ss^1\cup \Ss^1\times L_{\Gamma}$. \end{lemma}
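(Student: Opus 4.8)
The plan is to realize $\mathcal C$ as the space of orbits of the flow $\psi^t$ and to produce $\omega_2$ as the transverse area form induced by $v$. The basic observation is that the projection
$$\pi:\Sigma\to\mathcal C,\qquad (x_-,x_0,x_+)\mapsto (x_-,x_+),$$
is a smooth surjective submersion whose fibers are exactly the orbits of (the lift of) $\psi^t$ — the arcs obtained by letting $x_0$ vary — and that $\pi$ is $\Gamma$-equivariant, intertwining $\alpha_1$ with the diagonal action of $\rho_1(\Gamma)$ on $\mathcal C$, since the middle coordinate (on which $\rho_0$ acts) is forgotten.

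First I would work on the cover $\Sigma$. The volume form $v$ lifts to a $\Gamma$-invariant $C^r$ volume form $\tilde v$, and the generator of $\psi^t$ lifts to a smooth, nowhere vanishing field $X$ tangent to the fibers of $\pi$. I would then set $\beta=\iota_X\tilde v$, a $C^r$ two-form on $\Sigma$. Three properties are immediate: $\iota_X\beta=0$; $\beta$ is $\Gamma$-invariant, since $X$ and $\tilde v$ both are; and as the interior product of a volume form with a nonvanishing field, $\beta$ has kernel exactly $\R X$, hence restricts to a nondegenerate area form on every surface transverse to $X$. The flow-invariance of $v$ on $W^s(\Omega_\psi)\cup W^u(\Omega_\psi)$ becomes $\mathcal L_X\tilde v=0$ on $\pi^{-1}\bigl(W^s(\Omega_\psi)\cup W^u(\Omega_\psi)\bigr)$, whence $\mathcal L_X\beta=\iota_X\mathcal L_X\tilde v=0$ there; together with $\iota_X\beta=0$ this says that $\beta$ is \emph{basic} at those points.

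The decisive point is to see that $\beta$ is basic over the whole of $R:=(L_\Gamma\times\Ss^1\cup\Ss^1\times L_\Gamma)\setminus\Delta$. Here I would invoke the description recalled before: $W^s(\Omega_\psi)\cup W^u(\Omega_\psi)$ is precisely the set of classes of triples $(x_-,x_0,x_+)$ with $x_-\in L_\Gamma$ or $x_+\in L_\Gamma$. This condition does not involve $x_0$, so as soon as $p\in R$ the \emph{entire} fiber $\pi^{-1}(p)$ lies in $W^s(\Omega_\psi)\cup W^u(\Omega_\psi)$. Consequently $\mathcal L_X\beta=0$ and $\iota_X\beta=0$ hold at every point of $\pi^{-1}(p)$, so $\beta$ takes a constant transverse value along this fiber and determines a well-defined value $\bar\beta(p)$; doing this for all $p\in R$ yields a $C^r$ two-form $\bar\beta$ on $R$. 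Its $\rho_1(\Gamma)$-invariance follows from the $\Gamma$-invariance of $\beta$ and the equivariance of $\pi$ (using injectivity of $\pi^*$), giving $D_\gamma^*\bar\beta=\bar\beta$ on $R$, where $D_\gamma$ denotes the diagonal action of $\rho_1(\gamma)$.

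To finish, I would extend $\bar\beta$ to a volume form on all of $\mathcal C$. Choosing a smooth section $s:\mathcal C\to\Sigma$ of $\pi$ and setting $\omega_2=s^*\beta$ achieves this: since $\mathrm{Im}(ds)$ is transverse to $X$, $\omega_2$ is a $C^r$ volume form, and over $R$ the basicness of $\beta$ forces $s^*\beta=\bar\beta$, independently of $s$, so $\omega_2$ is preserved by $\rho_1(\Gamma)$ at every point of $R$. The values of $\omega_2$ off $R$ are unconstrained, which is comfortable since $R$ has empty interior ($L_\Gamma$ being a Cantor set). I expect the main obstacle to be this last regularity issue: because $\Sigma$ is cut out of $(\Ss^1)^3$ by the merely continuous map $h$, the fibers of $\pi$ have only continuously varying endpoints, so one must check with care that the smooth submersion $\pi$, whose fibers are intervals, nonetheless admits a globally smooth section — equivalently, that $\bar\beta$ really is the restriction of a $C^r$ form on $\mathcal C$. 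The identification of $\pi^{-1}(R)$ with $W^s(\Omega_\psi)\cup W^u(\Omega_\psi)$ in the third step is the conceptual crux, but it is this regularity bookkeeping that demands the most attention.
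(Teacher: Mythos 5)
Your proposal is correct and follows essentially the same route as the paper: lift the volume form to the triple space, use flow-invariance to see that the transverse density is constant along the fibers of the forgetful projection $(x_-,x_0,x_+)\mapsto(x_-,x_+)$ over $L_\Gamma\times\Ss^1\cup\Ss^1\times L_\Gamma$, and use the $\rho_1$-equivariance of that projection to get invariance of the induced $2$-form, extending by an arbitrary smooth section elsewhere (the paper takes $i_0(x_-,x_+)$ in the middle slot where you take $s^*\beta$). Your packaging via $\beta=\iota_X\tilde v$ and basic forms is a slightly cleaner bookkeeping of the paper's coordinate computation, since it automatically absorbs the reparametrizing factor of the generator of $\psi^t$; the smooth section you worry about does exist because the fibers are arcs, so local convex combinations can be patched.
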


\begin{proof}  We have defined a smooth volume form $\omega_1=e^{\sigma}\omega_0$ that is invariant at points of $W^s(\Omega_{\psi})\cup W^u(\Omega_{\psi})$. Let $\tilde \omega_1$ be its lift to $\Sigma_3$ and write: $$\tilde \omega_1=\tilde \omega_1(x_-,x_0,x_+)dx_-\wedge dx_0\wedge dx_+$$ \indent  If $x_-$ or $x_+$ is in $L_{\Gamma}$, then the image in $N$ is in $W^s(\Omega_{\psi})\cup W^u(\Omega_{\psi})$, and the invariance under the  flow $\psi^t$ gives us $\tilde \omega_1(x_-,x_0,x_+) = \tilde \omega_1(x_-,x'_0,x_+)$ for all $x'_0$ such that $(x_-,x'_0,x_+)\in \Sigma_3$. \\
\indent Choose a smooth map $i_0:\mathcal C\to \Ss^1$ such that $(x_-,i_0(x_-,x_+),x_+)\in \Sigma_3$ for all $(x_-,x_+)\in \mathcal C$ (such as a convex combination of $x_-$ and $x_+$), and let $\omega_2(x_-,x_+)=\tilde \omega_1(x_-,i_0(x_-,x_+),x_+)$ for $(x_-,x_+)\in \mathcal C$. If $x_-$ or $x_+$ is in $L_{\Gamma}$ and $\gamma\in \Gamma$, then the invariance under $\psi^t$ gives us: 
\begin{eqnarray*} &~& \omega_2(\rho_1(\gamma)(x_-),\rho_1(\gamma)(x_+)) \rho_1(\gamma)'(x_-)\rho_1(\gamma)'(x_+) \\&=& \tilde \omega_1 ( \rho_1(\gamma)(x_-), i_0(\rho_1(\gamma)(x_-),\rho_1(\gamma)(x_+)),\rho_1(\gamma)(x_+)) \rho_1(\gamma)'(x_-)\rho_1(\gamma)'(x_+)\\  &=& \tilde \omega_1 ( \rho_1(\gamma)(x_-), \rho_1(\gamma)(i_0(x_-,x_+)),\rho_1(\gamma)(x_+)) \rho_1(\gamma)'(x_-)\rho_1(\gamma)'(x_+)\\ &=& \tilde \omega_1(x_-,i_0(x_-,x_+),x_+) \\&=& \omega_2(x_-,x_+) \end{eqnarray*}
\indent We have defined a smooth volume form $\omega_2$ on $\mathcal C$ that is $\rho_1(\Gamma)$-invariant at points of $(L_{\Gamma}\times \Ss^1\cup \Ss^1\times L_{\Gamma})\setminus \Delta$.
\end{proof}

\subsection{Extension to vertical strips}

The first step in extending $\omega_2$ to all of $\mathcal C$ is to extend it to vertical strips delimited by elements of $L_\Gamma$, so that we only need to deal with invariance under one element of the group.

\begin{lemma} \label{vertical} Let $I$ be a connected component of $\Ss^1\setminus L_{\Gamma}$, and let $\gamma\in \Gamma$ be a generator of its stabilizer. There is a smooth volume form $\omega$ on $\overline I\times \Ss^1\setminus \Delta$ that is invariant by $\gamma$ and that is equal to $\omega_2$ on $L_{\Gamma}\times \Ss^1\cup \Ss^1\times L_{\Gamma}$. \end{lemma}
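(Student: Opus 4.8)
The plan is to treat this as the single‑diffeomorphism problem of Proposition \ref{hyperbolic}, restricted to the vertical strip $S=\overline I\times\Ss^1\setminus\Delta$ and with the prescribed boundary values $\omega_2$ imposed. Since $\Gamma$ is convex cocompact, the stabilizer of $I$ is generated by a hyperbolic element $\gamma$ whose two fixed points $a,b$ are exactly the endpoints of $I$; in particular $a,b\in L_\Gamma$, so $\rho_1(\gamma)$ agrees with $\rho_0(\gamma)$ near them and has exactly these two fixed points. Write $B=(L_\Gamma\times\Ss^1\cup\Ss^1\times L_\Gamma)\cap S$, which inside the strip is the union of the two vertical edges $\{a,b\}\times\Ss^1$ and the Cantor family of horizontal slices $\overline I\times L_\Gamma$. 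First I would record two facts that come for free: evaluating the invariance of $\omega_2$ at the corner $(a,b)\in L_\Gamma\times L_\Gamma$ gives $\gamma'(a)\gamma'(b)=1$, and $\gamma'(a)\neq1$ since $\gamma$ has only two fixed points (Lemma \ref{necessary}). Consequently the diagonal action of $\gamma$ on $S$ has exactly two fixed points $(a,b)$ and $(b,a)$, both hyperbolic saddles with eigenvalues $\gamma'(a)^{\pm1}$, whose stable and unstable sets are precisely the edges of the strip, all contained in $B$.

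The construction itself uses a fundamental band for $\gamma$ in the first coordinate. Choosing $x_0\in I$, the band $R=[x_0,\gamma x_0]\times\Ss^1$ is a fundamental domain for the $\gamma$‑action in the $x$‑variable, its translates $\gamma^n(R)$ tiling $I\times\Ss^1$ and accumulating onto the vertical edges as $n\to\pm\infty$. I would define $\omega$ on $R\setminus\Delta$ by a Whitney‑type extension, prescribing $\omega=\omega_2$ on the Cantor family of slices $[x_0,\gamma x_0]\times L_\Gamma$ and imposing the infinite‑order seam condition $\omega|_{\{x=\gamma x_0\}}=(\gamma^{-1})^*\omega|_{\{x=x_0\}}$, and then spread it by invariance, setting $\omega=(\gamma^{-n})^*\omega$ on $\gamma^n(R)$. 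The seam condition makes the result smooth across every seam $\{x=\gamma^n x_0\}$, and because $\omega_2$ is $\gamma$‑invariant on $B$, the spread form still equals $\omega_2$ on every horizontal slice. This produces a smooth, $\gamma$‑invariant volume form on the open strip $I\times\Ss^1\setminus\Delta$ with the correct values on the interior part of $B$.

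The main obstacle is the extension across the two vertical edges $\{a,b\}\times\Ss^1$, where the translates $\gamma^n(R)$ pile up, together with the verification that $\omega=\omega_2$ there. Writing a point with $x$ near $b$ as $(\gamma^n\xi,\gamma^n\eta)$ with $\xi\in[x_0,\gamma x_0]$ and $\eta=\gamma^{-n}y\to a$, invariance gives $\omega(\gamma^n\xi,\gamma^n\eta)=\omega(\xi,\eta)/\big((\gamma^n)'(\xi)(\gamma^n)'(\eta)\big)$, where $\omega(\xi,\eta)\to\omega_2(\xi,a)$ since $(\xi,a)\in\overline I\times L_\Gamma\subset B$; the denominator tends to $(\gamma'(a)\gamma'(b))^n=1$ up to bounded factors, precisely because of the spectral relation $\gamma'(a)\gamma'(b)=1$, which is what makes the limit finite and nonzero. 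Matching $\omega$ to $\omega_2$ on the horizontal edge $\overline I\times\{a\}\subset B$ and using the $\gamma$‑invariance of $\omega_2$ at the saddle corner $(b,a)$ then forces the limiting value to be exactly $\omega_2$ on $\{b\}\times\Ss^1$, and the symmetric argument handles $\{a\}\times\Ss^1$. The delicate point is smoothness rather than mere continuity at the edge: one must control all derivatives of $(\gamma^{-n})^*\omega$ as $n\to\infty$, balancing the expanding derivative factors against the contracting ones. This is exactly the exponential‑domination estimate already carried out in the proof of Lemma \ref{invariant_stable_manifold} (using $\gamma'(a)\neq1$ for genuine exponential rates), and I would invoke the same mechanism to conclude that $\omega$ is $C^\infty$ up to and on both vertical edges, completing the construction.
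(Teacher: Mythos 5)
Your construction is correct in outline but takes a genuinely different (essentially mirror-image) route from the paper. The paper takes its fundamental domain in the \emph{second} coordinate: it fixes $a\in L_\Gamma\setminus\overline I$, sets $\omega=\omega_2$ on $\overline I\times\intfo{a}{\rho_1(\gamma)(a)}$, spreads by equivariance over $\overline I\times(\Ss^1\setminus\overline I)$, and then handles $\overline I\times\overline I\setminus\Delta$ separately; the accumulation it must control is at the horizontal lines $\overline I\times\partial I$. You instead tile $I\times\Ss^1$ by translates of a band $\intff{x_0}{\gamma x_0}\times\Ss^1$ and face the accumulation at the vertical edges $\{a,b\}\times\Ss^1$. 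The more substantive difference is the tool used to tame the Jacobian products $(\gamma^{n})'(\xi)(\gamma^{n})'(\eta)$: the paper invokes Proposition \ref{hyperbolic} to produce an auxiliary form $\omega_\gamma$ invariant under $\rho_1(\gamma)$ on all of $\mathcal C$, rewrites the product as $\omega_\gamma(x_k,y_k)/\omega_\gamma(u_k,v_k)$, and uses that $\omega_2/\omega_\gamma$ is constant on the limiting axes; you use only the invariance of $\omega_2$ on $B$ at points with one coordinate in $L_\Gamma$, which does suffice for continuity --- writing $\omega_2(\xi,a)/(\gamma^{n})'(\xi)=\omega_2(\gamma^n\xi,a)\,\gamma'(a)^{n}$ and $(\gamma^{-n})'(y)\gamma'(b)^{-n}=\omega_2(b,y)/\omega_2(b,\gamma^{-n}y)$ and multiplying, the relation $\gamma'(a)\gamma'(b)=1$ makes everything cancel to give exactly $\omega_2(b,y)$. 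Your approach buys economy (no auxiliary $\omega_\gamma$, no separate treatment of $\overline I\times\overline I$); the paper's buys a cleaner reduction of the derivative estimates to a single comparison function.

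Two caveats. First, the smoothness at the edges is the crux of the whole lemma, and Lemma \ref{invariant_stable_manifold} does not apply verbatim: there the cocycle vanishes on a compact hyperbolic set and one sums along stable orbits, whereas here generic points of the strip escape to the diagonal under iteration and the relevant comparison is with the prescribed values of $\omega_2$ on the accumulating Cantor slices $\overline I\times L_\Gamma$. The correct mechanism is the one the paper carries out: subtract the exact equivariance relation at a nearby point of $B$, so that the Mean Value Theorem converts the offending factor (of size $\gamma'(a)^{n}$) into a product with a distance of size $\gamma'(a)^{-n}$. This is the same Taylor-expansion-beats-expansion idea as in Lemma \ref{invariant_stable_manifold}, so your plan is salvageable as stated, but it must be adapted rather than invoked. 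Second, $\gamma'(a)\ne1$ does not follow from Lemma \ref{necessary} as you cite it (that lemma assumes a volume form invariant on a full neighbourhood, while $\omega_2$ is only known to be invariant on $B$); it is instead a consequence of the hyperbolicity of $\Omega_\psi$ established in section \ref{sec:flow}, which forces the periodic orbit of $\gamma$ to be hyperbolic.
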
 
 

\begin{proof} By Proposition \ref{hyperbolic}, there is a smooth volume form $\omega_{\gamma}$ on $\mathcal C$ that is invariant under $\rho_1(\gamma)$.\\ 
\indent Let $a\in L_{\Gamma}\setminus \overline I$. The interval $\intfo{a}{\rho_1(\gamma)(a)}$ is a fundamental domain for the action of $\gamma$ on $\Ss^1 \setminus \overline I$, i.e. for every $y\in \Ss^1 \setminus \overline I$ there is a unique $n_y\in \Z$ such that $\rho_1(\gamma^{n_y})(y)\in \intfo{a}{\rho_1(\gamma)(a)}$. We set $\omega=\omega_2$ on $\overline I\times \intfo{a}{\rho_1(\gamma)(a)}$ and extend $\omega$ to  $ \overline I \times (\Ss^1\setminus \overline I)$ by using the equivariance formula: $$\frac{\omega(x,y)}{\omega_2(\rho_1(\gamma^{n_y})(x),\rho_1(\gamma^{n_y})(y))}= \rho_1(\gamma^{n_y})'(x) \rho_1(\gamma^{n_y})'(y)$$
\indent We have to show that $\omega$ is smooth. First, remark that it is continuous on $\overline I\times \intfo{a}{\rho_1(\gamma)(a)}$: if $(x_n,y_n)\to (a,y)$ with $\rho_1(\gamma)(x_n)\in \intfo{a}{\rho_1(\gamma)(a)}$, using  $a\in L_{\Gamma}$, we see that the volume $\omega_2$ is preserved at $(a,y)$ and we get: \begin{eqnarray*} \omega(x_n,y_n)&=&\omega_2(\rho_1(\gamma)(x_n),\rho_1(\gamma)(y_n))\rho_1(\gamma)'(x_n)\rho_1(\gamma)'(y_n)\\ &\to & \omega_2(\rho_1(\gamma)(a),\rho_1(\gamma)(y))\rho_1(\gamma)'(a)\rho_1(\gamma)'(y)\\ &  ~& ~~~~~~=\omega_2(a,y)=\omega(a,y) \end{eqnarray*}
\indent This shows that $\omega$ is continuous on $\overline I \times (\Ss^1\setminus \overline I)$. For the derivatives, we have:
\begin{eqnarray*} \frac{\partial \omega}{\partial x}(x_n,y_n)&=& \frac{\partial \omega_2}{\partial x}(\rho_1(\gamma)(x_n),\rho_1(\gamma)(y_n)) \rho_1(\gamma)'(x_n)^2\rho_1(\gamma)'(y_n) \\& & ~~~+\omega_2(\rho_1(\gamma)(x_n),\rho_1(\gamma)(y_n))\rho_1(\gamma)''(x_n)\rho_1(\gamma)'(y_n)\\
&\to & \frac{\partial \omega_2}{\partial x}(\rho_1(\gamma)(a),\rho_1(\gamma)(y)) \rho_1(\gamma)'(a)^2\rho_1(\gamma)'(y)\\ & & ~~~ + \omega_2(\rho_1(\gamma)(a),\rho_1(\gamma)(y))\rho_1(\gamma)''(a)\rho_1(\gamma)'(y)\\
& & ~~~=\frac{\partial \omega_2}{\partial x}(a,y) = \frac{\partial \omega}{\partial x}(a,y)
\end{eqnarray*}
\indent The last line comes from the fact that the derivatives of $\omega_2$ satisfy the associated equivariance relations on $L_{\Gamma}\times \Ss^1\cup \Ss^1\times L_{\Gamma}$. This is true because all points of $L_{\Gamma}$ are accumulation points (it is a Cantor set). The same can be applied to all the derivatives, which shows that $\omega$ is smooth on $\overline I \times (\Ss^1\setminus \overline I)$. \\
\indent If $(x_k,y_k)\to (x,y)\in \mathcal C$ with $y\in \partial I$, then set $n_k=n_{y_k}$, as well as $u_k=\rho_1(\gamma^{n_k})(x_k)$ and $v_k=\rho_1(\gamma^{n_k})(y_k)$. By definition, we have: $$\omega(x_k,y_k)=\omega_2(u_k,v_k)\rho_1(\gamma^{n_k})'(x_k)\rho_1(\gamma^{n_k})'(y_k)$$
\indent Since $\omega_{\gamma}$ is invariant under $\rho_1(\gamma)$, we have: $$\rho_1(\gamma^{n_k})'(x_k)\rho_1(\gamma^{n_k})'(y_k) = \frac{\omega_{\gamma}(x_k,y_k)}{\omega_{\gamma}(u_k,v_k)}$$
\indent These two equalities give us: $$\omega(x_k,y_k) =\frac{\omega_2(u_k,v_k)}{\omega_{\gamma}(u_k,v_k)}\omega_{\gamma}(x_k,y_k)$$
\indent The continuity of $\omega_{\gamma}$ gives us $\omega_{\gamma}(x_k,y_k)\to \omega_{\gamma}(x,y)$.\\
\indent Since $y_k\to y\in \partial I$, we have $n_k\to \infty$ and $u_k\to u$ where $u$ is the other extremal point of $I$. By using the uniform continuity of $\omega_2$ and $\omega_{\gamma}$ on $\overline I \times \intff{a}{\rho_1(\gamma)(a)}$, we obtain:
$$\omega(x_k,y_k) \sim \frac{\omega_2(u,v_k)}{\omega_{\gamma}(u,v_k)} \omega_{\gamma}(x,y)$$

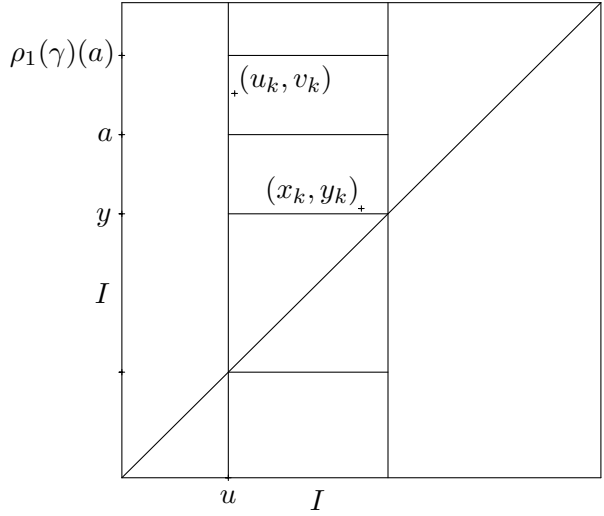
\begin{figure}[h]
\begin{tikzpicture}[line cap=round,line join=round,>=triangle 45,x=1.0cm,y=1.0cm,scale=0.7]
\clip(-4.9,-4.7) rectangle (16.9,5.4);
\draw (-1,5)-- (8,5);
\draw (8,5)-- (-1,-4);
\draw (-1,-4)-- (-1,5);
\draw (-1,-4)-- (8,-4);
\draw (8,-4)-- (8,5);
\draw (1,5)-- (1,-4);
\draw (4,-4)-- (4,5);
\draw (1,1)-- (4,1);
\draw (4,-2)-- (1,-2);
\draw (1,4)-- (4,4);
\draw (1,2.5)-- (4,2.5);
\draw (-1.7,-0.12) node[anchor=north west] {$I$};
\draw (-1.65,2.8) node[anchor=north west] {$a$};
\draw (-1,1.3) node[anchor=north east] {$y$};
\draw (-3.3,4.5) node[anchor=north west] {$\rho_1(\gamma)(a)$};
\draw (1,4) node[anchor=north west] {$(u_k,v_k)$};
\draw (1.5,1.9) node[anchor=north west] {$(x_k,y_k)$};
\draw (2.34,-4.04) node[anchor=north west] {$I$};
\draw (0.65,-4.04) node[anchor=north west] {$u$};
\begin{scriptsize}
\draw [color=black] (-1,2.5)-- ++(-1.5pt,0 pt) -- ++(3.0pt,0 pt) ++(-1.5pt,-1.5pt) -- ++(0 pt,3.0pt);
\draw [color=black] (-1,4)-- ++(-1.5pt,0 pt) -- ++(3.0pt,0 pt) ++(-1.5pt,-1.5pt) -- ++(0 pt,3.0pt);
\draw [color=black] (-1,1)-- ++(-1.5pt,0 pt) -- ++(3.0pt,0 pt) ++(-1.5pt,-1.5pt) -- ++(0 pt,3.0pt);
\draw [color=black] (-1,-2)-- ++(-1.5pt,0 pt) -- ++(3.0pt,0 pt) ++(-1.5pt,-1.5pt) -- ++(0 pt,3.0pt);
\draw [color=black] (3.5,1.1)-- ++(-1.5pt,0 pt) -- ++(3.0pt,0 pt) ++(-1.5pt,-1.5pt) -- ++(0 pt,3.0pt);
\draw [color=black] (1,-4)-- ++(-1.5pt,0 pt) -- ++(3.0pt,0 pt) ++(-1.5pt,-1.5pt) -- ++(0 pt,3.0pt);
\draw [color=black] (1.12,3.28)-- ++(-1.5pt,0 pt) -- ++(3.0pt,0 pt) ++(-1.5pt,-1.5pt) -- ++(0 pt,3.0pt);
\end{scriptsize}
\end{tikzpicture}
\caption{Defining $\omega$ on vertical strips} \label{fig:vertical}
\end{figure}

\indent We now only have to deal with the restrictions of $\omega_2$ and $\omega_{\gamma}$  to the axes $\{u\} \times \Ss^1\cup \Ss^1\times \{y\}$ (see Figure \ref{fig:vertical}), where continuous volume forms invariant under $\rho_1(\gamma)$ are unique up to multiplication by a constant: there is $\lambda>0$ such that $\omega_2(s,t)=\lambda \omega_{\gamma}(s,t)$ whenever $s=u$ or $t=y$. We can finally conclude: $$\omega(x_k,y_k) \to \lambda \omega_{\gamma}(x,y) = \omega_2(x,y)=\omega(x,y)$$
\indent We have shown that $\omega$ is continuous on $(\overline I \times \Ss^1\setminus I )\setminus \Delta$. For the derivatives., we will use the notation $f_x=\frac{\partial \Log \omega}{\partial x}$ and define $f_y$, $f_{xy}$ and so on in the same way. We also define $f_x^\gamma$, $f_y^\gamma$, $f_{xy}^\gamma$, etc\dots ~the derivatives of $\Log \omega_\gamma$. The equivariance relation for $f_x$ is:
$$f_x( x,y)=f_x(\rho_1(\gamma)(x),\rho_1(\gamma)(y))\rho_1(\gamma)'(x) +\frac{\rho_1(\gamma)''(x)}{\rho_1(\gamma)'(x)}$$
\indent We keep the same notations $u_k$, $v_k$ as above, and find:
$$f_x(x_k,y_k)-f_x^\gamma(x_k,y_k) =\rho_1(\gamma^{n_k})'(x_k)(f_x(u_k,v_k)-f_x^\gamma(u_k,v_k))$$
\indent The Mean Value Theorem gives us $u'_k,u''_k \in \intff{u}{u_k}$ such that:
$$f_x(u_k,v_k) - f_x(u,v_k)=(u_k-u) f_{xx}(u'_k,v_k)$$ \indent And:
$$f_x^\gamma(u_k,v_k) - f_x^\gamma(u,v_k)=(u_k-u) f_{xx}^\gamma(u''_k,v_k)$$
\indent The forms $\omega$ and $\omega_\gamma$ are proportional on the axis $\{u\}\times \Ss^1\setminus \{u\}$. This implies that $f_x(u,v_k)=f_x^\gamma(u,v_k)$ (the multiplicative constant disappears because we consider the derivative of the logarithm). Finally, we obtain:
$$f_x(x_k,y_k)-f_x^\gamma(x_k,y_k) =\underbrace{ \rho_1(\gamma^{n_k})'(x_k) (u-u_k)}_{\textrm{bounded}} (\underbrace{f_{xx}(u'_k,v_k)-f_{xx}^\gamma(u''_k,v_k)}_{\to 0})$$
\indent Since $f_x^\gamma$ is continuous, we see that $f_x$ also is. The same technique (applying several times the Mean Value Theorem to get rid of the term $\rho_1(\gamma^{n_k})'(x_k)$ or $\rho_1(\gamma^{n_k})'(y_k)$ which explodes) shows that $\omega$ is smooth on $(\overline I \times \Ss^1\setminus I )\setminus \Delta$.\\
\indent Finally, we can extend $\omega$ to $\overline I \times \Ss^1 \setminus \Delta$ in a similar manner: we fix $\omega$ on a fundamental domain $\intfo{b}{\rho_1(\gamma)(b)}\times \overline I \setminus \Delta$ for some $b\in I$, making sure that the derivatives on the boundary allow the extension on $\overline I \times \overline I\setminus \Delta$ to be smooth.
\end{proof}

\subsection{From vertical strips to $\mathcal C$}

We can now extend $\omega$ to $\mathcal C$. Getting an invariant volume form is not complicated, however its regularity requires some work.
\subsubsection{Continuity} Our proof of the regularity of $\omega$ on vertical strips relied on the existence of a smooth invariant form by any element of $\Gamma$. To deal with the invariance under the whole group, we will need a different method.
\begin{prop} \label{continuous} There is a continuous invariant form $\omega$ on $\mathcal C$ that is invariant under $\rho_1(\Gamma)$ and that is equal to $\omega_2$ on $L_{\Gamma}\times\Ss^1\cup\Ss^1\times L_{\Gamma}$. \end{prop}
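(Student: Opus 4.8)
The plan is to glue the local pieces we already have—the form $\omega_2$ on the axes and the forms produced by Lemma \ref{vertical} on vertical strips—into a single $\rho_1(\Gamma)$-invariant form, and then to reduce the whole statement to a single continuity estimate at the ``deep'' points of the Cantor set $L_\Gamma$.

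First I would organize the gluing. Because $\rho_0$ is convex cocompact, there are only finitely many $\Gamma$-orbits of connected components of $\Ss^1\setminus L_\Gamma$; I fix representatives $I_1,\dots,I_m$ and, for each, the smooth form $\omega^{(k)}$ on $\overline{I_k}\times\Ss^1\setminus\Delta$ given by Lemma \ref{vertical}, invariant under the generator of $\Stab(I_k)$ and equal to $\omega_2$ on the axes. For an arbitrary gap $I=\rho_1(\gamma)(I_k)$ I set $\omega$ on the strip $\overline I\times\Ss^1\setminus\Delta$ to be the pushforward $(\rho_1(\gamma))_\ast\omega^{(k)}$; this is well defined because a different choice of $\gamma$ differs by an element of $\Stab(I_k)$, under which $\omega^{(k)}$ is invariant. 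On $L_\Gamma\times\Ss^1$ I keep $\omega=\omega_2$. Since the closed strips over distinct gaps are pairwise disjoint and each meets the axes only along $\partial I\times\Ss^1$, where by Lemma \ref{vertical} the strip form coincides with $\omega_2$, this defines $\omega$ consistently on all of $\mathcal C$, and the orbit-wise definition together with the fact that $\rho_1$ is a homomorphism makes $\omega$ invariant under the whole group $\rho_1(\Gamma)$ and equal to $\omega_2$ on $L_\Gamma\times\Ss^1\cup\Ss^1\times L_\Gamma$.

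It then remains to prove continuity. On the interior of each strip $\omega$ is smooth, and the continuity of $\omega$ as one approaches the axes from inside a fixed gap is exactly what Lemma \ref{vertical} establishes. The only case left is a sequence $(x_n,y_n)\to(\xi,\eta)$ with $\xi\in L_\Gamma$ a point that is not an endpoint of any gap, reached through a sequence of distinct gaps $I_{m_n}\ni x_n$ with $\mathrm{diam}\,I_{m_n}\to 0$ (every other approach is covered by the continuity of $\omega_2$). Choosing an endpoint $a_n\in\partial I_{m_n}\subset L_\Gamma$, we have $a_n\to\xi$ and $(a_n,y_n)$ on the axis, so $\omega(a_n,y_n)=\omega_2(a_n,y_n)\to\omega_2(\xi,\eta)=\omega(\xi,\eta)$; hence it suffices to show that the variation $\omega(x_n,y_n)-\omega(a_n,y_n)$ across the shrinking gap tends to $0$.

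I expect this last estimate to be the main obstacle. Writing $I_{m_n}=\rho_1(\gamma_n)(I_{k_n})$ with $k_n$ in the finite set of representatives and pulling back by $\rho_1(\gamma_n)^{-1}$, one expresses both $\omega(x_n,y_n)$ and $\omega(a_n,y_n)$ through the same Jacobian cocycle of $\rho_1(\gamma_n)$ applied to the fixed reference form $\omega^{(k_n)}$ evaluated at $u_n=\rho_1(\gamma_n)^{-1}(x_n)$, $b_n=\rho_1(\gamma_n)^{-1}(a_n)\in\overline{I_{k_n}}$ and $v_n=\rho_1(\gamma_n)^{-1}(y_n)$. The difference is then governed by the variation of finitely many fixed smooth forms, damped by the contraction of $\rho_1(\gamma_n)$ on $I_{m_n}$. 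Making this precise requires a uniform bounded-distortion statement: since $\rho_1$ coincides with the convex cocompact group $\rho_0\subset\PSL(2,\R)$ on $L_\Gamma$, the nonlinearity of the maps $\rho_1(\gamma_n)$ across the gaps is uniformly controlled, and the north--south dynamics of $\rho_1(\gamma_n)^{-1}$ carries $y_n$ (which stays away from the repelling point $\xi$) into a compact part of the reference region, bounded away from the diagonal. Combining this with the uniform modulus of continuity of the finitely many reference forms and with $\mathrm{diam}\,I_{m_n}\to0$ should yield $\omega(x_n,y_n)-\omega(a_n,y_n)\to0$, and this is precisely the point where convex cocompactness---through finiteness of gap orbits and uniform hyperbolicity---is indispensable.
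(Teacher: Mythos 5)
Your gluing step is exactly the paper's: finitely many orbit representatives of gaps (convex cocompactness), the forms from Lemma \ref{vertical} on the closed reference strips, pushforward by $\rho_1(\gamma)$ to all other strips, well-definedness via invariance under the stabilizer, and agreement with $\omega_2$ on the axes. The reduction of continuity to sequences entering $L_\Gamma$ through shrinking gaps is also the right move. The problem is the last estimate, which you correctly identify as the main obstacle but do not actually resolve. Writing $\omega(x_n,y_n)=\omega^{(k_n)}(u_n,v_n)\,\rho_1(\gamma_n)'(x_n)\rho_1(\gamma_n)'(y_n)$, everything hinges on the Jacobian product $\rho_1(\gamma_n)'(x_n)\rho_1(\gamma_n)'(y_n)$, and your proposed control --- ``a uniform bounded-distortion statement'' holding ``since $\rho_1$ coincides with $\rho_0$ on $L_\Gamma$'' --- is a non sequitur: agreeing with a Fuchsian group on the Cantor set says nothing about the derivatives of $\rho_1(\gamma_n)$ at the interior points $x_n$ of the gaps, where the conjugacy $h$ is only a homeomorphism. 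A distortion bound for the family $\{\rho_1(\gamma_n)\}$ restricted to gaps would be a genuine lemma needing its own proof (e.g. via summability of the lengths of the intermediate images of the gap under suffixes of $\gamma_n$ and $C^2$ bounds on the finitely many generators), and even granting it you would only get that $\omega(x_n,y_n)$ is \emph{bounded}; you would still have to identify the limit of the ratio $\rho_1(\gamma_n)'(x_n)/\rho_1(\gamma_n)'(a_n)$, since $u_n$ and $b_n=\rho_1(\gamma_n)(a_n)$ converge to different points of $\overline{I_1}$ and the factor $\omega^{(k_n)}(u_n,v_n)/\omega^{(k_n)}(b_n,v_n)$ does not tend to $1$.

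The paper's device for precisely this difficulty is a multiplicative four-point combination in which the Jacobians cancel identically: choosing auxiliary points $x'_k,y'_k\in L_\Gamma$, the invariance relation gives
\[
\frac{\omega(x_k,y_k)}{\omega(x_k,y'_k)}\,\frac{\omega(x'_k,y'_k)}{\omega(x'_k,y_k)}
=\frac{\omega(u_k,v_k)}{\omega(u_k,v'_k)}\,\frac{\omega(u'_k,v'_k)}{\omega(u'_k,v_k)},
\]
with no derivative of $\rho_1(\gamma_k)$ appearing on either side. The left side is asymptotic to $\omega(x_k,y_k)/\omega(x,y)$ because $x'_k\to x$ and $y'_k$ can be taken in $L_\Gamma$; the right side involves only values of $\omega$ at points of a fixed compact subset of $\mathcal C$, and the convergence property of the sequence $\gamma_k$ (all of $v_k,v'_k$ tend to the same point $N$) forces its only possible limit to be $1$. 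This cancellation, not a distortion estimate, is the missing key idea; without it or an equivalent substitute your proof does not go through.
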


\begin{proof} The action of $\Gamma$ on the set of connected components of $\Ss^1\setminus L_{\Gamma}$ has a finite number of orbits (each orbit correspond to a half cylinder in the surface $\h^2/\rho_0(\Gamma)$). Let $I_1,\dots ,I_n$ be a choice of an interval of each orbit. Note that the stabilizer of $I_i$ is always non empty (a generator of the stabilizer corresponds to a closed geodesic bounding a half cylinder in the surface $\h^2/\rho_0(\Gamma)$). By Lemma \ref{vertical}, there is a smooth volume form $\omega$ on $\overline I_i\times \Ss^1 \setminus \Delta$ that is equal to $\omega_2$ in restriction to $L_{\Gamma}\times \Ss^1\cup \Ss^1\times L_{\Gamma}$ and that is invariant under the stabilizer of $I_i$.  If $\gamma \in \Gamma$, then we define $\omega$ on $\rho_1(\gamma) (\overline I_i) \times \Ss^1\setminus \Delta$ to be $\rho_1(\gamma)_*\omega$. This defines a volume form $\omega$ on $\mathcal C$ that is $\rho_1(\Gamma)$-invariant, smooth on all vertical strips $I\times \Ss^1\setminus \Delta$ where $I$ is a connected component of $\Ss^1\setminus L_{\Gamma}$ and equal to $\omega_2$ on $L_\Gamma \times \Ss^1\cup \Ss^1\cup L_\Gamma$.\\
\indent To show that $\omega$ is continuous, assume that $(x_k,y_k)\to (x,y)$ with $x\in L_{\Gamma}$ (if $x\notin L_{\Gamma}$, then there is a connected component $I$ of $\Ss^1\setminus L_{\Gamma}$ such that $x_k\in I$ for $k$ large enough, which gives us $\omega(x_k,y_k)\to \omega(x,y)$, and the same for the derivatives of $\omega$). If $x_k\in L_{\Gamma}$ for all $k$, then $\omega(x_k,y_k)=\omega_2(x_k,y_k)$ and we already have the continuity, hence we can assume that $x_k\notin L_{\Gamma}$ for all $k$. Up to considering a finite number of subsequences, we can assume that there is $\gamma_k\in \Gamma$ such that $u_k=\rho_1(\gamma_k)(x_k) \in I_1$. By composing $\gamma_k$ with an element of the stabilizer of $I_1$, we can take $u_k$  in a compact interval $K\subset I$ (take a fundamental domain $K=\intff{a}{\rho_1(\delta)(a)}$ where $\delta$ is a generator of $\Stab(I_1)$).\\
Let $v_k=\rho_1(\gamma_k)(y_k)$. The definition of $\omega$ is:
$$\omega(x_k,y_k)=\omega(u_k,v_k)\rho_1(\gamma_k)'(x_k)\rho_1(\gamma_k)'(y_k)$$
\indent We have already seen that $\omega$ is continuous on $\overline I_1\times \Ss^1\setminus \Delta$ and $u_k\in I_1$. The problem in finding the limit of $\omega(x_k,y_k)$ is the control of the Jacobian product $\rho_1(\gamma_k)'(x_k)\rho_1(\gamma_k)'(y_k)$. However, we know that $\omega$ is continuous on $L_{\Gamma}\times \Ss^1\cup \Ss^1\times L_{\Gamma}$. We will use this fact to get rid of the derivatives: if $x'_k$ and $y'_k$ are sequences in $L_{\Gamma}$ such that $x'_k\ne y'_k$, $x'_k\ne y_k$ and $x_k\ne y'_k$, then we set $u'_k=\rho_1(\gamma_k)(x'_k)$ and $v'_k=\rho_1(\gamma_k)(y'_k)$. The equivariance equation for $\omega$ gives us:  \begin{equation} \label{equivariance} \frac{\omega(x_k,y_k)}{\omega(x_k,y'_k)}\frac{\omega(x'_k,y'_k)}{\omega(x'_k,y_k)} = \frac{\omega(u_k,v_k)}{\omega(u_k,v'_k)}\frac{\omega(u'_k,v'_k)}{\omega(u'_k,v_k)} \end{equation}  
\indent We are now looking for suitable points $x'_k$ and $y'_k$. Let $I_1=\intoo{a}{b}$, and assume that $v_k$ does not admit $a$ as a limit point (up to considering two subsequences and replacing $a$ by $b$ in the following discussion, we can always assume that it is the case), i.e. that $v_k$ lies in a compact interval $J\subset \Ss^1\setminus\{a\}$. Let $u'_k=a$ and $x'_k=\rho_1(\gamma_k^{-1})(a) \to x$. If $y_k\in L_{\Gamma}$, then we choose $y'_k=y_k$. If $y_k\notin L_{\Gamma}$, then we set $y'_k$ to be an extremal point of the connected component of $\Ss^1\setminus L_{\Gamma}$ containing $y_k$, in a way such that $v'_k=\rho_1(\gamma_k)(y'_k)\in J$.\\
\indent We now have $x'_k\to x$ and $x_k\to x$, which gives:
$$\frac{\omega(x_k,y_k)}{\omega(x_k,y'_k)}\frac{\omega(x'_k,y'_k)}{\omega(x'_k,y_k)}  \sim \frac{\omega(x_k,y_k)}{\omega(x,y'_k)}\frac{\omega(x,y'_k)}{\omega(x,y)} = \frac{\omega(x_k,y_k)}{\omega(x,y)}$$
\indent We wish to show that this quantity converges to $1$ as $k\to \infty$. The compact set  $E=\{b\}\times J \cup K\times \Ss^1\setminus I_1$ of $\mathcal C$  contains the sequences $(u_k,v_k)$,  $(u_k,v'_k)$, $(u'_k,v_k)$ and $(u'_k,v'_k)$. Consequently, the ratio \eqref{equivariance} lies in a compact set of $\intoo{0}{+\infty}$, and it is enough to see that its only possible limit is $1$. If there is a subsequence such that the ratio \eqref{equivariance} converges to $\lambda \in \intoo{0}{+\infty}$, then up to another subsequence, we can assume that the sequence $\gamma_k$ has the convergence property: there are $N,S\in \Ss^1$ such that $\rho_1(\gamma_k)(z)\to N$ for all $z\ne S$. Since $\rho_1(\gamma_k^{-1})(z)\to x$ for all $z\in I_1$, we see that $S$ in necessarily equal to $x$, hence the sequences $v_k$ and $v'_k$ converge to $N\in \Ss^1$. We get: $$ \frac{\omega(u_k,v_k)}{\omega(u_k,v'_k)}\frac{\omega(u'_k,v'_k)}{\omega(u'_k,v_k)} \to \frac{\omega(u,N)}{\omega(u,N)}\frac{\omega(a,N)}{\omega(a,N)} = 1   $$ 
\indent This shows that $\lambda =1$, therefore $\omega(x_k,y_k)\to \omega(x,y)$ and $\omega$ is continuous.
\end{proof}

\subsubsection{Differentiability}
For higher regularity of $\omega$, we will keep the same notations as in the proof of Proposition \ref{continuous} to show that we also have $\frac{\partial^{n+m}\omega}{\partial x^n\partial y^m} (x_k,y_k)\to \frac{\partial^{n+m}\omega_2}{\partial x^n\partial y^m}(x,y)$. By considering the restrictions of $\omega$ to horizontal and vertical circles, this will show that the partial derivatives of $\omega$ are well defined, and that they are continuous, which implies the smoothness of $\omega$. To simplify the calculations, we will use the notation $f_x=\frac{\partial \Log \omega}{\partial x}$ and define $f_y$, $f_{xy}$ and so on in the same way. We will make use repeatedly of an intermediate result.
\begin{lemma} \label{convergence_lemma} Let $g,h:\mathcal C\to \R$ be  functions such that:
\begin{itemize} \item The restrictions of $g$  to vertical strips $\overline I \times \Ss^1\setminus \Delta \to \R$ where $I$ is a connected component of $\Ss^1\setminus L_{\Gamma}$ are $C^1$. \item The restriction of $g$, $h$ and the derivatives of $g$ to $L_{\Gamma}\times \Ss^1\cup \Ss^1\times L_{\Gamma}$ are continuous. \end{itemize} If $h$ is a function such that $h(x_k,y_k)=g(u_k,v_k)\rho_1(\gamma_k)'(x_k) + h_k(x_k)$ for some function $h_k:\Ss^1\to \R$ and for any choice of the sequences $u_k$, $v_k$ defined above, then $h$ is continuous. \end{lemma}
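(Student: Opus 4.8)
The plan is to follow the scheme of the proof of Proposition \ref{continuous}, replacing the multiplicative four--point identity used there for $\omega$ by additive manipulations adapted to the affine form of the hypothesis on $h$. By the dichotomy of that proof, the only case not immediately settled by the continuity of $h$ on $L_\Gamma\times\Ss^1$ (when $x_k\in L_\Gamma$) or by the regularity on vertical strips (when $x$ lies in a gap) is a sequence $(x_k,y_k)\to(x,y)$ with $x\in L_\Gamma$ and $x_k\notin L_\Gamma$. So I fix such a sequence, pass to a subsequence for which $\rho_1(\gamma_k)$ has the convergence property, i.e. there are $N,S\in\Ss^1$ with $\rho_1(\gamma_k)(z)\to N$ for all $z\ne S=x$, and I keep the notation $u_k=\rho_1(\gamma_k)(x_k)\in K\subset I_1$, $v_k=\rho_1(\gamma_k)(y_k)$, with $u_k\to u$ and $v_k\to N$. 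Write $J_k=\rho_1(\gamma_k)'(x_k)$ and $g_y=\partial g/\partial y$.

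A first reduction removes the dependence on $y_k$. Since the defining relation holds for any admissible second coordinate with the same $x_k$, hence with the same $h_k(x_k)$, the difference of the relations at $y_k$ and at the fixed limit $y$ reads $$h(x_k,y_k)-h(x_k,y)=\bigl(g(u_k,v_k)-g(u_k,w_k)\bigr)J_k,\qquad w_k=\rho_1(\gamma_k)(y)\to N.$$ As $g$ is $C^1$ on the vertical strip $\overline{I_1}\times\Ss^1$ and $u_k$ stays in a compact subinterval of $I_1$ away from $N$, the Mean Value Theorem rewrites the right--hand side as $g_y(u_k,\xi_k)\,\rho_1(\gamma_k)'(\eta_k)\,(y_k-y)\,J_k$ with $\eta_k\to y$. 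Here $y_k-y\to0$, $g_y(u_k,\xi_k)$ converges, and the product $\rho_1(\gamma_k)'(\eta_k)J_k$ stays bounded, so $h(x_k,y_k)-h(x_k,y)\to0$ and it suffices to prove that the slice $x\mapsto h(x,y)$ is continuous at $x$ for a fixed $y\ne x$.

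For this slice I would compare $h(x_k,y)$ with values anchored in $L_\Gamma$. Set $\hat x_k=\rho_1(\gamma_k^{-1})(a)\in L_\Gamma$, with $a$ an endpoint of $I_1$ distinct from $N$, so that $\hat x_k\to x$, and let $y^-\in L_\Gamma$ be an endpoint of the gap containing $y$. The values $h(\hat x_k,y)$ and $h(x_k,y^-)$ converge to $h(x,y)$ and $h(x,y^-)$ by continuity of $h$ on $L_\Gamma\times\Ss^1\cup\Ss^1\times L_\Gamma$. Using that the same equivariance relation holds at every base point, in particular at the limit--set point $x$, the discrepancy $h(x_k,y)-h(x,y)$ is expressed through differences $\bigl(g(\cdot,v_k)-g(\cdot,v_k^-)\bigr)J$ evaluated at the pairs coming from $x_k$ and from $x$, where $v_k^-=\rho_1(\gamma_k)(y^-)$; applying the Mean Value Theorem in the second variable and the boundedness estimates below shows that these remainders cancel in the limit, whence $h(x_k,y)\to h(x,y)$.

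The main obstacle, and the technical heart of the argument, is the uniform control of the recurring indeterminate product of a Jacobian tending to infinity against an increment tending to zero, exactly as in the Mean Value Theorem estimates of Lemma \ref{vertical}. Two facts make it work. First, the continuity and positivity of the volume form $\omega$ constructed in Proposition \ref{continuous} force the Jacobian products $\rho_1(\gamma_k)'(x_k)\rho_1(\gamma_k)'(y_k)=\omega(x_k,y_k)/\omega(u_k,v_k)$ to converge to a finite positive limit. Second, the bounded distortion of the convergence sequence $\rho_1(\gamma_k)$ on compact sets away from the source $x$ lets one replace a Jacobian at one point by the Jacobian at a neighbouring point up to a bounded factor. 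Together they guarantee that each product of an exploding Jacobian against a vanishing increment remains bounded and that the surviving limits coincide, which yields the continuity of $h$.
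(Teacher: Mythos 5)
Your first reduction (freezing the second coordinate at $y$, and controlling $h(x_k,y_k)-h(x_k,y)$ by the Mean Value Theorem together with the identity $\rho_1(\gamma_k)'(\eta_k)\rho_1(\gamma_k)'(x_k)=\omega(x_k,\eta_k)/\omega(u_k,\rho_1(\gamma_k)(\eta_k))$) is sound, and it is essentially the same mechanism the paper uses, just organized around $y$ instead of the gap endpoint $y'_k$ of $y_k$. The problem is in the second step, where you assert that the ``remainders cancel in the limit.'' Compute them: with $y^-\in L_\Gamma$ an endpoint of the gap of $y$, the first bracket satisfies
$$h(x_k,y)-h(x_k,y^-)\;\longrightarrow\;\frac{1}{\omega}\frac{\partial g}{\partial y}(u,N)\,(y-y^-)\int_0^1\omega(x,y^t)\,dt,$$
where $u=\lim u_k$ lies in the \emph{interior} of the gap $I_1$ and depends on the approaching sequence $(x_k)$, while the quantity it must match, $h(x,y)-h(x,y^-)=\int_{y^-}^{y}\frac{\partial g}{\partial y}(x,t)\,dt=\int_{y^-}^{y}\frac{1}{\omega}\frac{\partial g}{\partial y}(x,t)\,\omega(x,t)\,dt$, involves the values of $\frac{1}{\omega}\frac{\partial g}{\partial y}$ along $\{x\}\times\intff{y^-}{y}$ with $x\in L_\Gamma$. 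Boundedness of the Jacobian products and bounded distortion, which are the two facts you invoke, only show that both quantities are finite; they give no reason for the coefficient $\frac{1}{\omega}\frac{\partial g}{\partial y}(u,N)$ to agree with the values along $\{x\}\times\intff{y^-}{y}$, so the cancellation does not follow.

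The missing ingredient is precisely the pivot of the paper's proof: since the limit formula must be independent of the admissible choices of $\gamma_k$ (which are defined up to composition with elements of $\Stab(I_1)$), and since $(y-y^-)\int_0^1\omega(x,y^t)\,dt\ne 0$, the function $\frac{1}{\omega}\frac{\partial g}{\partial y}$ is $\rho_1(\Gamma)$-invariant; being continuous on $L_\Gamma\times\Ss^1\cup\Ss^1\times L_\Gamma$ and invariant under a group acting minimally on $L_\Gamma$, it is \emph{constant} on that set (the argument of Lemma \ref{curvature_limit_set}). Only then do both limits above equal $c\,(y-y^-)\int_0^1\omega(x,y^t)\,dt$ for the same constant $c$ (note $N\in L_\Gamma$ and $(u,N)\in\Ss^1\times L_\Gamma$), which is what the paper means by ``the limit is the same for constant sequences.'' Without this invariance-and-constancy step your argument cannot close, so as written the proof has a genuine gap.
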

\begin{proof}  The Mean Value Theorem gives us $w_k\in \intff{v_k}{v'_k}$ such that:
$$h(x_k,y_k)-h(x_k,y'_k)=\rho_1(\gamma_k)'(x_k)(v_k-v'_k)\frac{\partial g}{\partial y}(u_k,w_k)$$
\indent A change of variables $s=\rho_1(\gamma_k)(t)$ allows us to compute $v_k-v'_k$, by setting $y_k^t=(1-t)y'_k+ty_k$:
$$v_k-v'_k=\int_{v'_k}^{v_k} ds = \int_{y'_k}^{y_k} \rho_1(\gamma_k)'(t)dt = (y_k-y'_k) \int_0^1\rho_1(\gamma_k)'(y_k^t)dt$$
\indent Let $v_k^t=\rho_1(\gamma_k)(y_k^t)$.
\begin{eqnarray*} h(x_k,y_k)-h(x_k,y'_k)&=&\rho_1(\gamma_k)'(x_k)(y_k-y'_k)\left( \int_0^1\rho_1(\gamma_k)'(y_k^t)dt\right) \frac{\partial g}{\partial y}(u_k,w_k)\\
&=&(y_k-y'_k)\frac{\partial g}{\partial y}(u_k,w_k) \int_0^1\frac{\omega(x_k,y_k^t)}{\omega(u_k,v_k^t)} dt \\ \end{eqnarray*}
\indent This shows that the sequence $h(x_k,y_k)$ is bounded, so all that we have to show is that it only has one limit point. Up to a subsequence, we can assume that $y'_k\to y'\in L_\Gamma$ and that $u_k\to u$.

$$ h(x_k,y_k)-h(x_k,y'_k) \to (y-y')\frac{\partial g}{\partial y}(u,N) \int_0^1\frac{\omega(x,y^t)}{\omega(u,N)} dt $$
\indent We now only have to show that the limit does not depend on $y'$ and $u$. To see this,  we first notice that since the expression is independent on the choice of $u_k$ and $v_k$ (which are defined up to composition with an element of $\Stab(I_1)$), and since $(y-y')\int_0^1\omega(x,y^t)dt\ne 0$, the function $\frac{1}{\omega} \frac{\partial g}{\partial y}$ is invariant under $\rho_1(\Gamma)$. Since it is continuous on $L_{\Gamma}\times \Ss^1\cup \Ss^1\times L_{\Gamma}$, it is constant on this set, and $N\in L_{\Gamma}$. This shows that the limit only depends on $x$, $y$ and $y'$, hence is the same for constant sequences, and it is $h(x,y)-h(x,y')$. Since $h(x_k,y'_k)\to h(x,y')$ (because $y'_k\in L_{\Gamma}$), $h$ is continuous.
\end{proof}

We achieve the proof of Theorem \ref{non_Fuchsian} by showing that $\omega$ is differentiable.

\begin{prop} \label{smooth} $\omega$ is $C^2$. \end{prop}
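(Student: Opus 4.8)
The plan is to prove that every partial derivative of $\omega$ of order at most two extends continuously across the bad set, with boundary values prescribed by the smooth form $\omega_2$. Recall from the construction that $\omega$ is already $C^\infty$ on every vertical strip $\overline I\times\Ss^1\setminus\Delta$ (with $I$ a connected component of $\Ss^1\setminus L_{\Gamma}$), that $\omega$ is continuous (Proposition \ref{continuous}), and that $\omega=\omega_2$ on $(L_{\Gamma}\times\Ss^1\cup\Ss^1\times L_{\Gamma})\setminus\Delta$ with $\omega_2$ smooth. Hence $\omega$ can only fail to be smooth at points $(x,y)$ with $x\in L_{\Gamma}$, since a point with $x\notin L_{\Gamma}$ lies inside a vertical strip. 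So it suffices to prove that for such a point and any sequence $(x_k,y_k)\to(x,y)$ with $x_k\notin L_{\Gamma}$ one has $\frac{\partial^{n+m}\omega}{\partial x^n\partial y^m}(x_k,y_k)\to\frac{\partial^{n+m}\omega_2}{\partial x^n\partial y^m}(x,y)$ for $n+m\le 2$; because $L_{\Gamma}$ has no isolated point, the values of these derivatives along the limit set are indeed those of $\omega_2$, which are continuous. Writing $L=\Log\omega$, it is enough to establish continuity of $f_x,f_y,f_{xx},f_{xy},f_{yy}$.

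First I would push the problem into a fundamental domain exactly as in Proposition \ref{continuous}: choose $\gamma_k$ with $u_k=\rho_1(\gamma_k)(x_k)$ in a compact $K\subset I_1$, set $v_k=\rho_1(\gamma_k)(y_k)$, and exploit that $\omega$ is smooth near $(u_k,v_k)$. Differentiating the equivariance relation $L(x,y)=L(\rho_1(\gamma)x,\rho_1(\gamma)y)+\Log\rho_1(\gamma)'(x)+\Log\rho_1(\gamma)'(y)$ once in $x$ gives
\[
f_x(x_k,y_k)=f_x(u_k,v_k)\,\rho_1(\gamma_k)'(x_k)+\big(\Log\rho_1(\gamma_k)'\big)'(x_k),
\]
which is precisely the shape required by Lemma \ref{convergence_lemma} (with $g=h=f_x$, the additive term depending only on $x_k$). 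Since $f_x$ is $C^1$ on vertical strips and $f_x,f_{xy}$ agree with the continuous derivatives of $\Log\omega_2$ on the limit set, Lemma \ref{convergence_lemma} yields continuity of $f_x$. The continuity of $f_y$ follows from the mirror-image argument, comparing in the $x$-variable with the roles of the two circle factors exchanged; here the single Jacobian factor is $\rho_1(\gamma_k)'(y_k)$.

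For the mixed derivative the argument is cleanest: differentiating the equivariance relation in both variables gives $f_{xy}(x,y)=f_{xy}(\rho_1(\gamma)x,\rho_1(\gamma)y)\rho_1(\gamma)'(x)\rho_1(\gamma)'(y)$, so that $f_{xy}/\omega=\tfrac12 K$, the curvature, is $\rho_1(\Gamma)$-invariant. Since $\omega$ is smooth on the strip $\overline{I_1}\times\Ss^1$, the argument of Lemma \ref{curvature_limit_set} (axes of hyperbolic elements meet, and fixed points are dense in $L_{\Gamma}$) shows that $K$ takes a single constant value on $\Ss^1\times\{N\}$ for $N\in L_{\Gamma}$; combined with continuity of $\omega$ and the identity $f_{xy}(x_k,y_k)=\frac{f_{xy}(u_k,v_k)}{\omega(u_k,v_k)}\,\omega(x_k,y_k)$ this forces $f_{xy}(x_k,y_k)\to\tfrac12 K\,\omega(x,y)=f^{\omega_2}_{xy}(x,y)$.

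The remaining cases $f_{xx}$ and $f_{yy}$ are where I expect the real work. Differentiating twice in $x$ produces
\[
f_{xx}(x_k,y_k)=f_{xx}(u_k,v_k)\,\rho_1(\gamma_k)'(x_k)^2+f_x(u_k,v_k)\,\rho_1(\gamma_k)''(x_k)+\big(\Log\rho_1(\gamma_k)'\big)''(x_k).
\]
The purely $x$-dependent term cancels under the $y$-comparison of Lemma \ref{convergence_lemma}, and in the middle term one writes $\rho_1(\gamma_k)''(x_k)=\rho_1(\gamma_k)'(x_k)\big(\Log\rho_1(\gamma_k)'\big)'(x_k)$, so that after the Mean Value Theorem in $y$ each surviving factor $\rho_1(\gamma_k)'(x_k)$ pairs with the Jacobian $\rho_1(\gamma_k)'(y_k^t)$ from the change of variables to form the bounded ratio $\omega(x_k,y_k^t)/\omega(u_k,v_k^t)$, exactly as in the proof of Lemma \ref{convergence_lemma}. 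The main obstacle is precisely the bookkeeping of these surplus Jacobian factors: the leading term carries $\rho_1(\gamma_k)'(x_k)^2$, one power more than a single comparison can absorb, so I would organise the estimate inductively, peeling off one $x$-derivative at a time and feeding the already-established continuity of $f_x$ and $f_{xy}$ (and the lower-order limits) into successive applications of the Lemma \ref{convergence_lemma} mechanism. What makes this delicate, and what rules out the naive shortcut of integrating $f_{xy}$ across $L_{\Gamma}$, is that $L_{\Gamma}$ is a measure-zero Cantor set: a continuous function whose derivative off $L_{\Gamma}$ extends continuously need not recover its increments by integration (the Cantor-function phenomenon), which is exactly why the dynamical comparison of Lemma \ref{convergence_lemma}, rather than ordinary calculus, is required. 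Once $f_{xx}$ and $f_{yy}$ (the latter by the mirror argument) are continuous, all second partials of $L$, hence of $\omega$, are continuous, $\omega$ is $C^2$, and being $\rho_1(\Gamma)$-invariant it proves Theorem \ref{non_Fuchsian}.
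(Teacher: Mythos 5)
Your overall strategy is the paper's: reduce to a fundamental domain as in Proposition \ref{continuous}, differentiate the equivariance relation for $\Log\omega$, and feed the resulting identities into Lemma \ref{convergence_lemma}; your treatments of $f_x$, $f_y$ and $f_{xy}$ (via the invariance and constancy of $\frac{1}{\omega}f_{xy}$ on the limit set, as in Lemma \ref{curvature_limit_set}) match the paper. The gap is in the one place you yourself flag as ``the real work'', namely $f_{xx}$. You correctly identify the obstacle — the equivariance relation carries $\rho_1(\gamma_k)'(x_k)^2$, one surplus Jacobian factor more than a single application of the Lemma \ref{convergence_lemma} mechanism can absorb — but your proposed fix, ``peeling off one $x$-derivative at a time'' through successive applications of that mechanism, is not a proof and does not obviously work: each application absorbs exactly one factor $\rho_1(\gamma_k)'(x_k)$ by pairing it with the $y$-Jacobian produced by the change of variables in the Mean Value Theorem, and there is only one $y$-variable to compare in, so you cannot run the same comparison twice on the same term. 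Note also that $\rho_1(\gamma_k)'(x_k)\to\infty$ here (since $\rho_1(\gamma_k)'(x_k)\rho_1(\gamma_k)'(y_k)=\omega(x_k,y_k)/\omega(u_k,v_k)$ is bounded away from $0$ and $\infty$ while $\rho_1(\gamma_k)'(y_k)\to 0$ by non-equicontinuity of the discrete group $\rho_1(\Gamma)$), so the surplus factor genuinely explodes and cannot be ignored.

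The paper's resolution is different and is the key idea you are missing: one proves continuity of the \emph{third}-order derivative $f_{xxy}$, whose equivariance relation carries $\rho_1(\gamma_k)'(x_k)^2\rho_1(\gamma_k)'(y_k)=\rho_1(\gamma_k)'(x_k)\cdot\frac{\omega(x_k,y_k)}{\omega(u_k,v_k)}$ — the extra $y$-derivative supplies exactly the Jacobian needed to cancel one of the two $x$-Jacobians, after which (setting $g=\frac{1}{\omega}f_{xxy}$, $h=\frac{1}{\omega}f_{xy}$ and using the relation for $f_x$ to eliminate $\rho_1(\gamma_k)''(x_k)/\rho_1(\gamma_k)'(x_k)$) Lemma \ref{convergence_lemma} applies. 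One then recovers $f_{xx}$ by writing $f_{xx}(x_k,y_k)=f_{xx}(x_k,y'_k)+\int_{y'_k}^{y_k}f_{xxy}(x_k,t)\,dt$ with $y'_k\in L_\Gamma$. Your objection that integration is ruled out by a Cantor-function phenomenon is misplaced here: this integral runs along the full vertical segment $\{x_k\}\times\intff{y'_k}{y_k}$, which lies in a vertical strip where $\omega$ is smooth, so the fundamental theorem of calculus applies without any issue about null sets. Finally, $f_{yy}$ is \emph{not} the mirror image of $f_{xx}$ and needs none of this machinery: in the comparison $f_{yy}(x_k,y_k)-f_{yy}(x'_k,y_k)$ the dangerous term is multiplied by $\rho_1(\gamma_k)'(y_k)^2\to 0$, so it simply vanishes. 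The asymmetry comes from the fundamental-domain reduction being performed in the $x$-variable, and your proposal does not account for it.
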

\begin{proof} If $\gamma\in \Gamma$ and $(x,y)\in \mathcal C$, then the derivative of the equivariance relation $\omega(\rho_1(\gamma)(x),\rho_1(\gamma)(y))\rho_1(\gamma)'(x)\rho_1(\gamma)'(y)=\omega(x,y)$ with respect to $x$ is:
\begin{eqnarray*} \frac{\partial \omega}{\partial x}(x,y) &= &\frac{\partial \omega}{\partial x}(\rho_1(\gamma)(x),\rho_1(\gamma)(y))\rho_1(\gamma)'(x)^2\rho_1(\gamma)'(y)\\ & &~~~~+\omega(\rho_1(\gamma)(x),\rho_1(\gamma)(y))\rho_1(\gamma)''(x)\rho_1(\gamma)'(y)\end{eqnarray*}
\indent Applied to the sequence $(x_k,y_k)$, we get:
\begin{equation} f_x(x_k,y_k)=f_x(u_k,v_k)\rho_1(\gamma_k)'(x_k) + \frac{\rho_1(\gamma_k)''(x_k)}{\rho_1(\gamma_k)'(x_k)} \label{equivariance_fx} \end{equation}

\indent Lemma \ref{convergence_lemma} show that $f_x(x_k,y_k)$ converges to $f_x(x,y)$. For $f_y$, we have:
\begin{equation} f_y(x_k,y_k)=f_y(u_k,v_k)\rho_1(\gamma_k)'(y_k) + \frac{\rho_1(\gamma_k)''(y_k)}{\rho_1(\gamma_k)'(y_k)} \label{equivariance_fy}\end{equation}
\indent Just as in Lemma \ref{convergence_lemma}, we see that $f_y(x_k,y_k)-f_y(x'_k,y_k)\to 0$ (because $x_k-x'_k\to 0$), and we now know that $\omega$ is $C^1$. Derivating once more with respect to $y$, we get:
\begin{eqnarray*} f_{yy}(x_k,y_k)-f_{yy}(x'_k,y_k) &=& \rho_1(\gamma_k)'(y_k)^2(f_{yy}(u_k,v_k)-f_{yy}(u_k,v'_k))\\ & & ~~~~~~~+3(f_y(x_k,y_k)-f_y(x'_k,y_k))\frac{\rho_1(\gamma_k)''(y_k)}{\rho_1(\gamma_k)'(y_k)} \end{eqnarray*}
\indent Since $\rho_1(\gamma_k)'(y_k)\to 0$ (if were not the case, then $\rho_1(\gamma_k)$ would be equicontinuous, which is impossible because $\rho_1(\Gamma)$ is discrete in $\Homeo(\Ss^1)$), we see that the first term tends to $0$. The equivariance formula \eqref{equivariance_fy} for $f_y$ shows that the ratio $\frac{\rho_1(\gamma_k)''(y_k)}{\rho_1(\gamma_k)'(y_k)}$ has a limit as $k\to \infty$, hence is bounded. This shows that $f_{yy}(x_k,y_k)-f_{yy}(x'_k,y_k) \to 0$, i.e. that $f_{yy}$ is continuous.\\
\indent For the crossed derivative $f_{xy}$, we use the derivative with respect to $y$ of \eqref{equivariance_fx}:
\begin{eqnarray*}  f_{xy}(x_k,y_k) &=&f_{xy}(u_k,v_k) \rho_1(\gamma_k)'(x_k)\rho_1(\gamma_k)'(y_k)\\ &=& f_{xy}(u_k,v_k) \frac{\omega(x_k,y_k)}{\omega(u_k,v_k)} \end{eqnarray*} 
\indent Since $\omega$ is continuous, we have:
$$f_{xy}(x_k,y_k)\to f_{xy}(u,N)\frac{\omega(x,y)}{\omega(u,N)}$$
\indent This limit gives the impression that it depends on $u$, however the curvature function $\frac{1}{\omega} f_{xy}$ is $\rho_1(\Gamma)$-invariant, and continuous on $L_{\Gamma}\times \Ss^1\cup \Ss^1\times L_{\Gamma}$, hence constant on this set (the proof of Lemma \ref{curvature_limit_set} can be applied) and the limit does not depend on $u$ (because $N\in L_{\Gamma}$). This shows that $f_{xy}$ is continuous. To get the convergence for $f_{xx}$, we first notice that it is sufficient to show that $f_{xxy}$ converges:
\begin{eqnarray*} f_{xx}(x_k,y_k) &= &f_{xx}(x_k,y'_k) +\int_{y'_k}^{y_k} f_{xxy}(x_k,t) dt \\ &\to & f_{xx}(x,y') +\int_{y'}^{y} f_{xxy}(x,t) dt = f_{xx}(x,y) \end{eqnarray*} 
\indent The reason why we consider $f_{xxy}$ rather than $f_{xx}$ is to get a control on the term $\rho_1(\gamma_k)'(x_k)^2$ by multiplying it with $\rho_1(\gamma_k)'(y_k)$. The equivariance formula is:
\begin{eqnarray*} f_{xxy}(x_k,y_k) 
&=& f_{xxy}(u_k,v_k) \rho_1(\gamma_k)'(x_k)^2\rho_1(\gamma_k)'(y_k) \\ & & ~~~~~~~+f_{xy}(u_k,v_k) \rho_1(\gamma_k)''(x_k) \rho_1(\gamma_k)'(y_k) 
\end{eqnarray*}
\indent If we consider $g=\frac{1}{\omega}f_{xxy}$ and $h=\frac{1}{\omega}f_{xy}$, we can simplify:
$$  g(x_k,y_k)=g(u_k,v_k) \rho_1(\gamma_k)'(x_k) + h(u_k,v_k) \frac{\rho_1(\gamma_k)''(x_k)}{\rho_1(\gamma_k)'(x_k)} $$
\indent The equivariance relation \eqref{equivariance_fx} for $f_x$ allows us to get rid of the term $\frac{\rho_1(\gamma_k)''(x_k)}{\rho_1(\gamma_k)'(x_k)}$:
$$g(x_k,y_k)=\rho_1(\gamma_k)'(x_k) \left(g(u_k,v_k) -f_x(u_k,v_k)h(u_k,v_k)\right)    +f_x(x_k,y_k) h(u_k,v_k)$$
\indent We now set $k=g-f_xh$ so that we have (by using the fact that $h$ is $\rho_1(\Gamma)$-invariant): $$g(x_k,y_k)=k(u_k,v_k)\rho_1(\gamma_k)'(x_k) + f_x(x_k,y_k)h(x_k,y_k)$$  Lemma \ref{convergence_lemma} gives the convergence of the first term, and we have already shown that $f_x$ and $h=\frac{1}{\omega}f_{xy}$ are continuous. This shows that $\omega$ is $C^2$.
\end{proof}
\indent To get a smooth $\omega$, first show that we can get $\frac{\partial^{n+m}}{\partial x^n\partial y^m} \Log \omega$ when $m>n$, then  integrate with respect to $y$ to get all derivatives.

\subsection{Constructing an example} \label{subsec:constructing_example} In order to make  Theorem \ref{non_Fuchsian} relevant, we will see that such examples of groups exist. Start with a Schottky representation $\rho_0: \mathbb F_2=\langle a,b\rangle  \to \PSL(2,\R)$ generated by two hyperbolic elements $\rho_0(a)=\gamma_1,\rho_0(b)=\gamma_2$. Consider two circle diffeomorphisms $\p_1,\p_2$ that are the identity on the limit set $L_{\rho_0(\mathbb F_2)}$, and set $\tilde \gamma_i=\p_i^{-1}\gamma_i\p_i$. We define the  representation $\rho_1: \mathbb F_2 \to \Diff(\Ss^1)$  by $\rho_1(a)=\tilde \gamma_1$ and $\rho_1(b)=\tilde \gamma_2$.

\begin{lemma} $\rho_1$ is differentially Fuchsian if and only if $\p_1=\p_2$. \end{lemma}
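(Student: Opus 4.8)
The plan is to prove the two implications separately, the reverse one being immediate and the direct one resting on the rigidity of centralizers of hyperbolic elements already established in subsection \ref{subsec:hyperbolic}.

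For the ``if'' direction, suppose $\p_1=\p_2=:\p$. Then $\rho_1(a)=\p^{-1}\gamma_1\p=\p^{-1}\rho_0(a)\p$ and $\rho_1(b)=\p^{-1}\rho_0(b)\p$, so the two homomorphisms $\rho_1$ and $\gamma\mapsto \p^{-1}\rho_0(\gamma)\p$ agree on the free generators $a,b$ of $\mathbb F_2$, hence coincide: $\rho_1=\p^{-1}\rho_0\p$. Since $\p\in\Diff(\Ss^1)$ and $\rho_0$ is Fuchsian, $\rho_1$ is differentially Fuchsian.

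For the converse, let $h\in\Diff(\Ss^1)$ satisfy $h^{-1}\rho_1(\gamma)h\in\PSL(2,\R)$ for all $\gamma$. Applying this to $a$ and $b$ and writing $\psi_i=\p_i h\in\Diff(\Ss^1)$, we get $\psi_i^{-1}\gamma_i\psi_i=h^{-1}\tilde\gamma_i h=:\delta_i\in\PSL(2,\R)$. The key claim is that each $\psi_i$ is itself Möbius. Indeed $\delta_i$ is conjugate to the hyperbolic element $\gamma_i$ by the diffeomorphism $\psi_i$, so it has the same multiplier as $\gamma_i$; two hyperbolic elements of $\PSL(2,\R)$ with equal multiplier are conjugate in $\PSL(2,\R)$, so $\delta_i=p_i^{-1}\gamma_i p_i$ for some $p_i\in\PSL(2,\R)$. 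A direct check then shows that $\psi_i p_i^{-1}\in\Diff(\Ss^1)$ commutes with $\gamma_i$, and the argument used in subsection \ref{subsec:hyperbolic} (a circle diffeomorphism commuting with a hyperbolic Möbius transformation is, in the projective chart fixing its repelling point, a homothety, because its derivative is a continuous $\lambda$-invariant function, hence constant, so it lies in $\PSL(2,\R)$) forces $\psi_i p_i^{-1}\in\PSL(2,\R)$, whence $\psi_i=\p_i h\in\PSL(2,\R)$.

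It then follows that $\p_1\p_2^{-1}=(\p_1 h)(\p_2 h)^{-1}\in\PSL(2,\R)$. On the other hand, since $\p_1$ and $\p_2$ are the identity on $L_{\rho_0(\mathbb F_2)}$, so is $\p_1\p_2^{-1}$; being a Möbius transformation fixing the infinitely many points of the Cantor set $L_{\rho_0(\mathbb F_2)}$ (in particular at least three points), it must be the identity, i.e. $\p_1=\p_2$. The main obstacle is precisely the key claim that $\psi_i$ is Möbius: everything hinges on reducing, via the $\PSL(2,\R)$-conjugacy $p_i$, to the centralizer of a single hyperbolic element, and on the differentiability of $\psi_i$ at the fixed points of $\gamma_i$, which is what makes the invariant derivative constant in the projective chart exactly as in subsection \ref{subsec:hyperbolic}. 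Note that the fixed points of $\gamma_i$ lie in $L_{\rho_0(\mathbb F_2)}$, so this is consistent with the hypothesis that $\p_i$ is the identity there.
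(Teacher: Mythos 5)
Your proof is correct and follows essentially the same route as the paper: the easy implication is identical, and the converse is reduced in both cases to the rigidity of the centralizer of a hyperbolic element of $\PSL(2,\R)$ among circle diffeomorphisms, as established in subsection \ref{subsec:hyperbolic}. If anything, your version is slightly more careful than the paper's, which normalizes the conjugacy so that $\p^{-1}\rho_1(a)\p=\rho_0(a)$ and then says ``similarly'' for $b$; your introduction of a separate $p_i\in\PSL(2,\R)$ for each generator, and your closing observation that $\p_1\p_2^{-1}$ is a M\"obius transformation fixing the Cantor set $L_{\rho_0(\mathbb F_2)}$ pointwise, tidy up exactly those two points.
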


\begin{proof} If $\p_1=\p_2$, then $\p_1$ is a differentiable conjugacy between $\rho_0$ and $\rho_1$, so $\rho_1$ is differentially Fuchsian.\\
\indent Assume that $\rho_1$ is differentially Fuchsian. Let $\p\in \Diff(\Ss^1)$ be such that $\p^{-1} \rho_1(\mathbb F_2)\p \subset \PSL(2,\R)$. Up to composing $\p$ with an element of $\PSL(2,\R)$, we can assume that $\p^{-1} \rho_1(a)\p = \rho_0(a)$. This implies that $\p_1^{-1}\circ \p$ commutes with $\gamma_1$, hence that there is $t\in \R$ such that $\p_1^{-1}\circ \p = \gamma_1^t$ (where $\gamma_1^t$ denotes the one parameter subgroup of $\PSL(2,\R)$ generated by $\gamma_1$, see \ref{subsec:hyperbolic} for a proof).  Similarly, there is $s\in \R$ such that $\p_2^{-1}\circ \p =\gamma_2^s$ (an element of the one parameter group generated by $\gamma_2$).\\
\indent The equality $\p_2\circ \gamma_2^s = \p_1\circ \gamma_1^t$ applied to the fixed points of  $\gamma_1$ and $\gamma_2$ shows that $s=t=0$, hence $\p_1=\p_2$.
\end{proof}

\begin{prop} There is $h\in \Homeo(\Ss^1)$ such that $h_{/L_{\rho_0(\mathbb F_2)}}=Id$ and $\rho_1=h\rho_0 h^{-1}$. \end{prop}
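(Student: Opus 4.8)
The first observation is that $\rho_0$ and $\rho_1$ coincide on $L:=L_{\rho_0(\mathbb F_2)}$. Indeed, since $\p_i$ fixes $L$ pointwise and $\gamma_i$ preserves $L$, for $x\in L$ one has $\tilde\gamma_i(x)=\p_i^{-1}\gamma_i\p_i(x)=\gamma_i(x)$; as $\rho_0,\rho_1$ are homomorphisms of the free group agreeing on the generators over the invariant set $L$, they agree on $L$ for every element. In particular $L$ is $\rho_1$-invariant, and minimality of the limit set gives $L_{\rho_1(\mathbb F_2)}=L$. I would therefore set $h=\mathrm{Id}$ on $L$ and only need to extend $h$ over the gaps of $\Ss^1\setminus L$ so that $h\circ\rho_0(\gamma)=\rho_1(\gamma)\circ h$ for all $\gamma$. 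Crucially, because $\mathbb F_2$ is free this equivariance is not over-determined: once $h$ is prescribed on one gap per $\Gamma$-orbit, the formula $h|_{\rho_0(w)(I)}=\rho_1(w)\circ h|_{I}\circ\rho_0(w)^{-1}$ defines it on the whole orbit, and the two generator relations then hold automatically.

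\textbf{Reduction to the cyclic stabilizers.} Since $\rho_0$ is convex cocompact, $\Ss^1\setminus L$ splits into finitely many $\Gamma$-orbits of gaps $I_1,\dots,I_n$, each with infinite cyclic stabilizer $\langle\delta_i\rangle$ generated by a primitive hyperbolic element whose two fixed points are the endpoints of $I_i$. Because $\rho_0(w)(I)=\rho_0(w')(I)$ exactly when $w^{-1}w'\in\langle\delta_i\rangle$, the equivariant extension is consistent if and only if the chosen $h|_{I_i}$ satisfies $h|_{I_i}\circ\rho_0(\delta_i)=\rho_1(\delta_i)\circ h|_{I_i}$ on $I_i$. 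Everything thus reduces to producing, on each representative gap, an orientation-preserving homeomorphism of the open interval, fixing the two endpoints, that conjugates $\rho_0(\delta_i)|_{I_i}$ to $\rho_1(\delta_i)|_{I_i}$.

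\textbf{Solving the per-gap conjugacy.} Here $\rho_0(\delta_i)|_{I_i}$ is fixed-point-free on the open interval (an arc strictly between the two fixed points of the hyperbolic $\delta_i$), hence topologically a translation. The map $\rho_1(\delta_i)$ fixes the endpoints of $I_i$, and since it agrees with $\rho_0(\delta_i)$ on $L$ while the endpoints are accumulation points of $L$, it has the \emph{same} derivatives there, so it is attracting at one endpoint and repelling at the other. Provided $\rho_1(\delta_i)|_{I_i}$ is also fixed-point-free with the matching direction, both maps are free $\Z$-actions on an interval and are conjugate by an endpoint-fixing homeomorphism, which I take for $h|_{I_i}$. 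For a gap whose stabilizer is a conjugate of a generator this is immediate: choosing the representative to be the gap fixed by $a$ (resp.\ $b$) one simply takes $h|_{I_i}=\p_1^{-1}|_{I_i}$ (resp.\ $\p_2^{-1}|_{I_i}$), since then $h|_{I_i}\gamma_i h|_{I_i}^{-1}=\tilde\gamma_i$. The main obstacle is the peripheral classes that are \emph{not} generators (namely $ab$ for the pair of pants, or $[a,b]$ for the one-holed torus): there $\rho_1(\delta_i)|_{I_i}$ is a word in the $\tilde\gamma_j$ and can a priori acquire interior fixed points, and ruling this out is exactly where one must invoke the precise (support) properties of the deformations $\p_1,\p_2$. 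This is the delicate step and the heart of the proof.

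\textbf{Conclusion: $h$ is a homeomorphism.} By construction $h$ sends each gap onto itself (the gaps $\rho_0(w)(I_i)$ and $\rho_1(w)(I_i)$ share their endpoints because $\rho_0=\rho_1$ on $L$), so $h$ is a bijection of $\Ss^1$ equal to the identity on $L$. Continuity is the only remaining point, and it is easy at $L$: given $x\in L$ and $x_k\to x$, we may assume $x_k\notin L$, lying in gaps $I(x_k)$. If infinitely many $x_k$ sit in a single gap $I_0$ abutting $x$, the homeomorphism $h|_{\overline{I_0}}$ (which fixes the endpoint $x$) yields $h(x_k)\to x$; otherwise the gaps $I(x_k)$ are eventually distinct, hence of diameter tending to $0$, and since $h(x_k)\in I(x_k)$ we again get $h(x_k)\to x$. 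Thus $h$ is a continuous bijection of the compact circle, i.e.\ a homeomorphism, it is the identity on $L$, and $\rho_1=h\rho_0 h^{-1}$ holds by the built-in equivariance.
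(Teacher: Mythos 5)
You follow exactly the paper's strategy: set $h=\mathrm{Id}$ on $L=L_{\rho_0(\mathbb F_2)}$, choose $h$ on one gap per orbit so as to intertwine the cyclic stabilizers, propagate equivariantly, and check continuity at $L$. Your treatment of the consistency of the propagation and of the continuity is in fact more detailed than the paper's, which disposes of both in one sentence. But the step you explicitly leave open --- producing, on a representative gap $I$ with stabilizer generated by $\delta$, an endpoint-fixing homeomorphism of $\overline I$ conjugating $\rho_0(\delta)|_I$ to $\rho_1(\delta)|_I$ --- is precisely the only nontrivial point, and the paper's proof assumes it without comment (``set $h_{/I_i}$ any homeomorphism that fixes the endpoints \dots such that $h_{/I_i}\circ \rho_0(\delta)=\rho_1(\delta)\circ h_{/I_i}$''). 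You correctly reduce it to showing that $\rho_1(\delta)$ has no fixed point in the interior of $I$ and translates in the same direction as $\rho_0(\delta)$, and you correctly note that this is clear when $\delta$ is conjugate to a generator but not for the remaining peripheral classes. Declaring that step ``the heart of the proof'' without carrying it out leaves the proposal incomplete.

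Moreover, the obstruction you identified is genuine, not merely technical: for arbitrary $\p_1,\p_2$ equal to the identity on $L$ the statement is actually false. Take a Schottky configuration in which $ab$ is peripheral, let $I$ be the gap it stabilizes and $J=\gamma_2(I)\ne I$; choose $\p_2=\mathrm{Id}$ and $\p_1$ supported in $J$ (so both are the identity on $L$). For $x\in I$ one computes $\rho_1(ab)(x)=\gamma_1\circ\p_1\circ\gamma_2(x)$, so $\rho_1(ab)$ fixes $\gamma_2^{-1}(y)$ exactly when $\p_1(y)=(\gamma_2\gamma_1)^{-1}(y)$. Since $\gamma_2\gamma_1$ is hyperbolic with fixed points $\partial J$, the map $(\gamma_2\gamma_1)^{-1}$ sends every interior point of $J$ to another interior point, so one can choose $\p_1|_{\overline J}$ (infinitely tangent to the identity at $\partial J$, hence extending to a smooth circle diffeomorphism fixing $L$ pointwise) realizing this equality at some interior $y$. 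Then $\rho_1(ab)$ has at least three fixed points while $\rho_0(ab)$ has exactly two, and no conjugating $h$ can exist. So the proposition requires a hypothesis on the deformations --- smallness, a restriction on their supports, or directly the requirement that each peripheral element act without interior fixed point on the gap it stabilizes --- and a complete proof must state that hypothesis and use it exactly at the step where you stopped; the same remark applies to the gap stabilized by $[a,b]$ in the one-holed torus configuration, where no gap is stabilized by a generator at all.
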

\begin{proof} Let $\Ss^1\setminus L_{\rho_0(\mathbb F_2)}=\bigcup_{i\in \N}I_i$ its decomposition into connected components, and let $A\subset \N$ be a fundamental domain for the action of $\mathbb F_2$ on the set of connected components of $\Ss^1\setminus L_{\rho_0(\mathbb F_2)}$. Given $i\in A$,  set $h_{/I_i}$ any homeomorphism that fixes the endpoints of $I_i$ such that $h_{/I_i}\circ \rho_0(\delta)=\rho_1(\delta)\circ h_{/I_i}$ for $\delta$ in the stabilizer of $I_i$. For $\gamma \in \mathbb F_2$, set $h=\rho_1(\gamma) \circ h_{/I_i}\circ \rho_0(\gamma^{-1})$ on $\rho_0(\gamma) (I_i) = \rho_1(\gamma) (I_i)$. This defines an element $h\in \Homeo(\Ss^1)$ that fixes all points of $L_{\rho_0(\mathbb F_2)}$ such that $h^{-1}\rho_1 h = \rho_0$.  \end{proof}
Note that we proved here that $\rho_1(\mathbb F_2)$ remains a free group, which is a general fact for a $C^1$ perturbation of a Schottky group (see \cite{Sullivan}).

\section{Spectrally Möbius-like deformations} \label{sec:spectrally} In the proof of Theorem \ref{non_Fuchsian},  we used the fact that the conjugacy  is the identity on the limit set for two purposes: in order to find an invariant volume form on $L_\Gamma\times\Ss^1\cup\Ss^1\times L_\Gamma \setminus\Delta$, and in order to show that $\Omega_\psi$ is a hyperbolic set. In the case of spectrally Möbius-like actions, we only have an invariant volume form on pairs of fixed points of elements of $\Gamma$.\\
\indent In the context of the flow $\psi$, this means that we need to find an invariant volume form on $\Omega_\psi$, starting with a data on periodic orbits. This is exactly the context of  Livšic's Theorem. However, we still need hyperbolicity for the flow $\psi$, which is why we only prove Theorem \ref{spectrally} for small perturbations of Fuchsian groups.\\
\indent Given a representation $\rho_0:\Gamma \to \Diff(\Ss^1)$ of a finitely generated group $\Gamma$, we say that $\rho : \Gamma \to \Diff(\Ss^1)$ is $C^1$-close to $\rho_0$ if the images under $\rho$ of a system of generators of $\Gamma$ are close to   the images under $\rho_0$ in the $C^1$ topology.\\

\noindent \textbf{Theorem \ref{spectrally}.} \emph{Let $\rho_0: \mathbb F_n \to \PSL(2,\R)$ be  a convex cocompact representation. If $\rho_1: \mathbb F_n \to \Diff(\Ss^1)$ is sufficiently $C^1$-close to $\rho_0$, and if $\rho_1$ is spectrally Möbius-like, then $\rho_1$ is area-preserving. }

\begin{proof} The central argument is the fact that the flow $\psi$ associated to $\rho_1$ is $C^1$-close to the geodesic flow $\p$. Since hyperbolicity is  stable  under $C^1$ perturbations, it will imply that $\Omega_\psi$ is a hyperbolic set for $\psi$.\\
\indent In the definitions of these flows, they seem to be defined on different manifolds. We will start by giving a slightly different construction so that they live on the same manifold.\\
\indent Consider a path $\rho_u : \mathbb F_n \to \Diff(\Ss^1)$ for $u\in [0,1]$ defined as convex combinations of $\rho_0$ and $\rho_1$ (we chose free groups so that such a path can be easily defined). Recall the definition of $\Sigma_3$:
$$\Sigma_3= \{(x_-,x_0,x_+)\in (\Ss^1)^3 \vert x_-<x_0<x_+<x_-\}$$
\indent We can define an action of $\Gamma$ on $\Sigma_3\times \intff{0}{1}$ by:
$$ \gamma . (x_-,x_0,x_+,u) = (\rho_u(\gamma)(x_-),\rho_u(\gamma)(x_0),\rho_u(\gamma)(x_+),u)$$
\indent This action preserves the slices $\Sigma_3\times\{u\}$, which gives a map on the quotient $\pi : \Sigma_3 \times \intff{0}{1} / \Gamma  \to \intff{0}{1}$ which is a submersion. Each fiber $\pi^{-1}(\{u\})$ is diffeomorphic to the manifold $N_u$ on which the flow $\psi_u^t$ associated with the representation $\rho_u$  is defined.\\
\indent If $U\subset \Sigma_3$ is a relatively compact neighbourhood of $(L_{\rho_0(\Gamma)}\times \Ss^1\times L_{\rho_0(\Gamma)})\cap \Sigma_3$, then the restriction of $\pi$ to $U\times \intff{0}{1}$ is a proper submersion onto $\intff{0}{1}$, hence   a trivial fibration, i.e. there is a diffeomorphism $\Phi : U\times \intff{0}{1} / \Gamma \to N\times \intff{0}{1}$ such that projection on the second factor is equal to $\pi$. This shows that the flows $\psi_u$ (restricted to a  neighbourhood of the non wandering set) can be considered as flows on the manifold $N$, that vary continuously with $u$ in the $C^1$ topology. Therefore, if $\rho_1$ is sufficiently close to $\rho_0$, then $\Omega_{\psi_1}$ is a hyperbolic set for $\psi_1$.\\
\indent We will now use the notation $\psi$ for the flow associated to $\rho_1$, and $\alpha_1$ for the diagonal action of $\Gamma$ on $\Sigma_3$ (note that it is not exactly the same flow as defined in the proof of Theorem \ref{non_Fuchsian}, where we kept the action $\rho_0$ on the middle factor of $\Sigma_3$ so that the conjugacy with the geodesic flow would be differentiable along all the non wandering set).\\
\indent Given a volume $\omega_0$ on $N$, we set $\psi^{t*} \omega_0 = e^{-A(t,x)}\omega_0$. 
To find a volume $\omega_1=e^\sigma \omega_0$ that is invariant under $\psi$ at points of $\Omega_\psi$, we have to solve the equation $\sigma(\psi^t(x))-\sigma(x)=A(t,x)$ for all $x\in \Omega_\psi$. A necessary condition on the cocycle $A$ is that $A(T,x) = 0$ whenever $\psi^T(x)=x$. Livšic's Theorem states that this condition is sufficient.\\
\indent Let us show  that $A(T,x) = 0$ for periodic orbits $\psi^T(x)=x$. Since $A(T,x)=-\textrm{Log} \det(D\psi^T_x)$, we have to show that  the Jacobian  $\det(D\psi^T_x)$ is equal to $1$.\\
\indent To compute this Jacobian, we consider the lift $\tilde \psi^t$ to $\Sigma_3$, and $p:\Sigma_3\to \Sigma_3 /\Gamma$ the covering map. Since the flow $\tilde \psi^t$ is a reparametrization of the vector field $(0,1,0)$, it can be written:
$$\tilde \psi^t(x_-,x_0,x_+)=(x_-,f(t,x_-,x_0,x_+),x_+)$$
\indent If $\psi^T(x)=x$, then a lift $\tilde x=(x_-,x_0,x_+) \in p^{-1}(\{x\})$  satisfies $\tilde \psi^T(\tilde x)=\alpha_1(\gamma)( \tilde x)$ for some $\gamma \in \Gamma$. For all $y\in \Ss^1$ such that $(x_-,y,x_+)\in \Sigma_3$, we get $\tilde \psi^T(x_-,y,x_+)=(x_-,\rho_1(\gamma)(y),x_+)$, which shows that the matrix of $D\tilde \psi^T_{\tilde x}$ has the form:
\[ \left( \begin{array}{ccc}
1 & * & 0 \\
0 & \rho_1(\gamma)'(x_0) & 0\\
0 & * & 1 \end{array} \right)\]

\indent Consequently, its determinant is $\rho_1(\gamma)'(x_0)$. The derivative $D\psi^T_x$ is similar to $(D\alpha_1(\gamma)_{\tilde x})^{-1} D\tilde \psi^T_{\tilde x}$. The matrix of $D\alpha_1(\gamma)_{\tilde x}$ is the diagonal matrix:
\[ \left( \begin{array}{ccc}
\rho_1(\gamma)'(x_-) & 0 & 0 \\
0 & \rho_1(\gamma)'(x_0) & 0\\
0 & 0 & \rho_1(\gamma)'(x_+) \end{array} \right)\]

Since the action $\rho_1$ is spectrally Möbius-like and $x_-$ and $x_+$ are fixed points of $\rho_1(\gamma)$, we have $\rho_1(\gamma)'(x_-)\rho_1(\gamma)'(x_+)=1$, hence $\det(D\alpha_1(\gamma)_{\tilde x})=\rho_1(\gamma)'(x_0)$, and $\det(D\psi^T_x)   =1$.\\
\indent In order to apply Livšic's Theorem, one has to be precise on the exact setting, as well as on the required regularity. The first result, proved by Livšic in \cite{livsic}, concerns transitive Anosov flows, and deals with Hölder solutions. Smooth solutions for transitive Anosov flows are given  in \cite{LMM86}. Concerning compact topologically transitive hyperbolic sets, the existence of a Hölder-continuous  and even $C^1$ solutions can be found in \cite{KH} (Theorem 19.2.4 and Theorem 19.2.5). The main difficulty appears while studying  crossed derivatives for $C^2$ regularity. For smoothness outside of the Anosov setting (i.e. when the hyperbolic set is not the whole manifold), the only result concerns diffeomorphisms of surfaces in \cite{NT}. However, flows on three-manifolds are analogous to diffeomorphisms on surfaces.\\
\indent Lemma 3.3 of \cite{NT} states that there is a continuous solution $\sigma$ that is differentiable in restriction to stable and unstable leaves (\cite{NT} deals with diffeomorphisms of surfaces, but the same proof, up to replacing discrete sums by integrals, works for flows on three-manifolds). Going back to the cylinder $\mathcal C$, we get a function that is (uniformly) differentiable in restriction to leaves $\{x\} \times \Ss^1$ and $\Ss^1\times \{y\}$ for $x,y\in L_{\rho_1(\Gamma)}$. Theorem 1.5 of \cite{NT} implies that this solution is smooth on $\Ss^1\times L_{\rho_1(\Gamma)}\cup L_{\rho_1(\Gamma)}\times \Ss^1$ in the Whitney sense (i.e. that it is the restriction of a smooth function on $\mathcal C$).\\
\indent From there, Lemma \ref{vertical}, Proposition \ref{continuous} and Proposition \ref{smooth} show that $\rho_1$ is area-preserving.
\end{proof}

\subsection*{Acknowledgments} This work corresponds to Chapter 4 of my PhD thesis \cite{these}. I would like to thank my advisor Abdelghani Zeghib for his help throughout this work.

~\\
\footnotesize \textsc{UMPA, \'Ecole Normale Supérieure de Lyon, 46 allée d'Italie, 69364 Lyon Cedex 07,  France}\\
 \emph{E-mail address:}  \verb|daniel.monclair@ens-lyon.fr|


\begin{thebibliography}{2}

\bibitem[Ahl64]{ahlfors} L.V. Ahlfors: \emph{Finitely generated Kleinian groups}, Amer. J. Math., \textbf{86} (1964), p. 413-429

\bibitem[Amo79]{Amores} A.M.~Amores: \emph{Vector fields of a finite type G-structure}, J. Differential Geometry, \textbf{14} (1979), p. 1-6

\bibitem[Bea83]{beardon} A.F. Beardon: \emph{The geometry of discrete groups}, Springer Verlag, New York, 1983

\bibitem[Ber65]{bers} L. Bers: \emph{Automorphic forms and Poincaré series for infinitely generated Fuchsian groups}, Amer. J. Math., \textbf{87} (1965), p. 196-214

\bibitem[Bow99]{bowditch} B. H. Bowditch: \emph{Convergence groups and configuration spaces},  Geometric group theory down under (Canberra, 1996), de Gruyter, Berlin, 1999, p. 23–54

\bibitem[CJ94]{CJ} A. Casson, D. Jungreis: \emph{Convergence groups and Seifert fibered $3$-manifolds}, Invent. Math., 118 (1994), no. 3, p.441-456

\bibitem[CR94]{Carriere_Rozoy} Y.~Carrière, L.~Rozoy: \emph{Complétude des métriques lorentziennes de $\T^2$ et difféomorphismes du cercle}, Bol. Soc. Bras. Mat., \textbf{25} (1994), no. 2, p. 223-235

\bibitem[DKN09]{DKN09}  B. Deroin, V. Kleptsyn, A. Navas: \emph{On the question of ergodicity for minimal group actions on the circle}, Moscow Mathematical Journal, \textbf{9} (2009), no. 2, p. 263-303 

\bibitem[DKN13]{DKN13} B. Deroin, V. Kleptsyn, A. Navas: \emph{Towards the solution of some fundamental questions concerning group actions on the circle and codimension-one foliations}, preprint, 2013, arXiv:1312.4133

\bibitem[DO00]{DO} C. Duval, V. Ovsienko: \emph{Lorentzian worldlines and the Schwarzian derivative}, Functional Analysis and its applications, \textbf{34} (2000), no. 2, p. 135-137

\bibitem[Gab92]{Gabai} D.~Gabai: \emph{Convergence groups are Fuchsian groups}, Ann. of Math. \textbf{136} (1992), p. 447-510

\bibitem[Ghy87]{Gh87} E.~Ghys: \emph{Flots d'Anosov dont les feuilletages stables sont différentiables}, Ann. Scient. Ec. Norm. Sup.,  (4) \textbf{20} (1987), no. 2, p. 251-270

\bibitem[Ghy92]{Gh92} E.~Ghys: \emph{Déformations de flots d'Anosov et de groupes fuchsiens}, Ann.  Inst. Fourier, \textbf{42} (1992), p. 209-247

\bibitem[Ghy93]{Gh93} E.~Ghys: \emph{Rigidité différentiable des groupes fuchsiens}, Publ. Math. de l'I.H.E.S., \textbf{78} (1993), p. 163-185

\bibitem[Ghy01]{Gh01} E.~Ghys: \emph{Groups acting on the circle}, Enseign. Math., (2) \textbf{47} (2001), no. 3-4, p. 329-407

\bibitem[Her79]{Herman} M.R.~Herman: \emph{Sur la conjugaison différentiable des difféomorphismes du cercle à des rotations}, Publ. Math. de l'I.H.E.S., \textbf{49} (1979), p. 5-234

\bibitem[HP69]{HP} M. Hirsh, C. Pugh : \emph{Stable manifolds for hyperbolic sets}, Bull. Amer. Math. Soc., \textbf{75}, no. 1 (1969), p. 149-152

\bibitem[KH95]{KH} A. Katok, B.  Hasselblatt: \emph{Introduction to the Modern Theory of Dynamical Systems}, Encyclopedia of Mathematics and its Applications, \textbf{54}, Cambridge University Press, 1995

\bibitem[dlL92]{dlL} R. de la Llave, \emph{Smooth conjugacy and S-R-B measures for uniformly and non-uniformly hyperbolic systems}, Comm. Math. Phys., \textbf{150} (1992), no. 2, p. 289-320

\bibitem[Liv71]{livsic} A. N. Livšic: \emph{Homology properties of U systems}, Math. Notes, \textbf{10} (1971), p. 758-763

\bibitem[LMM86]{LMM86} R. de la Llave, J. M. Marco, R. Moriy\'on: \emph{Canonical perturbation theory of Anosov systems and regularity results for the Livšic cohomology equation}, Ann. of Math., (2) \textbf{123} (1986), no. 3, p. 537–611

\bibitem[LMM88]{LMM88} R. de la Llave, J. M. Marco, R. Moriy\'on: \emph{Invariants for smooth conjugacy of hyperbolic dynamical systems}, I-IV., Comm. Math. Phys., \textbf{109}, \textbf{112}, \textbf{116} (1987, 1988)

\bibitem[Mon14a]{lorentz_surfaces} D. Monclair: \emph{Isometries of Lorentz surfaces and convergence groups}, arXiv:1402.7179 (2014)

\bibitem[Mon14b]{these} D. Monclair, \emph{Dynamique lorentzienne et groupes de difféomorphismes du cercle}, PhD Thesis, 2014

\bibitem[Nav02]{Navas} A.~Navas: \emph{Actions de groupes de Kazhdan sur le cercle},  Ann. Scient. Ec. Norm. Sup.,  \textbf{35} (2002), p. 749-758

\bibitem[Nav06]{Navas_red} A. Navas: \emph{Reduction of cocycles and groups of diffeomorphisms of the circle}, Bull. Belg. Math. Soc., \textbf{13} (2006), p. 193-205

\bibitem[NT07]{NT} M. Nicol, A. Török: \emph{Whitney regularity for solutions to the coboundary equation on Cantor sets},  Math. Phys. Electron. J., \textbf{13} (2007), Paper 6


\bibitem[Sul85]{Sullivan} D.~Sullivan: \emph{Quasiconformal homeomorphisms and dynamics II: Structural stability implies hyperbolicity for Kleinian groups}, Acta Math., \textbf{155} (1985), p. 243-260

\bibitem[Yoc84]{Y84} J.C. Yoccoz: \emph{Conjugaison différentiable des difféomorphismes du cercle dont le nombre de rotation vérifie une condition diophantienne}, Ann. Scient. Ec. Norm. Sup., \textbf{17} (1984), p. 333-359

\bibitem[Yoc95]{Y95} J.C.~Yoccoz: \emph{Petits diviseurs en dimension $1$}, Astérisque, \textbf{231} (1995)

\end{thebibliography}
\end{document}